\newcommand{\mc}{\mathcal}
\newcommand{\mr}[1]{{\mathrm{#1}}}
\newcommand{\ul}{\underline}
\newcommand{\ds}{\displaystyle}
\newcommand{\punto}{{\,\cdot\,}}
\newcommand{\floor}[1]{\lfloor #1 \rfloor}
\newcommand{\ceil}[1]{\lceil #1 \rceil}
\renewcommand{\Re}{\mathrm{Re}\,}
\newcommand{\dd}{\mathrm d}
\newcommand{\R}{\mathbb R}
\newcommand{\C}{\mathbb C}
\newtheorem{proposition}{Proposition}[section]
\newtheorem{corollary}[proposition]{Corollary}
\newtheorem{lemma}[proposition]{Lemma}
\newtheorem{theorem}[proposition]{Theorem}
\newtheorem*{hypothesis*}{Hypothesis}
\numberwithin{equation}{section}
\title{Brushing up a theorem by Lehel Banjai \\ on the convergence of \\
Trapezoidal Rule Convolution Quadrature\footnote{Both authors are partially supported by NSF grant DMS 1818867}}
\author{Hasan Eruslu\footnote{Department of Mathematical Sciences, University of Delaware. {\tt heruslu@udel.edu}}, Francisco-Javier Sayas\footnote{Department of Mathematical Sciences, University of Delaware. {\tt fjsayas@udel.edu}} \\
University of Delaware}
\date{\today}
\begin{document}

\maketitle

\begin{abstract}
This document is made up of two different units. One of them is a regular terse research article, whereas the other one is the detailed and independently written explanations for the paper, so that readers of the short paper do not need to go over all the cumbersome computations. The goal is to clarify the dependence with respect to the time variable of some estimates about the convergence of the Trapezoidal Rule based Convolution Quadrature method applied to hyperbolic problems. This requires a careful investigation of the article of Lehel Banjai where the first convergence estimates were introduced, and of some technical results from a classical paper of Christian Lubich.\\
{\bf Key words.} Convolution quadrature, Laplace transforms, Trapezoidal rule \\
{\bf AMS Classification.}  26D10, 30A10, 44A10, 65L06
\end{abstract}

\begin{framed}
\begin{center}
The actual article
\end{center}
\end{framed}

\section{Introduction}

Without much practical motivation, let us explain what the goals of this article are. We first need to give a short introduction to the Convolution Quadrature (CQ) techniques to approximate causal distributional convolutions by the systematic use of the symbol (Laplace transform) of the operator. We will be interested in particular in the Trapezoidal Rule (TR) based CQ method. The reason for this is multiple. The original work of Lubich \cite{Lubich1994} extends his results \cite{Lubich1988} on multistep CQ for parabolic problems (parabolic character is reflected in having the Laplace transform of the operator extended to a sector around the negative axis) to hyperbolic problems (where the Laplace transform is defined only on a half plane). Because of Dahlquist barrier, only second order multistep CQ methods are available for hyperbolic problems, and the analysis in \cite{Lubich1994} excludes the TRCQ method for technical reasons. However, it is well known (and it has been tested repeatedly in the world of Time Domain Boundary Integral Equations -- TDBIE) that the TR based method outperforms the first order backward Euler method and BDF2 which is much too dispersive. Note that Runge-Kutta CQ schemes \cite{LuOs1993} with higher order and less dispersion are also available, and that a detailed time domain analysis is also missing from \cite{BaLu2011} and \cite{BaLuMe2011}.

As a warning to the reader, let us say that this paper is quite technical, but it closes an important question (left open in 
the monograph \cite{Sayas2016}) as how error estimates for TRCQ behave polynomially in time and there is no hidden Gronwall Lemma argument that would lead to exponential in time upper bounds. In the appendix of Banjai's paper \cite{Banjai2010}, which we are polishing up, the estimates are written for finite time intervals and the behavior with respect to the final time is not specified.

Let us now give a crash course on the mathematical aspects of TRCQ. For algorithmic and practical introductions to the CQ methods, we recommend \cite{BaSc2012} and \cite{HaSa2016}. For a detailed introduction to the distributional language required for a deep understanding of CQ applied to TDBIE, see \cite{Sayas2016}. Our starting point is a couple of Banach spaces $X$ and $Y$ and the space $\mathcal B(X,Y)$ of bounded linear operators $X\to Y$, whose norm will be denoted $\|\,\cdot\,\|_{X\to Y}$. The second ingredient is the symbol of a momentarily hidden convolutional operator: we assume that we have an analytic function
\begin{equation}\label{eq:pap:1.1}
\mathrm F:\C_+\to \mathcal B(X,Y), \qquad \C_+:=\{s\in \C\,:\, \Re\,s >0\}
\end{equation}
satisfying
\begin{equation}\label{eq:pap:1.2}
\| \mathrm F(s)\|_{X\to Y}\le C_F(\Re s)\,|s|^\mu \qquad \forall s\in \C_+
\end{equation}
where $C_F:(0,\infty)\to (0,\infty)$ is non-increasing and $C_F(x)\le c_0 \, x^{-m}$ for some $m\ge 0$ when $x$ is close to zero. We will be interested in symbols $\mathrm F$ where the parameter $\mu\ge 0$ in \eqref{eq:pap:1.2}, but we will show some results (based on \cite{Lubich1994}), where we will use negative values of $\mu$ as well. The TRCQ approximation of this symbol consists of defining
\begin{equation}\label{eq:pap:1.3}
\mathrm F_\kappa (s):=\mathrm F(s_\kappa), 
\qquad s_\kappa:=\frac{\delta(e^{-\kappa s})}\kappa=\frac2\kappa \tanh\left( \frac{\kappa s}2\right),
\qquad \delta(\zeta):=2\, \frac{1-\zeta}{1+\zeta}.
\end{equation}
Here $\kappa>0$ is the constant time-step (see more explanations later) of the underlying TR scheme (recall that $\delta$ in \eqref{eq:pap:1.3} is the characteristic function of the TR scheme). We will show that $s_\kappa\in \C_+$ for every $s\in \C_+$, so that the definition of $\mathrm F_\kappa$ makes sense, and we will also show that $\mathrm F_\kappa$ is a symbol with properties \eqref{eq:pap:1.1}-\eqref{eq:pap:1.2}, although with different parameters to those of $\mathrm F$.

Properties \eqref{eq:pap:1.1}-\eqref{eq:pap:1.2} ensure that $\mathrm F$ is the Laplace transform of a causal tempered $\mathcal B(X,Y)$-valued distribution, which we will name $f$. Moreover, $\mathrm F$ is the Laplace transform of the distributional time derivative of a certain order (depending on $\mu$) applied to a function $h:\R \to \mathcal B(X,Y)$ which is causal ($h\equiv 0$ in $(-\infty,0)$), continuous, and polynomialy bounded. See full details in \cite[Chapters 2 \& 3]{Sayas2016}. Under these conditions, we can define a convolutional product $f*g$ of the operator valued $f$ acting on a causal $X$-valued distribution $g$, outputting a causal $Y$-valued distribution. Informally, we are dealing with 
\[
(f*g)(t)=\int_0^t f(t-\tau) g(\tau)\mathrm d\tau.
\]
Similary, $\mathrm F_\kappa$ is the Laplace transform of a causal tempered $\mathcal B(X,Y)$-valued distribution $f_\kappa$, and the TRCQ approximation consists of substituting $f*g$ by $f_\kappa*g$. In practice, what is computed are the values
\begin{equation}
(f_\kappa*g)(t_n) \qquad t_n:=n\,\kappa, \qquad n\in \mathbb Z, \quad  n\ge 0,
\end{equation}
although the theory is developed for the full real distribution $f_\kappa*g$. The time-step values of $f_\kappa*g$ are given by the discrete convolution
\begin{equation}\label{eq:pap:1.5}
(f_\kappa*g)(t_n):=\sum_{m=0}^n \omega_{n-m}^F(\kappa) g(t_m),
\qquad
\mathrm F(\delta(\zeta)/\kappa)=\sum_{m=0}^\infty \omega_m^F(\kappa) \zeta^m.  
\end{equation}
In practice, the discrete convolutions \eqref{eq:pap:1.5} are computed using a parallel process, FFTs, and some kind of contour integration \cite{BaSa2008, BaSc2012,HaSa2016}. If applied to a linear system of ODEs with vanishing initial conditions, TRCQ is reduced to the TR scheme applied to the original system. One of the main field of applications of CQ for hyperbolic problems is in the area of TDBIE, using the language and ideas of the seminal papers of Bamberger and Ha-Duong \cite{BaDu1986a, BaDu1986b}. More examples, including coupled systems of wave equations in bounded domains with TDBIE in their exterior, can be found in \cite{LaSa2009}. More recently, a rich domain of applications of CQ has been opened in the numerical approximation of propagation of viscoelastic waves \cite{BrDuErSa2018}.

\section{The main theorem}

To state the main theorem, we will use the Sobolev-Bochner space
\[
W_+^n(\R;X):=\{g\in \mathcal C^{n-1}(\R;X) \,:\,g\equiv 0 \mbox{ in $(-\infty,0)$},\quad g^{(n)}\in L^1(\R;X)\}. 
\]
Note that if $g\in W_+^n(\R;X)$, then $g^{(m)}(0)=0$ for $m\le n-1.$ We will also need the $m$-th order linear differential operator
\[
(\mathcal P_m g)(t):=e^{-t}(e^{\,\cdot\,}g)^{(m)}(t)=\sum_{\ell=0}^m {m \choose \ell} g^{(\ell)}(t). 
\]
The remainder of this paper will consist of a proof of the next theorem. Some easy (but somewhat cumbersome details) are avoided. The reader is welcome to look for the arXiv version of this document to find a fastidiously detailed proof of each single step. 

\begin{theorem} \label{thm:pap:2.1}
Let $\mathrm F$ satisfy \eqref{eq:pap:1.1}-\eqref{eq:pap:1.2} for $\mu\ge 0$ and with $C_F$ fulfilling the conditions given after \eqref{eq:pap:1.2}. Let $f$ be the distributional inverse Laplace transform of $\mathrm F$ and $f_\kappa$ be its TRCQ approximation, i.e., the inverse Laplace transform of $\mathrm F_\kappa$ given in \eqref{eq:pap:1.3}, for any given time-step $\kappa\in (0,1]$. Consider the parameters
\begin{equation}\label{eq:pap:2.1}
m:=\lceil \mu \rceil, 
	\qquad \alpha:=\lfloor \mu-m\rfloor +5, 
	\qquad \beta:=\max\{ 2m+4,m+\alpha\}.
\end{equation}
For any  $g\in W_+^\beta(\R;X)$ and $t\ge 0$,we have
\begin{equation}\label{eq:pap:2.2}
\| (f_\kappa-f)*g(t)\|_Y
	\le C(t^{-1})\left(
		\int_0^t \| g^{(m+\alpha)}(\tau)\|_X\mathrm d\tau
			+\int_0^t \| \mathcal P_m g^{(m+4)}(\tau)\|_X \mathrm d\tau\right),
\end{equation}
where
\[
C(x):=C_F \left (\min\{x,1\}/4\right) \frac{C_\mu}{\min\{x^\varepsilon, 1\}},
	\qquad \varepsilon:=\max\{2m-\mu + 1, \floor{\mu}-\mu+3\}.
\]
\end{theorem}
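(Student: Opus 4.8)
The plan is to work entirely on the Laplace-transform side and then invert using the standard CQ inversion machinery. The error operator has symbol $\mathrm E_\kappa(s) = \mathrm F_\kappa(s) - \mathrm F(s) = \mathrm F(s_\kappa) - \mathrm F(s)$, and the heart of the matter is a pointwise bound on $\mathrm E_\kappa(s)$ on the vertical line $\Re s = \sigma$ that (i) is $O(\kappa^2)$ as $\kappa \to 0$, (ii) grows only polynomially in $|s|$, and (iii) has explicit, controlled dependence on $\sigma$ through $C_F(\cdot)$. The key geometric fact to establish first is that $s_\kappa = \tfrac2\kappa\tanh(\kappa s/2)$ maps $\C_+$ into $\C_+$, together with quantitative estimates $\Re s_\kappa \geq c\,\Re s$ (for $\Re s$ bounded, say $\kappa\Re s \le 1$) and $|s_\kappa| \le C|s|$, plus the crucial consistency estimate $|s_\kappa - s| \le C\kappa^2 |s|^3$ (or $|s_\kappa - s| \lesssim \kappa^2|s|\,|s_\kappa|^2$), reflecting that TR is second-order and that $\delta(\zeta)$ agrees with $-\log\zeta$ to second order. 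These follow from elementary but careful estimates on $\tanh$ in the right half-plane, most delicately near the imaginary axis where $\tanh$ has poles at $s = i\pi(2k+1)/\kappa$; one restricts to a strip $|\kappa\,\Im s| \le$ const or uses that on $\Re s = \sigma > 0$ one stays away from the poles.

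**Next I would** convert the consistency bound into a bound on $\mathrm E_\kappa$. Writing $\mathrm F(s_\kappa) - \mathrm F(s) = \int_0^1 \mathrm F'(s + \theta(s_\kappa - s))(s_\kappa - s)\,d\theta$ along the segment (which stays in $\C_+$ by the convexity/mapping properties above), and bounding $\mathrm F'$ via a Cauchy estimate from \eqref{eq:pap:1.2} on a disk of radius $\sim \Re s$, one gets
\[
\|\mathrm E_\kappa(s)\|_{X\to Y} \le C_F(c\,\Re s)\,\frac{|s|^{\mu+1}}{\Re s}\,|s_\kappa - s| \le C_F(c\,\Re s)\,\kappa^2\,|s|^{\mu+4}\,\frac{1}{\Re s}.
\]
(The exact exponents are what dictate the choices $m+\alpha$, $m+4$, $\beta$, and $\varepsilon$ in \eqref{eq:pap:2.1}; I would reverse-engineer them so that after the inversion step the time integrals and powers of $t^{-1}$ match.) This is essentially Lubich's Lemma on bounding $\mathrm F(s_\kappa) - \mathrm F(s)$ for an $A(\theta)$-stable multistep method, which the excerpt points to; the only novelty is tracking the $\Re s$-dependence through $C_F$ rather than fixing a contour $\Re s = 1/t$ and absorbing constants.

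**Then I would** invert. For the first term in \eqref{eq:pap:2.2}, write $\mathrm E_\kappa(s) = \big(\mathrm E_\kappa(s)\,s^{-(m+\alpha)}\big)\,s^{m+\alpha}$; the factor $s^{m+\alpha}$ becomes differentiation of $g$ of order $m+\alpha$ (legitimate since $g \in W_+^\beta$ with $\beta \ge m+\alpha$ forces enough vanishing at $0$), and $\mathrm E_\kappa(s)\,s^{-(m+\alpha)}$ should, with $\alpha$ chosen as in \eqref{eq:pap:2.1}, decay like $|s|^{-2}$ on the line $\Re s = \sigma$, so its inverse Laplace transform is an honest bounded function of $t$, bounded by $\frac{\kappa^2}{2\pi}\int_{\Re s=\sigma} \|\mathrm E_\kappa(s)s^{-(m+\alpha)}\|\,|ds|$. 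Optimizing the free parameter $\sigma$ — the standard CQ trick $\sigma \sim 1/t$ (here capped at $1$, hence the $\min\{t^{-1},1\}$) — and using that $C_F$ is non-increasing so $C_F(c\sigma) = C_F(c/t)$ when $t \ge 1$ while $C_F(c\sigma) \le C_F(c/4)$-type bounds when $t$ is small, produces the prefactor $C(t^{-1})$; the $\kappa^2/\min\{t^\varepsilon,1\}$ scaling comes out of $\int |s|^{\mu+4-(m+\alpha)-2}e^{\sigma t}|ds| \sim \sigma^{\text{power}}e^{\sigma t}$ evaluated at $\sigma = 1/t$. The second term, involving $\mathcal P_m g^{(m+4)}$, arises because the cleanest factorization of $\mathrm E_\kappa$ uses $(1+s)^m$ (whose inverse transform is $\mathcal P_m$) together with $s^{m+4}$ rather than a single power $s^{2m+4}$; I would split $\mathrm E_\kappa(s) = \mathrm E_\kappa(s)(1+s)^{-m}s^{-(m+4)} \cdot (1+s)^m s^{m+4}$ and repeat the argument, which explains why $\beta \ge 2m+4$ is also needed.

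**The main obstacle** I expect is not the inversion bookkeeping but the quantitative behaviour of $s_\kappa$ near the imaginary axis and for large $|s|$: proving $\Re s_\kappa \gtrsim \Re s$ and $|s_\kappa| \lesssim |s|$ \emph{uniformly} with constants independent of $\kappa \in (0,1]$, and getting the consistency estimate $|s_\kappa - s|$ with the right power of $|s|$ and the right power of $\kappa$, requires careful real-analysis estimates on $\tanh$ that must be valid all the way out to $|s| \to \infty$ along the line $\Re s = \sigma$ — where $s_\kappa$ stays bounded (it tends to $2/\kappa$ times a bounded quantity) while $s$ does not, so the naive inequality $|s_\kappa| \le C|s|$ is true but the segment from $s$ to $s_\kappa$ and the Cauchy estimate for $\mathrm F'$ must be set up on a region where \eqref{eq:pap:1.2} actually applies with a usable $C_F$ value. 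Reconciling "$s$ large, $s_\kappa$ bounded" with a clean Mean-Value-Theorem bound — and making sure the implied $C_F$ argument is $\gtrsim \min\{\Re s, 1\}/4$ as claimed — is the technically delicate point, and it is precisely where Banjai's original argument left the time-dependence implicit.
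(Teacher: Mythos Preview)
Your overall strategy --- work on the Laplace side, exploit the consistency estimate $|s_\kappa - s| \lesssim \kappa^2|s|^3$, Cauchy-estimate $\mathrm F'$, invert with $\sigma = 1/t$ --- is the right toolbox, but you have missed the key structural step and consequently misidentified where the two terms in \eqref{eq:pap:2.2} come from. The paper does \emph{not} bound $\mathrm E_\kappa(s)=\mathrm F(s_\kappa)-\mathrm F(s)$ directly for $\mu\ge 0$ and then factor it in two alternative ways. Instead it introduces $\mathrm F^m(s):=s^{-m}\mathrm F(s)$ (which satisfies \eqref{eq:pap:1.2} with exponent $\mu-m\in(-1,0]$) and uses the \emph{additive} decomposition
\[
(f_\kappa-f)*g=(f^m_\kappa-f^m)*g^{(m)}+f^m_\kappa*\big((\partial_t^\kappa)^m g-g^{(m)}\big),
\]
i.e.\ in the transform domain $\mathrm F_\kappa\mathrm G-\mathrm F\mathrm G=(\mathrm F^m_\kappa-\mathrm F^m)s^m\mathrm G+\mathrm F^m_\kappa(s_\kappa^m-s^m)\mathrm G$. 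The first summand is exactly Lubich's case $-1<\mu\le 0$ applied to $\mathrm F^m$ and $g^{(m)}$, and produces the $\int_0^t\|g^{(m+\alpha)}\|$ term. The second summand is a discrete-differentiation error where $\mathrm F^m_\kappa$ is \emph{uniformly bounded} (no growth in $|s|$, since its exponent is $\le 0$) and all the $|s|$-growth sits in $s_\kappa^m-s^m$; it is this piece that yields the $\mathcal P_m g^{(m+4)}$ term. So the two integrals in \eqref{eq:pap:2.2} are contributions from two genuinely different summands, not two factorizations of the same $\mathrm E_\kappa$.

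Your account of $\mathcal P_m$ is accordingly off: the factor $(1+s)^m$ does not arise from a chosen factorization of $\mathrm E_\kappa$, but from bounding $|s_\kappa^m-s^m|$ on the two regions $|\kappa s|\le c$ and $|\kappa s|\ge c$. On the near region one gets $\kappa^2|s|^{m+2}$; on the far region $|s_\kappa|^m\le (8/(\kappa^2\underline\sigma))^m$ together with $1/\kappa\le|s|/c$ forces a term $\kappa^2|s|^{2m+2}$. Combining gives $(1+|s|^m)|s|^{m+2}|\mathrm G(s)|$, and only then does one invoke $1+|s|^m\le 2^{m/2}|1+s|^m$ to recognize $(1+s)^m s^{m+2}\mathrm G=\mathcal L\{\mathcal P_m g^{(m+2)}\}$. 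The obstacle you flag at the end --- that for large $|s|$ the segment from $s$ to $s_\kappa$ is long and the MVT/Cauchy bound degenerates --- is real, and it is resolved precisely by this near/far splitting (on the far piece one abandons the MVT entirely and uses the crude triangle inequality $\|\mathrm F_\kappa\|+\|\mathrm F\|$, harvesting $\kappa^2$ from the smallness of $\int_{|\kappa s|\ge c}|s|^{-\alpha}\,d\omega$). Your proposal neither states this splitting nor explains how to get $\kappa^2$ on the far region, and your direct MVT bound on $\mathrm F$ (rather than on $\mathrm F^m$) does not reduce to Lubich's $\mu\le 0$ result, which is what the first term actually is.
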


Note that 
\[
\alpha=\begin{cases} 
	5, & \mu=m,\\
	4, & \mu\neq m,
		\end{cases}
	\qquad
\beta=\begin{cases}
	5, & \mu=0,\\
	2m+4, & \mu>0,
		\end{cases}
	\qquad
1 + \max\{m,1\} \le \varepsilon \le 2+\max\{m,1\}.
\]

\section{The TRCQ discrete derivative}

We now introduce some key functions for the estimates that follow. First of all, note that the function
\[
\omega\longmapsto 2\,\frac{\tanh(\omega/2)-\omega/2}{\omega^3}=
	\frac{\delta(e^{-\omega})-\omega}{\omega^3}=
	\sum_{\ell=0}^\infty b_\ell \,\omega^{2\ell}
\]
is even and analytic in $B(0;\pi)$. We then define
\[
D(\omega):=\sum_{\ell=0}^\infty \alpha_\ell\,\omega^{2\ell}, \qquad \alpha_\ell:=|b_\ell|,
\]
and note that $D$ is also analytic in the same disk and that the function $[0,\pi) \ni x\mapsto x^2 D(x)$ is strictly increasing, non-negative and diverges as $x\to \pi$. Therefore, there exists a unique 
\[
c_0\in (0,\pi), \qquad \mbox{such that} \qquad c_0^2 D(c_0)=1.
\]
Next, we define
\[
E_m(\omega):=\max\{ D^j(\omega) : j =1,\cdots,m \} \frac{(1+\omega^2)^m - 1}{\omega^2},
\]
and notice that $E_1 \equiv D$. Using these, we are going to present some properties of the characteristic function of the TR rule. At the end of this section we will give a technical result which will be a key tool in the proof of Theorem \ref{thm:pap:2.1}.

\begin{lemma} \label{lem:pap:3.1}
The following inequalities hold:
\begin{enumerate}
\item[{\rm (a)}]
$\Re \delta(e^{-z}) \geq \dfrac12 \min \{\Re z, 1\}$ for all $z \in \C_+$.
\item[{\rm (b)}]
$| \delta(e^{-z}) | \leq \dfrac8{ \min \{\Re z, 1\} }$ for all $z \in \C_+$.
\item[{\rm (c)}]
$|\delta^m(e^{-z}) - z^m| \leq E_m (|z|) |z|^{m+2}$ for all $m \geq 1$ and all $z \in \C_+$ with $|z| < \pi$.
\item[{\rm (d)}]
$\Re \dfrac{\delta(e^{-z})}{z} \geq 1 - |z|^2 D(|z|)$ for all $z \in \C_+$ with $0 < |z| < c_0 < \pi$.
\end{enumerate}\end{lemma}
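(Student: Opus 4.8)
The four inequalities all hinge on understanding $\delta(e^{-z}) = 2\,\frac{1-e^{-z}}{1+e^{-z}} = 2\tanh(z/2)$. My plan is to treat (a)--(b) with an elementary direct calculation using the $\tanh$ form, and to treat (c)--(d) with the power-series machinery ($D$, $E_m$, $c_0$) already set up in the text.

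For (a) and (b), the plan is to write $\zeta := e^{-z}$, so that $|\zeta| = e^{-\Re z} \le 1$, and express things in terms of $\zeta$. One has $\Re\,\delta(\zeta) = \Re\bigl(2\frac{1-\zeta}{1+\zeta}\bigr) = 2\frac{1-|\zeta|^2}{|1+\zeta|^2}$, which is non-negative. Since $|1+\zeta|^2 \le 4$, we get $\Re\,\delta(\zeta) \ge \tfrac12(1-|\zeta|^2) = \tfrac12(1 - e^{-2\Re z})$, and then the elementary bound $1 - e^{-2x} \ge \min\{x,1\}$ for $x>0$ (check it separately on $(0,1]$ by convexity/monotonicity and on $[1,\infty)$ where $1-e^{-2x}\ge 1-e^{-2}>1 = \min\{x,1\}$... actually $\ge\min\{x,1\}=1$ needs $1-e^{-2}\ge$ something; use instead $1-e^{-2x}\ge 1-e^{-1}$ is false-free — I would simply verify $1-e^{-2x}\ge \tfrac12\min\{2x,\,?\}$, adjusting the constant so the stated $\tfrac12\min\{\Re z,1\}$ comes out; the cleanest is $1-e^{-u}\ge \tfrac12\min\{u,1\}$ on $u=2\Re z$, which on $(0,1]$ follows since $1-e^{-u}\ge u/2$ there and on $[1,\infty)$ since $1-e^{-u}\ge 1-e^{-1}>1/2$). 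For (b), $|\delta(\zeta)| = 2\frac{|1-\zeta|}{|1+\zeta|} \le \frac{4}{|1+\zeta|}$, and one needs a lower bound on $|1+\zeta|$; writing $\zeta = e^{-z}$ and separating $|z|$ small vs.\ large, or more simply bounding $|1+\zeta| \ge \Re(1+\zeta) = 1 + e^{-\Re z}\cos(\Im z)$ is not sign-definite, so instead I would use $|1+\zeta|^2 = 1 + 2\Re\zeta + |\zeta|^2 \ge $ (dropping the cross term is dangerous) — better: $|1+\zeta|^2 \ge \bigl(1-|\zeta|\bigr)^2$ only helps when... no. The honest route is $|1+\zeta| \ge |1+\zeta| \ge$ use the identity $|1+\zeta|^2 - |1-\zeta|^2 = 4\Re\zeta$ together with $\Re\,\delta(\zeta)\ge\tfrac12\min\{\Re z,1\}$ from (a) to get $|\delta(\zeta)|^2 = 4\frac{|1-\zeta|^2}{|1+\zeta|^2}$ and $\Re\,\delta(\zeta)=2\frac{1-|\zeta|^2}{|1+\zeta|^2}$; dividing, $\frac{|\delta(\zeta)|^2}{\Re\,\delta(\zeta)} = 2\frac{|1-\zeta|^2}{1-|\zeta|^2}\le 2\frac{(1+|\zeta|)^2}{(1+|\zeta|)(1-|\zeta|)}\cdot\frac{|1-\zeta|^2}{(1-|\zeta|)(1+|\zeta|)}$ — this is getting circular; I expect the clean argument is $|\delta(\zeta)| = |{\Re\,\delta(\zeta)} + i\,\Im\,\delta(\zeta)|$ bounded via a separate estimate $|\delta(\zeta)|\le 2\frac{1+|\zeta|}{1-|\zeta|}$ combined with $|1-\zeta|\le 2$, i.e.\ $|\delta(\zeta)|\le \frac{4}{1-|\zeta|} = \frac{4}{1-e^{-\Re z}} \le \frac{8}{\min\{\Re z,1\}}$, using $1-e^{-x}\ge\tfrac12\min\{x,1\}$ again. \emph{This arithmetic juggling to land exactly on the constant $8$ is the fiddliest part of (a)--(b), but it is routine.}

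For (c), the key observation is that $\delta(e^{-\omega}) - \omega = \omega^3\sum_{\ell\ge0} b_\ell\omega^{2\ell}$ for $|\omega|<\pi$, so $|\delta(e^{-z}) - z| \le |z|^3 D(|z|)$ by the triangle inequality with $\alpha_\ell = |b_\ell|$ (this is exactly case $m=1$, and $E_1\equiv D$). For general $m$ I would write $\delta^m - z^m = (\delta - z)\sum_{j=0}^{m-1}\delta^j z^{m-1-j}$, bound $|\delta - z|\le |z|^3 D(|z|)$, and bound $|\delta|\le |z|(1+|z|^2 D(|z|))\le|z|(1+|z|^2D(|z|))$; telescoping the geometric-type sum $\sum_{j}|\delta|^j|z|^{m-1-j}\le |z|^{m-1}\sum_{j=0}^{m-1}(1+|z|^2D(|z|))^j = |z|^{m-1}\frac{(1+|z|^2D(|z|))^m - 1}{|z|^2 D(|z|)}$, so that $|\delta^m - z^m|\le |z|^3 D(|z|)\cdot|z|^{m-1}\frac{(1+|z|^2D(|z|))^m-1}{|z|^2 D(|z|)} = |z|^{m+2}\bigl((1+|z|^2D(|z|))^m - 1\bigr)/|z|^2$. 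Comparing with the definition $E_m(\omega) = \max\{D^j(\omega):1\le j\le m\}\,\frac{(1+\omega^2)^m - 1}{\omega^2}$, I need $\frac{(1+|z|^2D(|z|))^m - 1}{|z|^2}\le \max_j D^j(|z|)\,\frac{(1+|z|^2)^m-1}{|z|^2}$, which follows by expanding both numerators binomially and comparing coefficient-by-coefficient: the coefficient of $|z|^{2k}$ on the left is $\binom{m}{k}D(|z|)^k$, on the right is $\binom{m}{k}\max_j D^j(|z|)$, and $D(|z|)^k\le\max_j D^j(|z|)$ for $1\le k\le m$. \emph{The only subtlety is that $D(\omega)\ge1$ need not hold for small $\omega$, so the "max over $j$" in $E_m$ is genuinely needed to dominate $D^k$ by $D^{j}$ for whichever power behaves worst; tracking this is the main conceptual point of (c).}

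For (d), the plan is: $\Re\frac{\delta(e^{-z})}{z} = \Re\frac{z + (\delta(e^{-z}) - z)}{z} = 1 + \Re\frac{\delta(e^{-z}) - z}{z} \ge 1 - \left|\frac{\delta(e^{-z}) - z}{z}\right| \ge 1 - \frac{|z|^3 D(|z|)}{|z|} = 1 - |z|^2 D(|z|)$, valid for $0<|z|<\pi$, and in particular for $0<|z|<c_0<\pi$ this lower bound is positive by the definition of $c_0$ (though positivity is not strictly asserted in the statement, it is the point of restricting to $|z|<c_0$). \emph{This one is immediate once (c)'s $m=1$ bound is in hand; there is no real obstacle.} Overall the main work is the bookkeeping in (c) and pinning down the constant in (a)--(b).
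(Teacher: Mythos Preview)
Your plan for (c) and (d) is correct and essentially matches the paper's. In (c) you factor $\delta^m - z^m = (\delta - z)\sum_{j}\delta^{j} z^{m-1-j}$ whereas the paper expands $(z + z^3 q(z))^m$ binomially with $q(z)=(\delta(e^{-z})-z)/z^3$; both routes land on the same intermediate bound $|z|^m\sum_{k=1}^{m}\binom{m}{k}D(|z|)^{k}|z|^{2k}$, which is then dominated by $E_m(|z|)\,|z|^{m+2}$ exactly via the coefficient comparison you describe. Part (d) is identical to the paper's argument, and your final route for (b) ($|\delta(\zeta)|\le 4/(1-|\zeta|)$ combined with $1-e^{-x}\ge\tfrac12\min\{x,1\}$) also works.

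There is, however, a real gap in (a). The bound $|1+\zeta|^2\le 4$ yields only $\Re\,\delta(e^{-z})\ge\tfrac12(1-e^{-2\Re z})$, and this is \emph{not} $\ge\tfrac12\min\{\Re z,1\}$: at $\Re z=1$ you get $\tfrac12(1-e^{-2})\approx 0.43<\tfrac12$. Your fallback $1-e^{-u}\ge\tfrac12\min\{u,1\}$ with $u=2\Re z$ gives $\Re\,\delta\ge\tfrac14\min\{2\Re z,1\}$, which is still strictly below $\tfrac12\min\{\Re z,1\}$ on all of $(\tfrac12,\infty)$. No amount of ``adjusting the constant'' recovers the statement from $|1+\zeta|\le 2$; you must sharpen the denominator to $|1+\zeta|\le 1+|\zeta|$, which gives
\[
\Re\,\delta(e^{-z})=2\,\frac{1-|\zeta|^{2}}{|1+\zeta|^{2}}\ \ge\ 2\,\frac{1-|\zeta|}{1+|\zeta|}=2\tanh\Bigl(\tfrac{\Re z}{2}\Bigr),
\]
and then $\tanh(x/2)\ge\tfrac14\min\{x,1\}$ (concavity on $[0,1]$, monotonicity thereafter) delivers the stated constant. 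This is precisely the paper's argument; the step you flagged as routine juggling is in fact the one place where the cruder bound fails.
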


\begin{proof}
For $|\zeta| < 1$, it is easy to verify that $\Re \frac{1-\zeta}{1+\zeta} \geq \frac{1 - |\zeta|}{1 + |\zeta|}.$ Using this with $\zeta = e^{-z}$ for $\Re z > 0$ and noting that $|e^{-z}| = e^{-\Re z}$, we write
\[
\frac12 \Re \delta(e^{-z}) \geq \frac{1 - e^{-\Re z}}{1 + e^{-\Re z}} = \tanh  \left( \frac{\Re z}2 \right).
\]
The proof of (a) follows then from
\begin{equation}\label{eq:3.100}
\tanh \frac x 2 \geq \frac14 \min\{ x,1 \}.
\end{equation}
We prove (b) by using \eqref{eq:3.100} together with the triangle and reverse triangle inequalities in the following way
\[
\frac12 | \delta(e^{-z}) | \le \frac{1 + e^{-\Re z}}{1 - e^{-\Re z}} = \coth  \left( \frac{\Re z}2 \right) \leq \frac4{\min\{\Re z, 1\}}.
\]

To show (c) and (d), we define 
\[
\C_+ \cap B(0;\pi)\ni z \longmapsto q (z) := \frac{\delta(e^{-z})-z}{z^3},
\]
and observe that $|q(z)| \leq D(|z|)$ which holds because of the definition of $D$. Using this, it is not hard to see that
\begin{align*}
	\left | \delta^m (e^{-z}) - z^m \right | 
		& =  \left | (z + z^3 q(z))^m - z^m \right | \\
		&\leq  \sum_{j=0}^{m-1} {m \choose j} |z|^j D^{m-j} (|z|) |z|^{3m -3j}
		\  \leq \  E_m(|z|) |z|^{m+2},
\end{align*}
which proves (c). For (d), we write
\[
\Re \frac{\delta(e^{-z})} z \geq 1 - |z|^2 |q(z)| \geq 1 - |z|^2 D(|z|) \qquad \forall z \in \C_+ \cap B(0;\pi).
\]
Note that the result is stated (and later used) only for $|z|<c_0<\pi$, which ensures that the right-hand side of the above inequality is positive. 
\end{proof}

The discrete version of this lemma will be a building block for the rest of this paper.

\begin{proposition}\label{prop:pap:3.2}
For $\kappa \in (0,1]$, the following inequalities hold:
\begin{enumerate}
\item[\rm (a)]
$\Re s_\kappa \geq \dfrac12 \min \{\Re s, 1\}$ for all $s \in \C_+$.
\item [\rm (b)]
$|s_\kappa| \leq \dfrac8 {\kappa^2 \min\{\Re s, 1\}}$ for all $s \in \C_+$.
\item [\rm (c)]
$|s_\kappa^m - s^m| \leq E_m (|\kappa s|) \kappa^2 |s|^{m+2}$ for all $m \geq 1$ and all $s \in \C_+$ with $|\kappa s| < \pi$.
\item [\rm (d)]
$\Re \dfrac{s_\kappa}{s} \geq 1 - |\kappa s|^2 D(|\kappa s|)$ for all $s \in \C_+$ with $0 < |\kappa s| < c_0 < \pi$.
\end{enumerate}
\end{proposition}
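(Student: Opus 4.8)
The plan is to reduce each of the four assertions to the corresponding item of Lemma~\ref{lem:pap:3.1} by the substitution $z:=\kappa s$. The one identity that makes everything work is
\[
s_\kappa=\frac{\delta(e^{-\kappa s})}{\kappa}=\frac{\delta(e^{-z})}{\kappa},\qquad s=\frac{z}{\kappa},\qquad z:=\kappa s,
\]
together with the observation that $z\in\C_+$ whenever $s\in\C_+$, since $\Re z=\kappa\,\Re s>0$. Once this is in place, every inequality in Lemma~\ref{lem:pap:3.1} transfers after dividing by the appropriate power of $\kappa$, and what is left is elementary bookkeeping that uses only the hypothesis $\kappa\in(0,1]$.

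Concretely, for (a) I would write $\Re s_\kappa=\kappa^{-1}\Re\delta(e^{-z})\ge\tfrac1{2\kappa}\min\{\Re z,1\}=\tfrac12\min\{\Re s,1/\kappa\}\ge\tfrac12\min\{\Re s,1\}$, the last step because $1/\kappa\ge1$. For (b), $|s_\kappa|=\kappa^{-1}|\delta(e^{-z})|\le\tfrac{8}{\kappa\min\{\Re z,1\}}=\tfrac{8}{\min\{\kappa^2\Re s,\kappa\}}\le\tfrac{8}{\kappa^2\min\{\Re s,1\}}$, where the final inequality holds since $\kappa\ge\kappa^2$ gives $\min\{\kappa^2\Re s,\kappa\}\ge\min\{\kappa^2\Re s,\kappa^2\}=\kappa^2\min\{\Re s,1\}$. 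For (c), when $|z|=|\kappa s|<\pi$ Lemma~\ref{lem:pap:3.1}(c) yields $|\delta^m(e^{-z})-z^m|\le E_m(|z|)|z|^{m+2}$; dividing by $\kappa^m$ and using $|z|^{m+2}=\kappa^{m+2}|s|^{m+2}$ gives exactly $|s_\kappa^m-s^m|\le E_m(|\kappa s|)\kappa^2|s|^{m+2}$. For (d), $\Re(s_\kappa/s)=\Re\bigl(\delta(e^{-z})/(\kappa s)\bigr)=\Re\bigl(\delta(e^{-z})/z\bigr)\ge1-|z|^2D(|z|)=1-|\kappa s|^2D(|\kappa s|)$ by Lemma~\ref{lem:pap:3.1}(d), valid under the stated constraint $0<|\kappa s|<c_0<\pi$.

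There is no genuine obstacle in this proof; it is essentially a change of variables. The only mildly delicate points are the two $\min$-manipulations in (a) and (b), where the inequality $\kappa\le1$ is the fact doing the real work, so those are the steps I would be most careful to spell out.
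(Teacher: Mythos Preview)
Your proof is correct and follows exactly the same approach as the paper, which simply states that the result follows from Lemma~\ref{lem:pap:3.1} by inserting $z=\kappa s$ and using $\kappa\in(0,1]$. You have merely made explicit the $\min$-manipulations in (a) and (b) that the paper leaves to the reader.
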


\begin{proof}
The result follows from Lemma \ref{lem:pap:3.1} by simply inserting $z=\kappa s$ and noting that $\kappa \in (0,1]$.
\end{proof}

\begin{lemma}\label{lem:pap:3.3}
For $g \in W^2_+(\R;X)$ and $\sigma > 0$ we have
\[
\int_{-\infty}^\infty \| \mr G(\sigma + i \omega) \|_X \mathrm d \omega \leq \frac\pi\sigma \int_0^\infty \| \ddot g (\tau) \|_X \mathrm d \tau.
\]
\end{lemma}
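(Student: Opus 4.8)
The plan is to exploit the vanishing initial conditions built into $W^2_+(\R;X)$ in order to rewrite $\mr G$, the Laplace transform of $g$, in terms of the Laplace transform of $\ddot g$, and then to integrate the elementary kernel $|s|^{-2}$ along the vertical line $\Re s=\sigma$.

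First I would record what membership in $W^2_+(\R;X)$ gives us: $g$ is causal, $g(0)=\dot g(0)=0$, and $\ddot g\in L^1(\R;X)$. Since $\dot g(t)=\int_0^t\ddot g(\tau)\,\mathrm d\tau$, the derivative $\dot g$ is bounded by $\|\ddot g\|_{L^1(\R;X)}$, so $g$ grows at most linearly; hence, for every $\sigma>0$, the products $e^{-st}g(t)$ and $e^{-st}\dot g(t)$ tend to $0$ as $t\to\infty$ whenever $\Re s=\sigma$. Integrating by parts twice in $\mr G(s)=\int_0^\infty e^{-st}g(t)\,\mathrm d t$ and using that all boundary terms vanish (at $t=0$ by the initial conditions, at $t=\infty$ by the decay just noted) yields
\[
\mr G(s)=\frac1{s^2}\int_0^\infty e^{-st}\ddot g(t)\,\mathrm d t,\qquad \Re s>0.
\]

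Next I would estimate pointwise on the line $s=\sigma+i\omega$. Since $|e^{-st}|=e^{-\sigma t}\le 1$ for $t\ge 0$,
\[
\|\mr G(\sigma+i\omega)\|_X\le\frac1{|\sigma+i\omega|^2}\int_0^\infty\|\ddot g(t)\|_X\,\mathrm d t=\frac1{\sigma^2+\omega^2}\int_0^\infty\|\ddot g(t)\|_X\,\mathrm d t .
\]
Finally, since the last factor is a constant in $\omega$, integrating in $\omega$ and using $\int_{-\infty}^\infty\frac{\mathrm d\omega}{\sigma^2+\omega^2}=\frac\pi\sigma$ gives exactly the asserted bound.

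The computation is entirely routine; the only point needing a little care is the justification of the double integration by parts, that is, checking that $g$ and $\dot g$ do not grow fast enough to spoil the vanishing of the boundary terms at infinity — which is precisely what the $L^1$ bound on $\ddot g$ together with $\dot g(0)=0$ provides. Alternatively, one can bypass integration by parts by invoking the standard fact (used implicitly throughout the paper) that for a causal $g$ the Laplace transform of $\ddot g$ equals $s^2\mr G(s)$ and, for $\ddot g\in L^1$, is given by the ordinary integral $\int_0^\infty e^{-st}\ddot g(t)\,\mathrm d t$, whose $X$-norm is bounded by $\|\ddot g\|_{L^1(\R;X)}$ on $\C_+$.
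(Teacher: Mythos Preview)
Your proof is correct and follows essentially the same route as the paper: both arguments rewrite $\mr G(s)=s^{-2}\mathcal L\{\ddot g\}(s)$, bound $\|\mathcal L\{\ddot g\}(s)\|_X$ uniformly on $\Re s=\sigma$ by $\int_0^\infty\|\ddot g(\tau)\|_X\,\mathrm d\tau$, and then compute $\int_{-\infty}^\infty(\sigma^2+\omega^2)^{-1}\,\mathrm d\omega=\pi/\sigma$. The only difference is presentational: you justify the identity $s^2\mr G(s)=\mathcal L\{\ddot g\}(s)$ explicitly via integration by parts and a careful check of the boundary terms, whereas the paper simply invokes it.
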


\begin{proof}
We can easily estimate
        	\begin{align*}
        		\int_{-\infty}^\infty   \| \mr G(\sigma + i \omega) \|_X \mathrm d \omega 
        		& = 
        		\int_{-\infty}^\infty  \frac1{\sigma^2 + \omega^2} \| (\sigma + i \omega)^2 \mr G(\sigma + i \omega) \|_X \mathrm d \omega \\
        		& \leq 
        		\sup_{\Re s = \sigma}  \| s^2 \mr G(s) \|_X \ \int_{-\infty}^\infty \frac{\mathrm d \omega}{\sigma^2 + \omega^2}
        		\ = \
        		\sup_{\Re s = \sigma} \| \mathcal L \{ \ddot g \}(s) \|_X \, \frac\pi\sigma.
        	\end{align*}
This finishes the proof.
\end{proof}

\begin{proposition}\label{prop:pap:3.4a}
If $g \in W_+^{2m+4}(\R;X)$ with $m \geq 0$, and 
\[
\mr G(s) := \mathcal L\{g\}(s) , \qquad \mr H(s):=(s_\kappa^m - s^m) \mr G(s),
\]
then for all $\sigma > 0$ we have
\[
\int_{-\infty}^\infty   \| \mr H(\sigma + i \omega)  \|_X \mathrm d \omega 
\leq 
\kappa^2 \frac {C_{m}^1} {\sigma \min\{\sigma^m,1\}} \int_0^\infty  \| (\mathcal P_m g^{(m+4)}) (\tau)  \|_X \mathrm d \tau,
\]
where $C_{m}^1$ is a positive constant depending only on $m$.
\end{proposition}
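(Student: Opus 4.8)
The plan is to mimic the proof of Lemma~\ref{lem:pap:3.3}: pull out a factor $(\sigma^2+\omega^2)^{-1}$ and reduce everything to a pointwise bound, along vertical lines, of a scalar multiplier. If $m=0$ then $s_\kappa^0-s^0\equiv 0$, so $\mr H\equiv 0$ and there is nothing to prove; assume $m\ge 1$. Because $g\in W_+^{2m+4}(\R;X)$, the derivatives $g^{(\ell)}$ vanish at $0$ for $\ell\le 2m+3$, so $\mathcal L\{g^{(m+4)}\}(s)=s^{m+4}\mr G(s)$; and since $(\mathcal P_m h)(t)=\sum_{\ell=0}^m\binom{m}{\ell}h^{(\ell)}(t)$, the same vanishing gives $\mathcal L\{\mathcal P_m g^{(m+4)}\}(s)=(1+s)^m s^{m+4}\mr G(s)$. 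Setting $M(s):=\frac{s_\kappa^m-s^m}{(1+s)^m s^{m+2}}$, we then have $s^2\mr H(s)=M(s)\,\mathcal L\{\mathcal P_m g^{(m+4)}\}(s)$, so that along $\Re s=\sigma$
\[
\int_{-\infty}^\infty\|\mr H(\sigma+i\omega)\|_X\,\mathrm d\omega
=\int_{-\infty}^\infty\frac{\|s^2\mr H(s)\|_X}{\sigma^2+\omega^2}\,\mathrm d\omega
\le\Big(\sup_{\Re s=\sigma}|M(s)|\Big)\Big(\sup_{\Re s=\sigma}\|\mathcal L\{\mathcal P_m g^{(m+4)}\}(s)\|_X\Big)\frac{\pi}{\sigma}.
\]
The second supremum is bounded by $\int_0^\infty\|(\mathcal P_m g^{(m+4)})(\tau)\|_X\,\mathrm d\tau$ (which we may assume finite, otherwise the claimed inequality is trivial), so the proof reduces to the multiplier bound $\sup_{\Re s=\sigma}|M(s)|\le C\kappa^2/\min\{\sigma^m,1\}$ with $C$ depending only on $m$.

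To prove this multiplier bound I would split the line $\Re s=\sigma$ according to whether $|\kappa s|\le 1$ or $|\kappa s|>1$. Where $|\kappa s|\le 1$, Proposition~\ref{prop:pap:3.2}(c) applies (note $1<\pi$), and since $E_m$ is non-decreasing on $[0,\pi)$ we get $|s_\kappa^m-s^m|\le E_m(|\kappa s|)\kappa^2|s|^{m+2}\le E_m(1)\kappa^2|s|^{m+2}$; dividing by $|1+s|^m|s|^{m+2}\ge|s|^{m+2}$ (as $\Re(1+s)\ge 1$) the factor $|s|^{m+2}$ cancels, leaving $|M(s)|\le E_m(1)\kappa^2\le E_m(1)\kappa^2/\min\{\sigma^m,1\}$. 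Where $|\kappa s|>1$ there is no intrinsic $\kappa^2$, so it must be produced out of the largeness of $|s|$: here $|s|>1/\kappa$, and using $|s_\kappa^m-s^m|\le|s_\kappa|^m+|s|^m$, Proposition~\ref{prop:pap:3.2}(b) in the form $|s_\kappa|^m\le 8^m\kappa^{-2m}\min\{\sigma,1\}^{-m}=8^m\kappa^{-2m}/\min\{\sigma^m,1\}$, and the elementary inequalities $|1+s|^{-m}\le|s|^{-m}<\kappa^m$ and $|s|^{-(m+2)}<\kappa^{m+2}$, a short count of the powers of $\kappa$ gives $|M(s)|\le(8^m+1)\kappa^2/\min\{\sigma^m,1\}$ there as well. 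Combining the two cases yields the multiplier bound with $C=E_m(1)+8^m+1$, hence the proposition with $C_m^1=\pi C$.

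The step I expect to be the main obstacle is this last estimate where $|\kappa s|>1$: one has to manufacture a factor $\kappa^2$ out of the decay of $1/|s|$ rather than out of a consistency estimate such as Proposition~\ref{prop:pap:3.2}(c), and since the crude bound $|s_\kappa|^m\le 8^m\kappa^{-2m}\min\{\sigma,1\}^{-m}$ costs $2m$ negative powers of $\kappa$, these can only be absorbed because the extra factor $(1+s)^{-m}$ — that is, precisely the higher-order operator $\mathcal P_m$ built into the statement — contributes $m$ positive powers of $\kappa$ via $|1+s|\ge|s|>1/\kappa$. Making this bookkeeping close with exactly the weight $1/\min\{\sigma^m,1\}$, rather than a worse power of $\sigma$, is where care is needed.
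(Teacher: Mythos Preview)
Your argument is correct. The ingredients are the same as in the paper---the splitting according to the size of $|\kappa s|$, Proposition~\ref{prop:pap:3.2}(c) in the near region, Proposition~\ref{prop:pap:3.2}(b) together with the triangle inequality in the far region, and the identification $(1+s)^m s^{m+4}\mr G(s)=\mathcal L\{\mathcal P_m g^{(m+4)}\}(s)$---but the organization differs. The paper splits the \emph{integral} $\int_{\R}\|\mr H(\sigma+i\omega)\|_X\,\dd\omega$ into pieces over $I^1$ and $I^2$, bounds each piece separately, recombines them into an integral with weight $(1+|s|^m)\|s^{m+2}\mr G(s)\|_X$, invokes the elementary inequality $1+|s|^m\le 2^{m/2}|1+s|^m$, and only then applies Lemma~\ref{lem:pap:3.3}. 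You instead factor out $(1+s)^m s^{m+2}$ from the start and reduce everything to a pointwise bound on the scalar multiplier $M(s)$, which you then prove by the same regional splitting; this is a bit more streamlined, avoids the auxiliary inequality $1+|s|^m\le 2^{m/2}|1+s|^m$, and makes transparent why $\mathcal P_m$ (i.e., the factor $(1+s)^m$) is exactly what is needed to balance the $\kappa^{-2m}$ loss from Proposition~\ref{prop:pap:3.2}(b). The paper also leaves the cutoff $c\in(0,\pi)$ free and minimizes the constant over it at the end, whereas you fix $c=1$; this changes only the numerical value of $C_m^1$.
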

\begin{proof}
For the sake of convenience, we will abuse notation by eliminating the explicit dependence with respect to $\omega$ in 
\[
	s = s(\omega) := \sigma + i \omega, \qquad  
	s_\kappa = s_\kappa(\omega) := \frac1\kappa \delta(e^{-(\sigma + i \omega) \kappa}),
\]
where $\sigma>0$ is fixed.
We now take an arbitrary but fixed value of $c \in (0, \pi)$, and define the integration regions 
$ I^1 :=  \left\{ \omega \in \R\,: |\sigma  + i \omega| \leq c / \kappa \right\}$
and	
$I^2 :=  \left\{ \omega \in \R: |\sigma  + i \omega| \geq  c / \kappa \right\}$,
covering the entire real line. 
We split our target integral into three pieces, and work on them one by one
\begin{align*}
	\int_{-\infty}^\infty  \| \mr H(s) \|_X \dd \omega 
	\leq & 
	\int_{I^1} |s_\kappa^m - s^m| \, \| \mr G(s) \|_X \mathrm d \omega + \int_{I^2} |s_\kappa|^m \, \| \mr G(s) \|_X \mathrm d \omega + \int_{I^2} |s|^m \, \| \mr G(s) \|_X \mathrm d \omega.
\end{align*}
Since $|\kappa s| \leq c$ on $I^1$ and $E_m$ is increasing, Proposition \ref{prop:pap:3.2}(c)  yields
\[
\int_{I^1} |s_\kappa^m - s^m| \, \| \mr G(s) \|_X \mathrm d \omega 
 \leq 
\kappa^2 E_m(c)  \int_{-\infty}^\infty |s|^{m+2} \, \| \mr G(s) \|_X \mathrm d \omega.
\]
For the second integral, using Proposition \ref{prop:pap:3.2}(b) and the fact that $c \leq | \kappa s|$ on $I^2$, we have
\[
\int_{I^2} |s_\kappa |^m \, \| \mr G(s) \|_X \mathrm d \omega
			\leq \frac {\kappa^2}{c^{2m+2}}  \frac {8^m} {\min\{\sigma^m,1\}}  \int_{-\infty}^\infty \| s^{2m+2} \mr G(s) \|_X \mathrm d \omega.
\]
Lastly, the definition of $I^2$ implies that
\[
\int_{I^2} |s|^m \, \| \mr G(s) \|_X \mathrm d \omega 
		\leq  
		\frac{\kappa^2}{c^2}\int_{-\infty}^\infty \, \| s^{m+2} \mr G(s) \|_X \mathrm d \omega.
\]
Combining these three estimates we can write
\begin{equation} \label{eq:pap:3.4}
\int_{-\infty}^\infty \| \mr H(s) \|_X \mathrm d\omega 
\leq \kappa^2 \frac{e_{m,c}}{\min\{\sigma^m,1\}} \int_{-\infty}^\infty (1+|s|^m) \| s^{m+2} \mr G(s) \|_X \mathrm d \omega,
\end{equation}
where 
\[
e_{m,c} := \max \left\{ E_m(c) + \frac1{c^2},  \frac{8^m}{c^{2m+2}} \right\}.
\] 
The definition of $\mathcal P_m$ implies that 
\[
(1+s)^m s^{m+2} \mr G(s) = \mathcal L \{ \mathcal P_m g^{m+2} \} (s).
\]
Therefore, using that $1 + |s|^m \leq 2^{m/2} |1 + s|^m$ for $s \in \C_+$ and Lemma \ref{lem:pap:3.3}, we can write
\begin{align*}
	\int_{-\infty}^\infty  \| (1 + |s|)^m s^{m+2} \mr G(s) \|_X \mathrm d \omega
	& \leq
	2^{m/2} \int_{-\infty}^\infty  \| (1 + s)^m s^{m+2} \mr G(s) \|_X \mathrm d \omega \\
	& \leq
	2^{m/2} \frac \pi \sigma \int_0^\infty   \| (\mc P_m g^{(m+4)}) (\tau) \|_X \mathrm d \tau.
\end{align*}
This inequality and \eqref{eq:pap:3.4} prove the result with $C_m=2^{m/2} \pi \, e_{m,c}$. However, the dependence on $c\in (0,\pi)$ is limited to $e_{m,c}$, so we can eliminate $c$ by taking  $e_m := \ds \min_{c \, \in (0,\pi)} e_{m,c}$ in the bounds. 
\end{proof}

\section{Revisiting a result of Christian Lubich}
In this section, we work on some key results when $\mathrm F$ satisfies \eqref{eq:pap:1.1}-\eqref{eq:pap:1.2} with $\mu\le 0$. We start with showing that $\mr F_\kappa$, like $\mr F$, is the Laplace transform of a causal tempered $\mc B(X,Y)$-valued distribution.
In Proposition \ref{prop:pap:4.4} we revisit Lubich's  \cite[Theorem 3.1]{Lubich1994}, and prove it for $-1<\mu\le0$ by including the case $\mu = 0$ which was missing in that manuscript, and add the explicit dependence with respect to the time variable in the bounds. 

\begin{proposition} \label{prop:pap:4.1}
If $\mathrm F$ satisfies \eqref{eq:pap:1.1}-\eqref{eq:pap:1.2} with $\mu\le 0$, then
\begin{enumerate}
\item[{\rm (a)}]
$ \| \mr F_\kappa(s) \|_{X \to Y} \leq \Theta_1(\Re s) $ for all $s \in \C_+$.
\item[{\rm (b)}]
$ \| \mr F'(s) \|_{X \to Y}  \leq \Theta_2(\Re s) |s|^\mu$ for all $s \in \C_+$.
\item[{\rm (c)}] 
$ \| \mr F_\kappa (s) - \mr F(s)  \|_{X \to Y}  \leq \kappa^2 \Theta_2\left(\tfrac12\min\{\Re s,1\}\right) \Theta_3(|\kappa s|) |s|^{\mu +3}$ for all $s \in \C_+ \cap B(0;c_0/\kappa)$.
\end{enumerate}
In the above bounds
\begin{alignat*}{6}
\Theta_1(x) :=&  (\tfrac12 \min\{x, 1\})^\mu \, C_F(\tfrac12 \min\{x,1\}),\\
\Theta_2(x) :=& \frac{2^{1-\mu}}x  C_F(\tfrac12 x),\\
\Theta_3(x) :=& D(x) (1 - x^2 D(x))^\mu.
\end{alignat*}
The functions $\Theta_1$ and $\Theta_2$ are defined on $(0,\infty)$ and
they can be bounded by a negative power of $x$ as $x\to 0$. The function $\Theta_3$ 
is defined on $(0,c_0)$, is increasing, and when $\mu \neq 0$ it diverges as $x \to c_0$.
\end{proposition}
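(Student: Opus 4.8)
The plan is to bound each of the three quantities by tracing the defining relation $\mathrm F_\kappa(s) = \mathrm F(s_\kappa)$ through the growth hypothesis \eqref{eq:pap:1.2} and the elementary facts about $s_\kappa$ collected in Proposition \ref{prop:pap:3.2}. Since $\mu \le 0$, the factor $|s_\kappa|^\mu$ is \emph{decreasing} in $|s_\kappa|$, so to get an upper bound on $\|\mathrm F(s_\kappa)\|_{X\to Y} \le C_F(\Re s_\kappa)\,|s_\kappa|^\mu$ we need a \emph{lower} bound on $|s_\kappa|$ and a \emph{lower} bound on $\Re s_\kappa$ (the latter because $C_F$ is non-increasing, so $C_F(\Re s_\kappa) \le C_F(\tfrac12\min\{\Re s,1\})$ by Proposition \ref{prop:pap:3.2}(a)).

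For part (a): from Proposition \ref{prop:pap:3.2}(a) we have $\Re s_\kappa \ge \tfrac12\min\{\Re s,1\}$, hence $|s_\kappa| \ge \Re s_\kappa \ge \tfrac12\min\{\Re s,1\}$, and since $\mu\le 0$ this gives $|s_\kappa|^\mu \le (\tfrac12\min\{\Re s,1\})^\mu$; combining with the monotonicity of $C_F$ yields exactly $\Theta_1(\Re s)$. For part (b), $\mathrm F'$ is obtained from $\mathrm F$ by a Cauchy integral over a circle of radius comparable to $\Re s$ (say radius $\tfrac12\Re s$ centred at $s$, which stays in $\C_+$), so $\|\mathrm F'(s)\|_{X\to Y} \le (\tfrac12\Re s)^{-1}\sup_{|w-s|=\frac12\Re s}\|\mathrm F(w)\|_{X\to Y}$; on that circle $\Re w \ge \tfrac12\Re s$ and $|w| \le \tfrac32|s| \le 2|s|$ (in fact $|w|\le |s|+\tfrac12\Re s \le \tfrac32|s|$), so using \eqref{eq:pap:1.2} and $C_F$ non-increasing we land on $\frac{2}{\Re s}C_F(\tfrac12\Re s)(\tfrac32)^{|\mu|}|s|^\mu$; the constant $2^{1-\mu}$ in $\Theta_2$ is the clean majorant of $2\cdot(3/2)^{|\mu|}$ for $\mu\le 0$ (here one should be slightly careful and may need $(3/2)^{|\mu|}\le 2^{|\mu|} = 2^{-\mu}$, which is what makes $2^{1-\mu}$ work). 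For part (c), write $\mathrm F_\kappa(s) - \mathrm F(s) = \mathrm F(s_\kappa) - \mathrm F(s) = \int_0^1 \mathrm F'(s + \theta(s_\kappa - s))\,(s_\kappa - s)\,\mathrm d\theta$ along the segment from $s$ to $s_\kappa$; this requires knowing the segment lies in $\C_+$, which follows from Proposition \ref{prop:pap:3.2}(a) applied along the way (convexity of $\C_+$ plus $\Re s_\kappa > 0$ suffices). Then $\|s_\kappa - s\| \le |\kappa s|^2 D(|\kappa s|)\,|s|$ comes from Proposition \ref{prop:pap:3.2}(c),(d) — more precisely $|s_\kappa - s| = |s|\,|s_\kappa/s - 1|$ and one bounds $|s_\kappa/s - 1|$ by $|\kappa s|^2 D(|\kappa s|)$ using the series $q$ from the proof of Lemma \ref{lem:pap:3.1}; meanwhile along the segment $\Re(s+\theta(s_\kappa-s)) \ge \tfrac12\min\{\Re s,1\}$ and $|s+\theta(s_\kappa-s)|^\mu \le (1-|\kappa s|^2 D(|\kappa s|))^\mu|s|^\mu$ — this last step is where the restriction $|\kappa s| < c_0$ matters, since it guarantees $\Re s_\kappa/|s| \ge 1 - |\kappa s|^2 D(|\kappa s|) > 0$ so the base of the power is positive, and for $\mu\le 0$ raising the smaller positive lower bound to the $\mu$ gives the larger value. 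Applying (b) inside the integral then produces $\kappa^2\,\Theta_2(\tfrac12\min\{\Re s,1\})\,D(|\kappa s|)(1-|\kappa s|^2 D(|\kappa s|))^\mu|s|^{\mu+3}$, i.e. the $\Theta_3$ bound.

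The remaining assertions about the $\Theta_i$ are quick: $C_F(x)\le c_0 x^{-m}$ near $0$ forces $\Theta_1(x) = O(x^{\mu - m})$ and $\Theta_2(x) = O(x^{-1-m})$ as $x\to 0^+$, both negative powers; and $\Theta_3 = D(x)(1-x^2D(x))^\mu$ on $(0,c_0)$ is a product of the increasing non-negative $D$ with, when $\mu < 0$, the increasing factor $(1-x^2D(x))^\mu$ (since $x^2D(x)\uparrow 1$ as $x\uparrow c_0$ so $1-x^2D(x)\downarrow 0$ and its negative power $\uparrow\infty$), hence $\Theta_3$ is increasing and diverges at $c_0$ when $\mu\ne 0$ (when $\mu = 0$, $\Theta_3 = D$, still increasing but finite at $c_0$).

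\textbf{Main obstacle.} The genuinely delicate point is part (c): one must verify the line segment $[s, s_\kappa]$ stays in $\C_+$ and, more subtly, get a uniform \emph{lower} bound on $\Re$ (for $C_F$) and on the modulus (for $|\cdot|^\mu$ with $\mu\le 0$) \emph{along the whole segment}, not just at the endpoints — the modulus bound $|s+\theta(s_\kappa - s)| \ge (1 - |\kappa s|^2 D(|\kappa s|))|s|$ needs the estimate $\Re(s_\kappa/s)\ge 1 - |\kappa s|^2 D(|\kappa s|)$ from Proposition \ref{prop:pap:3.2}(d) to control the real part of the convex combination, and this is exactly why the hypothesis is restricted to $|\kappa s| < c_0$. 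Everything else is bookkeeping with the Cauchy estimate and the explicit constants.
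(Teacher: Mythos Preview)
Your overall strategy matches the paper's proof almost exactly: part (a) via Proposition \ref{prop:pap:3.2}(a) and monotonicity, part (b) via a Cauchy estimate on a circle of radius $\tfrac12\Re s$, and part (c) via the line integral (the paper states the Mean Value Theorem, which amounts to the same thing) combined with Proposition \ref{prop:pap:3.2}(a),(c),(d). The qualitative remarks about $\Theta_1,\Theta_2,\Theta_3$ are also correct.

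There is, however, a genuine slip in part (b). On the Cauchy circle $|w-s|=\tfrac12\Re s$ you write $|w|\le \tfrac32|s|$ and conclude $|w|^\mu \le (\tfrac32)^{|\mu|}|s|^\mu$. But $\mu\le 0$ means $t\mapsto t^\mu$ is \emph{non-increasing}, so an \emph{upper} bound on $|w|$ gives a \emph{lower} bound on $|w|^\mu$, not an upper one. What you need is the reverse triangle inequality
\[
|w|\ \ge\ |s|-|w-s|\ =\ |s|-\tfrac12\Re s\ \ge\ \tfrac12|s|,
\]
which yields $|w|^\mu \le (\tfrac12|s|)^\mu = 2^{-\mu}|s|^\mu$. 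Plugging this into the Cauchy estimate $\|\mathrm F'(s)\|_{X\to Y}\le \tfrac{2}{\Re s}\sup_{|w-s|=\frac12\Re s}\|\mathrm F(w)\|_{X\to Y}$ together with $\Re w\ge \tfrac12\Re s$ and the monotonicity of $C_F$ gives precisely $\tfrac{2^{1-\mu}}{\Re s}\,C_F(\tfrac12\Re s)\,|s|^\mu=\Theta_2(\Re s)\,|s|^\mu$, with no need for the auxiliary inequality $(3/2)^{|\mu|}\le 2^{|\mu|}$. This is exactly the paper's argument, and with this correction your proof of (b) goes through.
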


\begin{proof}
To prove (a), we first observe that Proposition \ref{prop:pap:3.2}(a) implies $s_\kappa \in \C_+$, therefore \eqref{eq:pap:1.2} gives
\[
\| \mr F_\kappa(s) \|_{X \to Y} \leq C_F(\Re s_\kappa) |s_\kappa|^\mu.
\]
The rest of the proof follows from Proposition \ref{prop:pap:3.2}(a) and the fact that $C_F$ and $(\cdot)^\mu$ are non-decreasing functions on $(0,\infty)$.

For (b), we use same ideas given in the proof of \cite[Proposition 4.5.3]{Sayas2016}. Defining the curve $\Xi(s) := \{ z \in \C : |z-s| = \frac12 \Re s \}$ with positive orientation, we write
\[
\mr F'(s) = \frac1{2\pi i} \int_{\Xi(s)} \frac{\mr F(z)}{(z-s)^2} \mathrm d z.
\]
We finish the proof of (b) using the fact that $\frac12 |s| \leq |z|$ and $\frac12 \Re s \leq \Re z$ for $z \in \Xi(s)$.

To show (c), we write the following by using the Mean Value Theorem
\[
\| \mr F_\kappa (s) - \mr F(s)  \|_{X \to Y}  \leq  \| \mr F'( \lambda s_\kappa + (1-\lambda) s) \|_{X \to Y} \, |s_\kappa - s|,
\]
for some $\lambda \in (0,1)$. Now, we define $z(s) := \lambda s_\kappa + (1-\lambda) s$, and use (a) to write
\[
\| \mr F'(z) \|_{X \to Y} \leq \Theta_2 (\Re z) |z|^\mu.
\]
This can be bounded by observing
\[
\Re z \geq \min\{\Re s_\kappa, \Re s\}, \qquad |z| \geq |s| \, \Re\frac z s \geq |s| \min\left\{\Re\frac{s_\kappa}s,1 \right\},
\]
and then using Proposition \ref{prop:pap:3.2}(a) and (d). We finish the proof by using Proposition \ref{prop:pap:3.2}(c) with $m=1$ and noting that $E_1 \equiv D$.

\end{proof}
\begin{lemma}\label{lem:pap:4.2}
The following holds for all $\sigma > 0$, $\alpha > 1$, $\kappa \in (0,1]$ and $c > 0$:
\begin{enumerate}
\item[{\rm (a)}] 
$\ds \int_{|\sigma + i \omega| \geq c/\kappa} |\sigma + i \omega|^{-\alpha} \, \mathrm d \omega 
\leq 
\frac{2 \alpha}{\alpha - 1}  \left( \frac{\kappa}{c} \right)^{\alpha-1}.$
\item[{\rm (b)}] 
$\ds \int_{-\infty}^\infty |\sigma + i \omega|^{-\alpha} \, \mathrm d \omega 
\leq 
\frac{2}{\sigma^\alpha} + \frac{2}{\alpha - 1}.$
\end{enumerate}
\end{lemma}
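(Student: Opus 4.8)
The plan is to treat the two parts as elementary estimates on integrals of $|\sigma+i\omega|^{-\alpha}$, reducing everything to a single one-dimensional integral in the real variable $\omega$ and then using the monotonicity of $r\mapsto r^{-\alpha}$.

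For part (a), first I would note that $|\sigma+i\omega|=\sqrt{\sigma^2+\omega^2}\ge|\omega|$, so on the region $\{|\sigma+i\omega|\ge c/\kappa\}$ we certainly include the tail $\{|\omega|\ge c/\kappa\}$, but we want an upper bound, so I would instead bound $|\sigma+i\omega|^{-\alpha}\le|\omega|^{-\alpha}$ pointwise and then control the set of integration. The cleanest route: on the integration set we have $|\sigma+i\omega|\ge c/\kappa$, hence $|\sigma+i\omega|^{-\alpha}=|\sigma+i\omega|^{-1}\,|\sigma+i\omega|^{-(\alpha-1)}\le (\kappa/c)^{\alpha-1}|\sigma+i\omega|^{-1}$ is not integrable, so that split is too lossy. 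Better: write $|\sigma+i\omega|^{-\alpha}=|\sigma+i\omega|^{-2}\,|\sigma+i\omega|^{-(\alpha-2)}$ only if $\alpha>2$; to handle all $\alpha>1$ uniformly I would instead split the region into $\{\sigma\le|\omega|,\ |\sigma+i\omega|\ge c/\kappa\}$ where $|\sigma+i\omega|^{-\alpha}\le|\omega|^{-\alpha}$ and $|\omega|\ge c/(2\kappa)$ (since $|\omega|\ge|\sigma+i\omega|/\sqrt2\ge c/(\sqrt2\kappa)$), giving $2\int_{c/(\sqrt2\kappa)}^\infty\omega^{-\alpha}\,\dd\omega$; and the complementary region $\{|\omega|<\sigma\}$ contributes only when $\sqrt2\,\sigma\ge c/\kappa$, in which case the set has measure at most $2\sigma$ and the integrand there is at most $(c/\kappa)^{-\alpha}$. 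Summing these and using $\int_a^\infty\omega^{-\alpha}\,\dd\omega=a^{1-\alpha}/(\alpha-1)$ and $\sigma\le c/(\sqrt2\kappa)$ on the second piece yields a bound of the stated shape $\frac{2\alpha}{\alpha-1}(\kappa/c)^{\alpha-1}$ after absorbing the $\sqrt2$-factors into the $\alpha/(\alpha-1)$ constant. One should double-check the constant $2\alpha/(\alpha-1)$ is actually large enough to cover the chosen split; if not, a slightly cruder but cleaner split (e.g. directly using $|\sigma+i\omega|\ge\max\{\sigma,|\omega|\}$ and $|\sigma+i\omega|\ge c/\kappa$) can be tuned.

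For part (b), the integral over all of $\R$ I would split at $|\omega|=\sigma$: on $|\omega|\le\sigma$ bound $|\sigma+i\omega|\ge\sigma$ so the integrand is $\le\sigma^{-\alpha}$ and the contribution is $\le 2\sigma\cdot\sigma^{-\alpha}$, which is not quite $2/\sigma^\alpha$; instead bound $|\sigma+i\omega|\ge\sigma$ only to get $|\sigma+i\omega|^{-\alpha}\le\sigma^{-\alpha}$ is fine but I would rather use that on $|\omega|\le\sigma$ the interval has length $2\sigma$ and... actually the sharp version is: $\int_{|\omega|\le\sigma}|\sigma+i\omega|^{-\alpha}\dd\omega\le\int_{|\omega|\le\sigma}\sigma^{-\alpha}\dd\omega=2\sigma^{1-\alpha}$, which gives $2/\sigma^{\alpha-1}$, not $2/\sigma^\alpha$ — so I would instead bound $|\sigma+i\omega|\ge\sigma$ is too weak and should use the even cruder $\int_{|\omega|\le\sigma}|\sigma+i\omega|^{-\alpha}\dd\omega\le\int_{-\infty}^\infty(\sigma^2+\omega^2)^{-\alpha/2}\dd\omega$ only after a rescale $\omega=\sigma u$. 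Cleanest: substitute $\omega=\sigma u$, so $\int_\R|\sigma+i\omega|^{-\alpha}\dd\omega=\sigma^{1-\alpha}\int_\R(1+u^2)^{-\alpha/2}\dd u$, and bound $\int_\R(1+u^2)^{-\alpha/2}\dd u\le\int_{|u|\le1}1\,\dd u+\int_{|u|\ge1}|u|^{-\alpha}\dd u=2+\frac{2}{\alpha-1}$; this gives $\sigma^{1-\alpha}(2+\frac2{\alpha-1})$. To match the stated $\frac2{\sigma^\alpha}+\frac2{\alpha-1}$ I would then use $\sigma^{1-\alpha}\le\max\{\sigma^{-\alpha},1\}\cdot\text{(something)}$ — more precisely split the original integral without rescaling as $\int_{|\omega|\le1}+\int_{|\omega|\ge1}$, bounding the first by $\int_{|\omega|\le1}\sigma^{-\alpha}\dd\omega=2\sigma^{-\alpha}$ and the second by $\int_{|\omega|\ge1}|\omega|^{-\alpha}\dd\omega=\frac2{\alpha-1}$, which is exactly the claimed bound.

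The main obstacle here is purely bookkeeping of constants: there is no conceptual difficulty, only the need to pick the right cut points (at $|\omega|=c/\kappa$ versus at $\sigma$ versus at $1$) so that the elementary tail integral $\int_a^\infty\omega^{-\alpha}\dd\omega=\frac{a^{1-\alpha}}{\alpha-1}$ produces precisely the stated right-hand sides rather than something merely comparable. I expect part (a) to require the mild care described above (handling the sub-case where the disk of radius $c/\kappa$ is so large that part of the strip $|\omega|<\sigma$ is excluded), while part (b) is immediate from the split at $|\omega|=1$ together with $|\sigma+i\omega|\ge\sigma$ on $|\omega|\le1$ and $|\sigma+i\omega|\ge|\omega|$ on $|\omega|\ge1$.
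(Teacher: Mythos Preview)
Your treatment of part (b) is correct and coincides with the paper's argument: split at $|\omega|=1$, bound the integrand by $\sigma^{-\alpha}$ on $|\omega|\le 1$ and by $|\omega|^{-\alpha}$ on $|\omega|\ge 1$.

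For part (a), however, your argument has a genuine gap. You split at $|\omega|=\sigma$ and correctly observe that the piece $\{|\omega|<\sigma,\ |\sigma+i\omega|\ge c/\kappa\}$ is nonempty only when $\sqrt2\,\sigma\ge c/\kappa$. You then bound its contribution by $2\sigma\,(c/\kappa)^{-\alpha}$ and claim to use ``$\sigma\le c/(\sqrt2\,\kappa)$'' to control the factor $\sigma$. But that inequality points the wrong way: on this piece $\sigma\ge c/(\sqrt2\,\kappa)$, so there is no upper bound on $\sigma$ available, and $2\sigma\,(c/\kappa)^{-\alpha}$ cannot be bounded by a constant times $(\kappa/c)^{\alpha-1}$. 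If you repair this by using $|\sigma+i\omega|\ge\sigma$ instead (so the integrand is $\le\sigma^{-\alpha}$ and the contribution is $\le 2\sigma^{1-\alpha}\le 2(\sqrt2\,\kappa/c)^{\alpha-1}$), both pieces acquire an extra factor $2^{(\alpha-1)/2}$, and the total becomes $\frac{2\alpha}{\alpha-1}\,2^{(\alpha-1)/2}(\kappa/c)^{\alpha-1}$, which exceeds the stated bound for large $\alpha$.

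The paper avoids all of this by splitting at $|\omega|=c/\kappa$ rather than at $|\omega|=\sigma$. On $I^1=\{|\sigma+i\omega|\ge c/\kappa,\ |\omega|\le c/\kappa\}$ the integrand is $\le(\kappa/c)^{\alpha}$ and the measure is $\le 2c/\kappa$, giving $2(\kappa/c)^{\alpha-1}$; on $I^2=\{|\omega|\ge c/\kappa\}$ one simply bounds $|\sigma+i\omega|\ge|\omega|$ and integrates to get $\frac{2}{\alpha-1}(\kappa/c)^{\alpha-1}$. Summing yields exactly $\frac{2\alpha}{\alpha-1}(\kappa/c)^{\alpha-1}$, with no stray $\sqrt2$ factors.
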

\begin{proof}
In order to prove (a), for fixed $\sigma, c$ and $\kappa$, we define the domains of integration
$I^1 :=  \left\{ \omega \in \R: |\sigma + i \omega| \geq  c/ \kappa, |\omega| \leq  c/ \kappa \right\}$
and 
$I^2 := \{ \omega \in \R: |\omega| \geq  c / \kappa \}$, 
which give
\[
\int_{|\sigma + i \omega| \geq c/\kappa} |\sigma + i \omega|^{-\alpha} \, \mathrm d \omega
		=
		\int_{I^1} |\sigma + i \omega|^{-\alpha} \, \mathrm d \omega
		+
		\int_{I^2} |\sigma + i \omega|^{-\alpha} \, \mathrm d \omega.
\]
We bound the first integral using the fact that $|\sigma + i \omega|^{-\alpha} \leq  (c/\kappa)^{-\alpha}$ on $I^1$. We rewrite the second integral using a change of variables and bound it in the following way
\[
\kappa^{\alpha-1} \int_{|\omega| \geq c} |\sigma + i \omega|^{-\alpha} \, \mathrm d \omega 
\leq 
\kappa^{\alpha-1} \int_{|\omega| \geq c} |\omega|^{-\alpha} \, \mathrm d \omega = \frac2{\alpha - 1} \left( \frac\kappa c \right)^{\alpha -1},
\]
which finishes the proof of (a). We prove (b) by simply writing
\[
 \int_{-\infty}^\infty |\sigma + i \omega|^{-\alpha} \, \mathrm d \omega 
		 \leq 
		 2 \int_0^1 \sigma^{-\alpha} \, \mathrm d \omega + 2 \int_1^\infty \omega^{-\alpha} \, \mathrm d \omega
		  =
		 \frac2 {\sigma^\alpha} + \frac2{\alpha-1}.
\]
\end{proof}

\begin{proposition}\label{prop:pap:4.3}
If $ \mr F$ satisfies \eqref{eq:pap:1.1}-\eqref{eq:pap:1.2} with $-1 < \mu \leq 0$ and $\alpha = \floor{\mu + 5}$, then for all $\sigma > 0$ we have
	\[
		\int_{-\infty}^\infty  \| \mathrm F_\kappa (\sigma + i \omega) - \mathrm F(\sigma + i \omega) \|_{X \to Y} \, |\sigma + i \omega|^{-\alpha} \, \mathrm d \omega 
		\leq 
		\kappa^2 \, C_\mu^1 \, C_2(\sigma),
	\]
where
\[
C_2(x) :=  \frac {C_F(\tfrac14 \min \{x,1\} )} {\min \{ x^{2 + \delta}, 1\} },  \qquad \delta := \floor\mu - \mu + 1 \in (0,1],
\] 
and $C_{\mu}^1$ is a positive constant depending only on $\mu$.
\end{proposition}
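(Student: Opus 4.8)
The plan is to split the integration region into $I^1 := \{\omega : |\sigma + i\omega| \le c_0/\kappa\}$ and $I^2 := \{\omega : |\sigma + i\omega| \ge c_0/\kappa\}$, where $c_0 \in (0,\pi)$ is the constant from the previous section, and estimate the two pieces separately. On $I^1$ we are inside the ball $B(0; c_0/\kappa)$ where Proposition \ref{prop:pap:4.1}(c) applies, so the integrand is bounded by $\kappa^2 \Theta_2(\tfrac12\min\{\sigma,1\})\,\Theta_3(|\kappa s|)\,|s|^{\mu+3}\,|s|^{-\alpha}$. On $I^2$ we use the triangle inequality together with the crude bounds $\|\mr F(s)\|_{X\to Y} \le C_F(\sigma)|s|^\mu$ from \eqref{eq:pap:1.2} and $\|\mr F_\kappa(s)\|_{X\to Y} \le \Theta_1(\sigma)$ from Proposition \ref{prop:pap:4.1}(a).

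For the $I^1$ piece, the key point is that $\Theta_3$ is increasing on $(0,c_0)$ and $|\kappa s| \le c_0$ there, but $\Theta_3(c_0)$ is infinite when $\mu \neq 0$, so I cannot simply bound $\Theta_3(|\kappa s|)$ by a constant at the endpoint. Instead I would shrink the region: pick any fixed $c \in (0, c_0)$ and split $I^1$ further into $\{|\kappa s| \le c\}$, where $\Theta_3(|\kappa s|) \le \Theta_3(c)$ is a genuine constant, and the annulus $\{c \le |\kappa s| \le c_0\}$; on the latter annulus $\Theta_3$ is still bounded by $\Theta_3(c_0^-)$ only when $\mu = 0$. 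The cleaner route, and the one I expect the paper takes, is to handle the full region $\{|\kappa s| \le c_0\}$ by bounding $\Theta_3(|\kappa s|)(1-|\kappa s|^2 D(|\kappa s|))$-type quantities, or more simply to note that on $I^1$ restricted to $|\kappa s|\le c$ we get a clean constant $\Theta_3(c)$, and then optimize over $c$ at the end. The remaining integral $\int_{I^1}|s|^{\mu+3-\alpha}\,\dd\omega$ converges near infinity precisely because $\alpha = \floor{\mu+5} \ge \mu + 4 > \mu + 3$ makes the exponent $\mu+3-\alpha < -1$; Lemma \ref{lem:pap:4.2}(b) bounds it by $2\sigma^{\mu+3-\alpha} + 2/(\alpha-\mu-4)$. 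Here $\alpha - \mu - 4 = \floor{\mu+5} - (\mu+5) + 1 = \delta \in (0,1]$, which explains where the exponent $2+\delta$ in $C_2$ comes from once we also track the $\Theta_2$ factor $\sim \min\{\sigma,1\}^{-1}$.

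For the $I^2$ piece, the integrand is at most $(C_F(\sigma)|s|^\mu + \Theta_1(\sigma))|s|^{-\alpha}$. Since $\mu \le 0$ we have $|s|^\mu \le (c_0/\kappa)^\mu \le$ a $\kappa$-power times a constant on $I^2$ — but to match the $\kappa^2$ in the statement I want to use $|s|^{-\alpha} \le (c_0/\kappa)^{-(\alpha - \gamma)}|s|^{-\gamma}$ for a suitable $\gamma$ and invoke Lemma \ref{lem:pap:4.2}(a). Concretely, writing $|s|^{\mu - \alpha} = |s|^{-(\alpha - \mu)}$ and applying Lemma \ref{lem:pap:4.2}(a) with exponent $\alpha - \mu > 1$ gives a factor $(\kappa/c_0)^{\alpha - \mu - 1}$; since $\alpha - \mu - 1 = \floor{\mu+5} - \mu - 1 \ge 2$ (indeed it equals $\delta + 3 \ge 3$... more precisely $\alpha-\mu-1 = (\mu+4+\delta)-\mu-1 = 3+\delta > 2$), this easily absorbs the required $\kappa^2$. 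The $\Theta_1(\sigma)|s|^{-\alpha}$ term is handled the same way with exponent $\alpha > 1$, and again $\alpha - 1 = 4 + \delta - (1-\delta)$... in any case $\alpha - 1 \ge 4 > 2$. Both $C_F(\sigma)$, $\Theta_1(\sigma)$, $\Theta_2(\tfrac12\min\{\sigma,1\})$ are then bounded by $C_F(\tfrac14\min\{\sigma,1\})$ times a negative power of $\min\{\sigma,1\}$ using monotonicity of $C_F$ and the explicit forms of $\Theta_1,\Theta_2$.

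Collecting the two pieces, every term carries a factor $\kappa^2$, a factor $C_F(\tfrac14\min\{\sigma,1\})$ (after using that $C_F$ is non-increasing to replace the various arguments $\sigma$, $\tfrac12\sigma$, $\tfrac12\min\{\sigma,1\}$ by the common smaller argument $\tfrac14\min\{\sigma,1\}$), and a negative power of $\min\{\sigma,1\}$; the worst such power is $2+\delta$, coming from the $I^1$ term where $\Theta_2$ contributes $\min\{\sigma,1\}^{-1}$ and the tail integral from Lemma \ref{lem:pap:4.2}(b) contributes $\sigma^{\mu+3-\alpha} = \sigma^{-(1+\delta)}$ when $\sigma \le 1$. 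This gives exactly $C_2(\sigma)$. The main obstacle I anticipate is the endpoint behavior of $\Theta_3$ near $c_0$ when $\mu < 0$: one must either work with a fixed $c < c_0$ strictly inside (paying a harmless $c$-dependent constant that gets optimized away) or exploit a cancellation in $\Theta_3(|\kappa s|)\cdot(\text{something vanishing at }c_0)$; the bookkeeping that all constants depend only on $\mu$ (not on $\sigma$ or $\kappa$) is where I would be most careful.
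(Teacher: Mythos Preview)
Your proposal is correct and follows essentially the same approach as the paper. The paper resolves your anticipated obstacle exactly as you suggest in your ``or more simply'' remark: it never splits at $c_0$ at all, but fixes an arbitrary $c\in(0,c_0)$ from the outset, defines $I^1=\{|\kappa s|\le c\}$ and $I^2=\{|\kappa s|\ge c\}$, bounds $\Theta_3(|\kappa s|)\le\Theta_3(c)$ on $I^1$, and at the end minimizes the resulting constant $e_\mu^1\,\Theta_3(c)+e_\mu^2\,c^{1-\alpha}$ over $c\in(0,c_0)$. On $I^2$ the paper is slightly more economical than your sketch: it bounds $|s|^\mu\le\sigma^\mu$ uniformly (since $\mu\le0$) so that both the $\mr F$ and $\mr F_\kappa$ terms contribute only $|s|^{-\alpha}$, and a single application of Lemma~\ref{lem:pap:4.2}(a) with exponent $\alpha$ gives $(\kappa/c)^{\alpha-1}\le\kappa^2$ since $\alpha-1\ge3$.
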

\begin{proof}
For fixed $\sigma, \kappa$ and $c \in (0,c_0)$ we will make use of the following domains of integration
\[
	I^1 :=  \left\{ \omega \in \R : | \sigma + i \omega | \leq c / \kappa \right\}, \qquad 
		I^2 :=  \left\{ \omega \in \R : | \sigma + i \omega | \geq  c / \kappa \right\},
\]
and the notation $s = s(\omega) := \sigma + i \omega$. Using these we can write
\begin{align*}
	\int_{-\infty}^\infty  \| \mr F_\kappa (s) - \mr F(s) \|_{X \to Y} \, |s|^{-\alpha} \, \dd \omega
		& \leq
		\int_{I^1}  \| \mr F_\kappa (s) - \mr F(s) \|_{X \to Y} \, |s|^{-\alpha} \, \dd \omega \nonumber \\
		& \quad + \int_{I^2}  \left(  \| \mr F_\kappa (s) \|_{X \to Y} +  \| \mr F(s) \|_{X \to Y}   \right) \, |s|^{-\alpha} \, \dd \omega.
\end{align*}
We bound the first integral on the right-hand side using Proposition \ref{prop:pap:4.1}(c) and Lemma \ref{lem:pap:4.2}(b)
\begin{align*}
	\int_{I^1}  \| \mr F_\kappa (s) - \mr F(s) \|_{X \to Y} \, |s|^{-\alpha} \, \dd \omega
	& \leq
	\kappa^2 \Theta_2 ( \min\{\sigma,1\}/2) \Theta_3 (c)  \left( \frac2{\sigma^{\alpha-\mu-3}} + \frac2{\alpha-\mu-4} \right)\\
	& \leq
	\kappa^2 C_F(\min\{\sigma, 1\}/4 ) \frac{e^1_\mu \, \Theta_3 (c)}{\min\{\sigma^{2+\delta},1\}},
\end{align*}
where $e^1_\mu$ is a positive constant depending only on $\mu$. Next, with the help of Proposition \ref{prop:pap:4.1}(a) and Lemma \ref{lem:pap:4.2}(a), the second integral is bounded in the following way
\begin{align*}
	\int_{I^2} \left( \| \mr F_\kappa (s) \|_{X \to Y} + \| \mr F(s) \|_{X \to Y} \right) \, |s|^{-\alpha} \, \dd \omega 
		& \leq
		\left( \Theta_1 (\sigma) + C_F(\sigma) \sigma^\mu \right)\frac{2 \alpha}{\alpha - 1}  \left( \frac{\kappa}{c} \right)^{\alpha-1} \nonumber \\
		& \leq
		\kappa^2 C_F(\min\{\sigma,1\}/2 ) \frac{e^2_\mu \, c^{1-\alpha}}{\min\{\sigma,1\} }.
\end{align*}
Here $e^2_\mu$ is a positive constant depending only on $\mu$. Combining these estimates we write
\[
\int_{-\infty}^\infty  \| \mr F_\kappa (s) - \mr F(s) \|_{X \to Y} \, |s|^{-\alpha} \, \dd \omega
		\leq
		\kappa^2 C_F(\min\{\sigma, 1\}/4 ) \frac{C_{\mu,c}}{\min\{\sigma^{2+\delta},1\}},
\]
with $C_{\mu,c} := e^1_\mu \, \Theta_3 (c) + e^2_\mu \, c^{1-\alpha}$. Note that this estimate holds for all $c \in (0,c_0)$, therefore we finish the proof by replacing the constant $C_{\mu,c}$ with $C_\mu^1 := \ds \min_{c \, \in (0,c_0)} C_{\mu,c}$.
\end{proof}

\begin{proposition}\label{prop:pap:4.4} If $\mr F$ satisfies \eqref{eq:pap:1.1}-\eqref{eq:pap:1.2} with $-1<\mu\le0$, and $g \in W_+^\alpha(\R;X)$ with $\alpha := \floor{\mu + 5}$, then
\[
\| (f_\kappa - f) * g(t) \|_Y \leq \kappa^2 \, C_\mu^2 \, C_2(t^{-1}) \int_0^t \| g^{(\alpha)} (\tau) \|_X \mathrm d \tau
\]
holds for all $t \geq 0$, where $C_\mu^2$ is a positive constant depending only on $\mu$.
\end{proposition}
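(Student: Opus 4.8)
The plan is to combine the frequency‑domain estimate of Proposition \ref{prop:pap:4.3} with the Laplace inversion formula, after first localizing $g$ in time so that the integral in the conclusion runs only over $[0,t]$. Write $\mr G:=\mc L\{g\}$. Since $\mr F$ satisfies \eqref{eq:pap:1.1}--\eqref{eq:pap:1.2} and $\mr F_\kappa$ satisfies the corresponding polynomial bound by Proposition \ref{prop:pap:4.1}(a), both $f$ and $f_\kappa$ are Laplace transforms of causal tempered $\mc B(X,Y)$-valued distributions, and for every $\sigma>0$
\[
(f_\kappa-f)*g(t)=\frac1{2\pi i}\int_{\sigma-i\infty}^{\sigma+i\infty} e^{st}\big(\mr F_\kappa(s)-\mr F(s)\big)\mr G(s)\,\dd s ,
\]
the integral converging absolutely because $\|\mr G(\sigma+i\omega)\|_X\le|\sigma+i\omega|^{-\alpha}\int_0^\infty\|g^{(\alpha)}(\tau)\|_X\dd\tau$ (here $s^\alpha\mr G(s)=\mc L\{g^{(\alpha)}\}(s)$ and $g\in W_+^\alpha$) together with Proposition \ref{prop:pap:4.3}.

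The case $t=0$ is immediate: the left-hand side vanishes since $f_\kappa-f$ is causal and $g$ is causal and continuous with $g^{(j)}(0)=0$ for $j\le\alpha-1$, while the right-hand side vanishes because the integral is over $[0,0]$ and $C_2(t^{-1})$ stays bounded as $t\to0^+$. So fix $t>0$. Because $f_\kappa-f$ is supported in $[0,\infty)$, the value $(f_\kappa-f)*g(t)$ depends only on $g|_{(-\infty,t]}$; hence I may replace $g$ by $\widetilde g\in W_+^\alpha(\R;X)$ that coincides with $g$ on $(-\infty,t]$ and equals its degree-$(\alpha-1)$ Taylor polynomial at $t$ on $(t,\infty)$. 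Then $\widetilde g\in\mc C^{\alpha-1}(\R;X)$, $\widetilde g\equiv0$ on $(-\infty,0)$, and $\widetilde g^{(\alpha)}=g^{(\alpha)}\chi_{[0,t)}\in L^1(\R;X)$, so $\widetilde g\in W_+^\alpha$ and $(f_\kappa-f)*\widetilde g(t)=(f_\kappa-f)*g(t)$. With $\widetilde{\mr G}:=\mc L\{\widetilde g\}$ and $s^\alpha\widetilde{\mr G}(s)=\mc L\{\widetilde g^{(\alpha)}\}(s)$ we get, for $\Re s=\sigma>0$,
\[
\|\widetilde{\mr G}(s)\|_X\le|s|^{-\alpha}\int_0^t e^{-\sigma\tau}\|g^{(\alpha)}(\tau)\|_X\,\dd\tau\le|s|^{-\alpha}\int_0^t\|g^{(\alpha)}(\tau)\|_X\,\dd\tau .
\]

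Inserting this bound into the inversion formula (with $g$ replaced by $\widetilde g$), pulling the $\omega$-independent factor $\int_0^t\|g^{(\alpha)}\|_X$ out, and estimating the remaining frequency integral by Proposition \ref{prop:pap:4.3}, I obtain
\[
\|(f_\kappa-f)*g(t)\|_Y\le\frac{e^{\sigma t}}{2\pi}\bigg(\int_{-\infty}^\infty\|\mr F_\kappa(\sigma+i\omega)-\mr F(\sigma+i\omega)\|_{X\to Y}\,|\sigma+i\omega|^{-\alpha}\,\dd\omega\bigg)\int_0^t\|g^{(\alpha)}(\tau)\|_X\,\dd\tau
\]
\[
\le\frac{e^{\sigma t}}{2\pi}\,\kappa^2\,C_\mu^1\,C_2(\sigma)\int_0^t\|g^{(\alpha)}(\tau)\|_X\,\dd\tau .
\]
Choosing $\sigma=1/t$ gives $e^{\sigma t}=e$ and $C_2(\sigma)=C_2(t^{-1})$, which is exactly the claim with $C_\mu^2:=\tfrac{e}{2\pi}C_\mu^1$.

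The routine parts are the Laplace‑inversion bookkeeping and the verification that $\widetilde g\in W_+^\alpha$. The one genuinely delicate point is the localization step: the conclusion must carry $\int_0^t$ rather than $\int_0^\infty$, and this cannot be extracted from a plain abscissa‑shift estimate because $g^{(\alpha)}$ is uncontrolled beyond $t$; it is the causality of $f_\kappa-f$ (so the output at time $t$ ignores $g$ after $t$), combined with the smooth polynomial extension of $g|_{(-\infty,t]}$, that supplies it. The final choice $\sigma=1/t$ is what turns the a priori $\kappa^2\,C_2(\sigma)$ frequency bound into the stated polynomial‑in‑$t$ (never exponential) dependence, which is the whole point of the estimate.
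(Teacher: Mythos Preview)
Your proof is correct and follows essentially the same route as the paper: Laplace inversion, bound $\|\mr G(s)\|_X\le |s|^{-\alpha}\int\|g^{(\alpha)}\|_X$, apply Proposition~\ref{prop:pap:4.3}, choose $\sigma=1/t$, and localize via the degree-$(\alpha-1)$ Taylor extension at $t$ to convert $\int_0^\infty$ into $\int_0^t$. The only cosmetic difference is that you localize before applying the inversion bound whereas the paper first derives the $\int_0^\infty$ estimate and then applies it to the extended function; your identification $C_\mu^2=\tfrac{e}{2\pi}C_\mu^1$ also matches the paper's detailed version.
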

\begin{proof}
For any $\sigma > 0$ and $t>0$, using the inverse Laplace transformation, we write
\[
\|  (f_\kappa - f) * g(t) \|_Y 
		\leq 
		\frac{e^{\sigma t}}{2\pi} \sup_{\Re s = \sigma} \| s^\alpha \mc L \{ g \}(s)  \|_X  \!\!
				\int_{-\infty}^\infty \| (\mr F_\kappa- \mr F)(\sigma + i \omega)\|_{X \to Y} \, | \sigma + i \omega |^{-\alpha} \, \dd \omega.
\]
Next, the definition of Laplace transformation together with Proposition \ref{prop:pap:4.3} and setting $\sigma  = t^{-1}$ give
\[
\|  (f_\kappa - f) * g(t) \|_Y 
		\leq 
		\kappa^2 \, C^2_\mu \, C_2(t^{-1}) \int_0^\infty \| g^{(\alpha)} (\tau) \|_X \mathrm d \tau,
\]
Now, we are going to obtain an integral bound over the interval $(0,t)$. For a fixed $t > 0$, we define the following function
	\begin{align*}
		p(\tau) := \left\{
		\begin{array}{ll}
		g(\tau), & \tau \leq t,\\[0.5em]
		\ds \sum_{\ell = 0}^{\alpha-1} \frac{(\tau - t)^\ell}{\ell !} g^{(\ell)}(t), & \tau \geq t.
		\end{array}\right.
	\end{align*}
It is not hard to see that $p \in W^{\alpha}_+(\R;X)$, in other words, $p$ satisfies the conditions of the proposition. Using the fact that $p \equiv g$ on $(-\infty,t)$ and $p^{(\alpha)} \equiv 0$ on $(t,\infty)$, we write
	\begin{align*}
		\|  (f_\kappa - f) * g(t) \|_Y = \|  (f_\kappa - f) * p(t) \|_Y
		& \leq 
		\kappa^2 \, C^2_\mu \, C_2(t^{-1}) \int_0^\infty \| p^{(\alpha)} (\tau) \|_X \, \dd \tau \nonumber \\
		& = 
		\kappa^2 \, C^2_\mu \, C_2(t^{-1})  \int_0^t \| g^{(\alpha)} (\tau) \|_X \, \dd \tau,
	\end{align*}
which finishes the proof.
\end{proof}

\section{Proof of Theorem \ref{thm:pap:2.1}}
In this section we prove the main theorem. We start with presenting a lemma to obtain upper-bounds integrated over the interval $(0,t)$ rather than $(0,\infty)$ in the proof of the main theorem.
\begin{lemma}\label{lem:pap:5.1}
Let $m\geq0$, $g \in W^{2m+4}_+(\R; X)$, $h := e^\punto g^{(m+4)}$, and $t > 0$ be a fixed real number. We define the function
	\[
		\R \ni \omega \longmapsto  j(\omega) := e^{-\omega} \ds \sum_{\ell = 0}^{m-1} \frac{(\omega - t)^\ell}{\ell !} h^{(\ell)}(t),
	\]
and, for $n \geq 1$, the integration operator
	\[
		(\partial^{-n} f)(\tau) := \int_t^\tau \int_t^{\omega_1} \cdots \int_t^{\omega_{n-1}} f(\omega_n) \, \mr d \omega_n \, \cdots \,\mr d \omega_2 \,\mr d \omega_1.
	\]
The function
	\[
		p(\tau) := \left\{
		\begin{array}{ll}
		g(\tau), 	& 	\tau \leq t,\\[0.5em]
		\ds \sum_{\ell = 0}^{m+3} \frac{(\tau - t)^\ell}{\ell !} g^{(\ell)}(t)
			 + (\partial^{-m-4} j) (\tau), 	& 	\tau \geq t,
		\end{array}\right.
	\]
satisfies that $p \in W^{2m+4}_+ (\R;X)$ and $\mc P_m p^{(m+4)} \equiv 0$ on $(t, \infty)$.
\end{lemma}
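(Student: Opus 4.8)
The plan is to verify directly the three requirements in the definition of $W_+^{2m+4}(\R;X)$ for $p$ --- causality, $\mathcal C^{2m+3}$-regularity on all of $\R$, and $p^{(2m+4)}\in L^1(\R;X)$ --- and then to compute $\mathcal P_m p^{(m+4)}$ on $(t,\infty)$. First I would record two elementary facts that carry the whole argument. Writing $T(\omega):=\sum_{\ell=0}^{m-1}\tfrac{(\omega-t)^\ell}{\ell!}h^{(\ell)}(t)$ for the degree-$(m-1)$ Taylor polynomial of $h$ at $t$, we have $j=e^{-\punto}T$, so $e^{\punto}j=T$ is a polynomial of degree $\le m-1$; moreover $g\in W_+^{2m+4}(\R;X)$ forces $g^{(m+4)}\in\mathcal C^{m-1}(\R;X)$, hence $h\in\mathcal C^{m-1}(\R;X)$, so that $T$ (and therefore $j$) is well defined and of class $\mathcal C^\infty$. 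The second fact is that the iterated antiderivative satisfies $(\partial^{-n}f)^{(i)}(t)=0$ for $0\le i\le n-1$ and $(\partial^{-n}f)^{(n)}=f$, which follows by induction from $\partial^{-n}f=\partial^{-1}(\partial^{-(n-1)}f)$. (For $m=0$ the sum defining $j$ is empty, $j\equiv0$, $T\equiv0$, and everything below reduces to routine facts about Taylor polynomials.)

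Causality is immediate, since $p\equiv g$ on $(-\infty,t]$ and $g\equiv0$ on $(-\infty,0)$. For regularity, $p=g\in\mathcal C^{2m+3}$ on $(-\infty,t)$, while on $(t,\infty)$ the function $p$ is a polynomial plus $\partial^{-m-4}j\in\mathcal C^\infty$, hence smooth there; so the only issue is matching one-sided derivatives at $\tau=t$ up to order $2m+3$. Using the two facts above: for $0\le k\le m+3$ the polynomial in the definition of $p$ contributes $g^{(k)}(t)$ and $(\partial^{-m-4}j)^{(k)}(t)=0$, so $p^{(k)}(t^+)=g^{(k)}(t)=p^{(k)}(t^-)$; for $m+4\le k\le 2m+3$ the polynomial contributes $0$ and $(\partial^{-m-4}j)^{(k)}(t)=j^{(k-m-4)}(t)$, so the matching reduces to showing $j^{(i)}(t)=g^{(m+4+i)}(t)$ for $0\le i\le m-1$.

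That last identity is the step I expect to be the main (and essentially only) obstacle, and I would handle it by noting that multiplication by the nowhere-vanishing smooth scalar $e^{\punto}$ preserves the order of contact of two functions at a point. Indeed $e^{\punto}j=T$ is, by construction, the Taylor polynomial of $h=e^{\punto}g^{(m+4)}$ of degree $m-1$ at $t$, so $e^{\punto}j$ and $e^{\punto}g^{(m+4)}$ share all derivatives at $t$ up to order $m-1$; applying the Leibniz rule to $(e^{\punto}w)^{(\ell)}(t)$ with $w:=j-g^{(m+4)}$ and inducting on $\ell$ then gives $w^{(\ell)}(t)=0$ for $0\le\ell\le m-1$, which is the claim. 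Granting this, $p\in\mathcal C^{2m+3}(\R;X)$, and $p^{(2m+4)}$ equals $g^{(2m+4)}\in L^1$ on $(-\infty,t)$ and $j^{(m)}=(e^{-\punto}T)^{(m)}$ on $(t,\infty)$, the latter being $e^{-\punto}$ times a polynomial and hence integrable on $(t,\infty)$; thus $p^{(2m+3)}$ is absolutely continuous, $p^{(2m+4)}\in L^1(\R;X)$, and $p\in W_+^{2m+4}(\R;X)$.

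For the last assertion, on $(t,\infty)$ the degree-$(m+3)$ polynomial in the definition of $p$ has vanishing $(m+4)$-th derivative while $(\partial^{-m-4}j)^{(m+4)}=j$, so $p^{(m+4)}=j$ there, and therefore
\[
\mathcal P_m p^{(m+4)}=\mathcal P_m j=e^{-\punto}\bigl(e^{\punto}j\bigr)^{(m)}=e^{-\punto}T^{(m)}\equiv0\qquad\text{on }(t,\infty),
\]
since $\deg T\le m-1<m$. This completes the plan; beyond the order-of-contact identity, everything is bookkeeping of the derivatives at $\tau=t$.
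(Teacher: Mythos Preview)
Your proof is correct and follows essentially the same route as the paper: both verify $p\in\mathcal C^{m+3}$ from the vanishing of the iterated integral at $\tau=t$, then obtain $p\in\mathcal C^{2m+3}$ by matching $p^{(m+4)}$ across $t$, and conclude with the observation that $e^{\punto}p^{(m+4)}$ is a polynomial of degree $\le m-1$ on $(t,\infty)$. The only cosmetic difference is that the paper packages your order-of-contact step by introducing the auxiliary function $q$ (equal to $h$ on $(-\infty,t]$ and to its Taylor polynomial $T$ on $[t,\infty)$) and noting directly that $q\in\mathcal C^{m-1}$ implies $p^{(m+4)}=e^{-\punto}q\in\mathcal C^{m-1}$, whereas you spell out the same fact via the Leibniz rule; the content is identical.
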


\begin{proof}
We observe that, for $0 \leq k \leq m+3$, the functions
\[
\R \ni \tau \longmapsto ( \partial^k \, \partial^{-m-4} \, j )(\tau) = (\partial^{-m-4 + k} \, j )(\tau)
\]
vanish when $\tau = t$. From there, it is not hard to see that $p \in \mc C^{m+3} (\R;X)$. Next, since $h \in \mc C^{m-1}(\R;X)$, we know that
\[
q(\tau) := \left\{
		\begin{array}{ll}
		h(\tau), 	& 	\tau \leq t,\\[0.5em]
		\ds \sum_{\ell = 0}^{m-1} \frac{(\tau - t)^\ell}{\ell !} h^{(\ell)}(t), 	& 	\tau \geq t,
		\end{array}\right.
\]
is also in $\mc C^{m-1}(\R;X)$, and so is $e^{- \punto} q = p^{(m+4)}$. This shows that $p \in \mc C^{2m+3} (\R;X)$. The function $j^{(m)} \in L^1(t,\infty;X)$ and therefore  $p^{(2m+4)} \in L^1 (\R;X)$.
The rest of the proof follows from the fact that
\[
( \mc P_m p^{(m+4)} )(\tau)  =
		e^{-\tau} \frac{\mr d^m}{\mr d\tau^m} \left(  \sum_{\ell = 0}^{m-1} \frac{(\tau - t)^\ell}{\ell !} h^{(\ell)}(t)  \right) (\tau) = 0 \qquad \forall \tau \in (t,\infty).
\]
\end{proof}

\begin{proposition}\label{prop:pap:3.4}
Let $g \in W_+^{2m+4}(\R;X)$ with $m \geq 0$, and $\mr F$ satisfy \eqref{eq:pap:1.1}-\eqref{eq:pap:1.2}. We define $(\partial_t^\kappa)^m g$ such that
\[
\mc L \{ (\partial_t^\kappa)^m g \}(s) := s_\kappa^m \mr G(s), 
	\qquad \mr G(s) := \mathcal L\{g\} (s).
\]
The following estimate holds for all $t > 0$
\[
 \| f * (  (\partial_t^\kappa)^m g - g^{(m)}  ) (t) \|_Y
 \leq 
 \kappa^2 C_1(t^{-1}) \sup_{\Re s = t^{-1}}  \| \mr F (s) \|_{X \to Y} \int_0^t  \| \mc P_m g^{(m+4)} (\tau) \|_X \mr d \tau,
\]
where
\[
C_1(x) := \frac{C_m}{x  \min \{x^m,1\} },
\]
and $C_m$ is a positive constant depending only on $m$.
\end{proposition}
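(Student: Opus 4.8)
The plan is to combine the standard inverse-Laplace-transform estimate with Proposition~\ref{prop:pap:3.4a}, and then to localize the resulting time integral by means of Lemma~\ref{lem:pap:5.1}, exactly in the spirit of the proof of Proposition~\ref{prop:pap:4.4}. First I would identify the transform: with $w := (\partial_t^\kappa)^m g - g^{(m)}$, its Laplace transform is $(s_\kappa^m - s^m)\mr G(s) = \mr H(s)$, the function appearing in Proposition~\ref{prop:pap:3.4a}, so $\mc L\{f*w\}(s) = \mr F(s)\mr H(s)$. Since $g\in W_+^{2m+4}(\R;X)$, one has $\mr G(s) = s^{-(2m+4)}\mc L\{g^{(2m+4)}\}(s)$, which decays like $|s|^{-(2m+4)}$ on vertical lines, while $|s_\kappa^m - s^m|$ grows at most like $|s|^m$ there by Proposition~\ref{prop:pap:3.2}(b),(c); together with \eqref{eq:pap:1.2} this shows that $\omega\mapsto \mr F(\sigma+i\omega)\mr H(\sigma+i\omega)$ is absolutely integrable for each $\sigma>0$ (the interesting range being $\mu\le 0$, where also $\sup_{\Re s=\sigma}\|\mr F(s)\|_{X\to Y}<\infty$), so $f*w$ is a continuous $Y$-valued function given by the Bromwich integral.

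For any $\sigma>0$ I would then write
\[
\| (f*w)(t)\|_Y
	\le \frac{e^{\sigma t}}{2\pi}\int_{-\infty}^\infty \|\mr F(\sigma+i\omega)\|_{X\to Y}\,\|\mr H(\sigma+i\omega)\|_X\,\dd\omega
	\le \frac{e^{\sigma t}}{2\pi}\Big(\sup_{\Re s=\sigma}\|\mr F(s)\|_{X\to Y}\Big)\int_{-\infty}^\infty \|\mr H(\sigma+i\omega)\|_X\,\dd\omega,
\]
bound the last integral with Proposition~\ref{prop:pap:3.4a}, and take $\sigma = t^{-1}$, so that $e^{\sigma t}=e$ is absorbed into the constant. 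This produces the asserted estimate with the integral extended over $(0,\infty)$, with $C_1(x) = C_m/(x\min\{x^m,1\})$ and $C_m = (e/2\pi)\,C_m^1$.

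It remains to shrink $(0,\infty)$ to $(0,t)$. I would replace $g$ by the function $p$ of Lemma~\ref{lem:pap:5.1}, which belongs to $W_+^{2m+4}(\R;X)$ and coincides with $g$ on $(-\infty,t]$. Because $f$ is causal and $(\partial_t^\kappa)^m$ is a causal discrete convolution operator — for fixed $\kappa$, $s_\kappa^m=\kappa^{-m}\delta(e^{-\kappa s})^m$ is a convergent series $\sum_{j\ge 0} d_j\,e^{-j\kappa s}$, so its inverse transform is supported on $[0,\infty)$ — the value of $f*((\partial_t^\kappa)^m g - g^{(m)})$ at time $t$ depends only on $g|_{(-\infty,t]}$ and is unchanged under this replacement. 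Applying the $(0,\infty)$ estimate to $p$, and using that $\mc P_m$ is a local differential operator so that $\mc P_m p^{(m+4)}\equiv 0$ on $(t,\infty)$ (Lemma~\ref{lem:pap:5.1}) while $\mc P_m p^{(m+4)} = \mc P_m g^{(m+4)}$ on $(0,t)$, the integral collapses to $\int_0^t\|\mc P_m g^{(m+4)}(\tau)\|_X\,\dd\tau$, which is the claim.

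The main obstacle is this localization step: one needs to be sure that the discrete TR derivative reads $g$ only up to the current time, and that convolution by causal distributions preserves this locality, i.e. the restriction of $u*v$ to $(-\infty,t]$ depends only on $u|_{(-\infty,t]}$ and $v|_{(-\infty,t]}$. Both are standard properties of causal tempered distributions (see \cite[Ch.~3]{Sayas2016}); everything else is a routine concatenation of the inverse-transform bound, Proposition~\ref{prop:pap:3.4a}, and Lemma~\ref{lem:pap:5.1}.
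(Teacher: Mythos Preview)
Your proposal is correct and follows essentially the same route as the paper: bound the Bromwich integral by pulling out $\sup_{\Re s=\sigma}\|\mr F(s)\|_{X\to Y}$, apply Proposition~\ref{prop:pap:3.4a} to the remaining integral of $\|\mr H\|_X$, set $\sigma=t^{-1}$, and then localize the time integral by replacing $g$ with the function $p$ of Lemma~\ref{lem:pap:5.1}. You spell out the causality justification for the localization step more explicitly than the paper (which simply asserts the equality $\|f*((\partial_t^\kappa)^m g - g^{(m)})(t)\|_Y = \|f*((\partial_t^\kappa)^m p - p^{(m)})(t)\|_Y$), but the argument is otherwise identical.
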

\begin{proof}
For any $\sigma >0$, using the inverse Laplace transformation, we write
\[
 \| f * (  (\partial_t^\kappa)^m g - g^{(m)}  ) (t) \|_Y
 \leq 
\frac{e^{\sigma t}}{2 \pi} \sup_{\Re s = \sigma}  \| \mr F (s) \|_{X \to Y} \int_{-\infty}^\infty  \| \mr H (\sigma + i \omega) \|_X \mr d \omega,
\]
where $\mr H(s) := s_\kappa^m \mr G(s) - s^m \mr G(s)$. 
Here, with the help of Proposition \ref{prop:pap:3.4a} and inserting $\sigma = t^{-1}$, we obtain
\begin{equation}\label{eq:pap:3.5}
 \| f * (  (\partial_t^\kappa)^m g - g^{(m)}  ) (t) \|_Y
 \leq 
 \kappa^2 C_1(t^{-1}) \sup_{\Re s = t^{-1}}  \| \mr F (s) \|_{X \to Y} \int_0^\infty  \| \mc P_m g^{(m+4)} (\tau) \|_X \mr d \tau.
\end{equation}
Now, our goal is to have an integral bound over the interval $(0,t)$. To do that, for fixed $t>0$, we consider the function $p$ introduced in Lemma \ref{lem:pap:5.1}. Since $p \in W^{2m+4}_+(\R;X)$, in other words, it satisfies the conditions of this proposition, we can have the estimate \eqref{eq:pap:3.5} for $p$ as well. Therefore, using the properties of this function, we write
\begin{align*}
 \| f * (  (\partial_t^\kappa)^m g - g^{(m)}  ) (t) \|_Y 
 & =  
 \| f * (  (\partial_t^\kappa)^m p - p^{(m)}  ) (t) \|_Y\\
 & \leq
  \kappa^2 C_1(t^{-1}) \sup_{\Re s = t^{-1}}  \| \mr F (s) \|_{X \to Y} \int_0^\infty  \| \mc P_m p^{(m+4)} (\tau) \|_X \mr d \tau \\
 & =
  \kappa^2 C_1(t^{-1}) \sup_{\Re s = t^{-1}}  \| \mr F (s) \|_{X \to Y} \int_0^t  \| \mc P_m g^{(m+4)} (\tau) \|_X \mr d \tau.
\end{align*}
This finishes the proof.
\end{proof}

\begin{proof}{[Theorem 2.1]}
For $s \in \C_+$, we define
\[
\mr F^m (s) := s^{-m} \mr F(s) = \mc L \{ f^m \} (s), \qquad 
\mr F^m_\kappa (s) := s^{-m}_\kappa \mr F(s_\kappa) = \mc L \{ f^m_\kappa \} (s),
\]
and 
\[
\mr G(s) := \mc L\{ g \} (s), \qquad 
\mr H(s) := s^m_\kappa \mr G(s) - s^m \mr G(s) = \mc L \{ h \} (s).
\]
Using these definitions it is not hard to see that
\[
(f_\kappa  - f) * g = (f^m_\kappa - f^m) * g^{(m)} + f_\kappa^m * h.
\]
Now, we will obtain bounds for the terms on the right-hand side. For the first term, since  $g^{(m)} \in W^{\alpha}_+(\R;X)$ and 
\[
	\| \mr F^m \|_{X \to Y} \leq C_F(\Re s) |s|^{\mu - m} \qquad \forall s \in \C_+,
\]
where $\mu - m \in (-1,0]$, we can use Proposition \ref{prop:pap:4.4} to write
\begin{equation}\label{eq:pap:5.1}
	\| (f^m_\kappa - f^m) * g^{(m)} (t) \|_Y 
	\leq  
	 \kappa^2 C_{\mu-m}^2 C_2(t^{-1}) \int_0^t \| g^{(m+\alpha)} (\tau) \|_X \, \dd \tau.
\end{equation}
Next, we bound the second term using Proposition \ref{prop:pap:3.4} in the following way
\[
\| (f_\kappa^m * h)(t) \|_Y 
		\leq 
		\kappa^2 C_1(t^{-1}) \sup_{\Re s = t^{-1}}  \| \mr F_\kappa^m (s) \|_{X \to Y} \,  \int_0^t \| \mc P_m^{(m+4)} (\tau) \|_X  \, \dd \tau.
\]
Here, using the definition of $C_1$ and Proposition \ref{prop:pap:4.1}(a) we have
\[
C_1(x) \sup_{\Re s = x}  \| \mr F_\kappa^m (s) \| \leq C_F(\min\{x,1\}/2) \frac{C_m \, 2^{-\mu +m}}{x \min\{ x^{2m-\mu},1\}},
\]
for all $x>0$. Combining this with \eqref{eq:pap:5.1} finishes the proof.
\end{proof}

\bibliographystyle{abbrv}
\bibliography{TRCQ}

\begin{framed}
\begin{center}
A longer script with all the details and no motivation. This part of the document starts from scratch and goes slowly over all the details of the shorter paper. The order of the arguments is different but notation is the same. The list of references is the same one as in the first part of the paper. 
\end{center}
\end{framed}

\setcounter{section}{0}
\section{General concepts and definitions} \label{sec:1}
We start with defining the set
	\begin{align*} \label{eq:1.1}
		\C_+ := \{ s \in \C : \Re s > 0 \}.
	\end{align*}
Following the notation of \cite{Sayas2016}, we assume $\mr F : \C_+ \to \mc B (X,Y)$ is analytic and satisfies
	\begin{align}
		 \| \mr F(s) \| \leq C_F(\Re s) |s|^\mu \qquad \forall s \in \C_+,
	\end{align}
where
\begin{itemize}
\item
$ \| \mr F(s) \| :=  \| \mr F(s) \|_{X \to Y}$,
\item
$C_F$ is non-increasing on $(0,\infty)$,
\item
$C_F(\sigma^{-1})$ is polynomially bounded at $\sigma = 0$.
\end{itemize}
The transfer function for the Trapezoidal rule is 
	\begin{align*}
		\delta(\zeta) := 2\frac{1-\zeta}{1+\zeta}.
	\end{align*}
Equivalently, we can also write
	\begin{align}
		\delta(e^{-z}) = 2 \tanh  \frac z 2 . \label{eq:1.2b}
	\end{align}
We will use the following notation in our temporal approximations 
	\begin{align}
		s_\kappa &:= \frac1\kappa \delta(e^{-\kappa s}) = \frac2\kappa \tanh \left(\frac{\kappa s}{2} \right), \label{eq:1.3b} \\
		\mr F_\kappa (s) &:= \mr F(s_\kappa), \nonumber
	\end{align}
and assume that temporal discretization parameter is bounded
	\begin{align*}
		0 \leq \kappa \leq 1.
	\end{align*}

\section{Technical results}
\begin{lemma} \label{lem:2.1}
The following properties hold for all $x \geq 0$:
\begin{enumerate}
\item[\rm (a)]
$\tanh x \geq \dfrac12  \min \{ 1, x \}$.
\item[\rm (b)]
$\coth x  \leq \dfrac2{\min\{ 1, x \}}$.
\item[\rm (c)]
$\tanh \dfrac{x}2 \geq \dfrac14 \min \{ 1, x\}$.
\item[\rm (d)]
$\coth \dfrac{x}2  \leq \dfrac4{\min\{1,x\}}$.
\end{enumerate}
\end{lemma}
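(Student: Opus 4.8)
The plan is to establish (a) first and then obtain (b), (c), (d) from it by elementary manipulations. For (a) I would split at $x=1$ according to which branch of $\min\{1,x\}$ is active. On $[1,\infty)$ monotonicity of $\tanh$ gives $\tanh x \ge \tanh 1 = (e^2-1)/(e^2+1)$, and $\tanh 1 > \tfrac12$ since $e^2 > 3$; hence $\tanh x \ge \tfrac12 = \tfrac12\min\{1,x\}$. On $[0,1]$ I would use that $\tanh$ is concave there: its second derivative is $-2\tanh x\,\mathrm{sech}^2 x \le 0$ for $x\ge 0$, so the graph lies above the chord joining $(0,0)$ and $(1,\tanh 1)$, giving $\tanh x \ge x\tanh 1 \ge \tfrac12 x = \tfrac12\min\{1,x\}$. (One could instead check directly that $x\mapsto \tanh x - \tfrac12 x$ vanishes at $0$, has positive derivative at $0$, has at most one interior critical point on $[0,1]$, and is positive at $x=1$.)

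For (b) I would just take reciprocals in (a). For $0 < x \le 1$, part (a) gives $\tanh x \ge x/2 > 0$, hence $\coth x \le 2/x = 2/\min\{1,x\}$. For $x \ge 1$, monotonicity gives $\coth x \le \coth 1 = 1/\tanh 1 < 2 = 2/\min\{1,x\}$.

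For (c) and (d) the idea is to apply (a) and (b) with $x$ replaced by $x/2$, together with the elementary inequality $\min\{1,x/2\} \ge \tfrac12\min\{1,x\}$ valid for all $x\ge 0$ (verified by the three cases $x\le 1$, $1\le x\le 2$, $x\ge 2$). Then $\tanh(x/2) \ge \tfrac12\min\{1,x/2\} \ge \tfrac14\min\{1,x\}$, which is (c), and $\coth(x/2) \le 2/\min\{1,x/2\} \le 4/\min\{1,x\}$, which is (d).

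I do not expect a real obstacle: the whole statement reduces to monotonicity of $\tanh$, concavity of $\tanh$ on $[0,\infty)$, and the single numerical fact $\tanh 1 > \tfrac12$; the only thing requiring care is the bookkeeping of the $\min$ in the case splits.
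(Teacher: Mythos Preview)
Your proof is correct and follows essentially the same approach as the paper: split (a) at $x=1$, use monotonicity and $\tanh 1>\tfrac12$ for $x\ge 1$, concavity of $\tanh$ for $x\in[0,1]$, then deduce (b)--(d) by reciprocals and the substitution $x\mapsto x/2$ together with $\min\{1,x/2\}\ge\tfrac12\min\{1,x\}$. The only cosmetic differences are that the paper phrases the concavity argument via the auxiliary function $f(x)=\tanh x-\tfrac{x}{2}$ (showing $f''\le 0$, $f(0)=0$, $f(1)>0$) rather than the chord inequality for $\tanh$ directly, and derives (d) from (c) rather than from (b); these are equivalent.
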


\begin{proof}
To prove (a), we first observe that $\tanh$ is increasing (see Figure \ref{fig:1}) and 
	\begin{align}
		\tanh 1 \approx 0.76 > 0.5,	\label{eq:2.2}
	\end{align} 	
therefore
	\begin{align*}
		\tanh x \geq \tanh 1 > \frac12 = \frac12 \min \{1,x\} \qquad \forall x \geq 1.
	\end{align*}
\begin{figure}[htbp]
\begin{center}
\includegraphics[clip,scale=0.4]{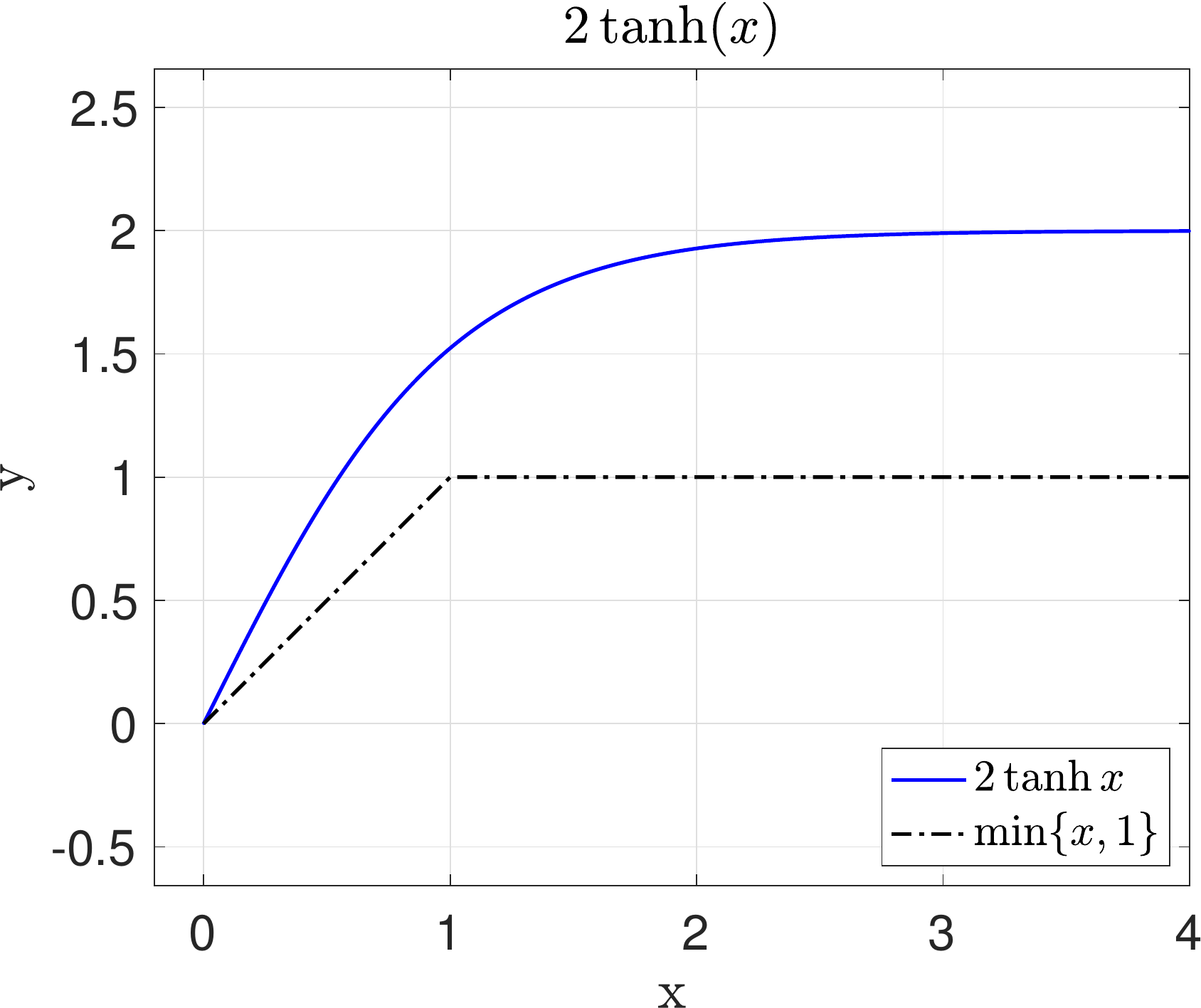}
\end{center}
\caption{$2\tanh(x)$ and $\min\{1,x\}$} \label{fig:1}
\end{figure}

For the case $x \leq 1$, we define the function
	\begin{align*}
		f (x) := \tanh x - \frac x 2.
	\end{align*}
Since 	
	\begin{align*}
		f'' (x) =  - 2 \frac  {\sinh x }{\cosh^3 x} \leq 0 \qquad \forall x \geq 0,
	\end{align*}
we know $f$ is concave. Using \eqref{eq:2.2}, we have
	\begin{align*}
		f(0) = 0, \quad f(1) = \tanh 1 - \frac12 > 0 ,
	\end{align*}
and, therefore,
	\begin{align*}
		f (x) = \tanh x - \frac x 2 > 0 \qquad \forall x \in (0,1),
	\end{align*}
which finishes (a). Part (b) follows from (a). For (c), using (a), we write
	\begin{align*}
		\tanh \frac x 2 \geq \frac12 \min \{1, \frac x 2 \} \geq \frac12 \min \{ \frac12, \frac x 2 \} = \frac14 \min \{1, x\}.
	\end{align*}
This also implies (d).
\end{proof}

\begin{proposition} \label{prop:2.2}
The following holds:
\begin{enumerate}
\item[\rm (a)]
$\Re \delta(e^{-z}) \geq \dfrac12 \min \{1, \Re z\}$ for all $z \in \C_+$.
\item[\rm (b)]
$| \delta(e^{-z}) | \leq \dfrac8{ \min \{1, \Re z\} }$ for all $z \in \C_+$.
\item[\rm (c)]
$|\delta(e^{-z}) - z| \leq D (|z|) |z|^3$ for all $z \in \C_+$ with $|z| < \pi$.
\item[\rm (d)]
$|\delta^m(e^{-z}) - z^m| \leq E_m (|z|) |z|^{m+2}$ for all $m \geq 1$ and all $z \in \C_+$ with $|z| < \pi$.
\item[\rm (e)]
$\Re \dfrac{\delta(e^{-z})}{z} \geq 1 - |z|^2 D(|z|)$ for all $z \in \C_+$ with $0 < |z| < c_0 < \pi$.
\end{enumerate}
Here 
$c_0 \in (0,\pi)$ is such that $c^2_0 D(c_0) = 1$, and
	\begin{align} \label{eq:2.8}
		D(\sigma) := \sum_{\ell = 0}^\infty \alpha_\ell \, \sigma^{2 \ell}, \qquad 
		E_m(\sigma) := \max\{D^j(\sigma) : j =1,\ldots,m \}\frac{(1 + \sigma^2)^m - 1}{\sigma^2},
	\end{align}
where $D$ is analytic in $B(0;\pi)$ with $\alpha_\ell \geq 0$ for all $\ell$. The functions
\[
(0,\pi) \ni \sigma \longmapsto D(\sigma), \qquad (0,\pi) \ni \sigma \longmapsto E_m(\sigma)
\]
are increasing and they diverge as $\sigma \to \pi^-$. Also note that $E_1 \equiv D$.
\end{proposition}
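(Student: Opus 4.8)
The plan is to establish Proposition~\ref{prop:2.2} by first proving the scalar facts (a)--(e) in essentially the same order, and then settling the stated monotonicity and divergence properties of $D$ and $E_m$. For (a) and (b), I would start from the elementary inequality $\Re\frac{1-\zeta}{1+\zeta}\ge\frac{1-|\zeta|}{1+|\zeta|}$ for $|\zeta|<1$, apply it with $\zeta=e^{-z}$ (so $|\zeta|=e^{-\Re z}<1$), recognize the right-hand side as $\tanh(\tfrac12\Re z)$, and invoke Lemma~\ref{lem:2.1}(c). Part (b) follows the same way using the reverse triangle inequality to get $\tfrac12|\delta(e^{-z})|\le\coth(\tfrac12\Re z)$ and then Lemma~\ref{lem:2.1}(d). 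These two parts are routine.

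The core of the work is (c)--(e), which all hinge on the auxiliary function $q(z):=(\delta(e^{-z})-z)/z^3$. The first step here is to show $q$ is analytic on $B(0;\pi)$ and $|q(z)|\le D(|z|)$. This comes from the power series expansion $2\frac{\tanh(\omega/2)-\omega/2}{\omega^3}=\sum_\ell b_\ell\omega^{2\ell}$ (an even function analytic on $B(0;\pi)$ since the nearest singularities of $\tanh(\omega/2)$ are at $\omega=\pm i\pi$), together with the definition $\alpha_\ell=|b_\ell|$, so that $|q(z)|\le\sum_\ell|b_\ell|\,|z|^{2\ell}=D(|z|)$. Part (c) is then immediate: $|\delta(e^{-z})-z|=|z|^3|q(z)|\le D(|z|)|z|^3$. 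For (d), write $\delta^m(e^{-z})=(z+z^3q(z))^m$ and expand by the binomial theorem; the $j=m$ term cancels $z^m$ and each remaining term $\binom{m}{j}z^j(z^3q(z))^{m-j}$ is bounded by $\binom{m}{j}|z|^j D(|z|)^{m-j}|z|^{3(m-j)}$. Factoring out $|z|^{m+2}$ and bounding $D(|z|)^{m-j}\le\max\{D^j(|z|):j=1,\dots,m\}$ (noting $|z|<\pi$ keeps us in the disk and $D\ge 1$ is not assumed, so one must be slightly careful: actually $\max_j D^j$ dominates each $D^{m-j}$ for $1\le m-j\le m-1$, and the $j=0$... wait, $j$ ranges $0$ to $m-1$ so $m-j$ ranges $1$ to $m$), the residual binomial sum $\sum_{j}\binom{m}{j}|z|^{2(m-j)}$ telescopes into $((1+|z|^2)^m-1)/|z|^2$ after multiplying and dividing appropriately, giving exactly $E_m(|z|)|z|^{m+2}$. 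Part (e) is $\Re(\delta(e^{-z})/z)=\Re(1+z^2q(z))\ge 1-|z|^2|q(z)|\ge 1-|z|^2D(|z|)$, and the restriction $|z|<c_0$ is only needed to keep the bound positive.

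For the final assertions about $D$ and $E_m$, I would argue as follows. The function $D$ has nonnegative coefficients $\alpha_\ell$ (with $\alpha_0=|b_0|$), hence on $(0,\pi)$ it is nondecreasing; the genuine strict increase and divergence at $\pi^-$ require knowing infinitely many $\alpha_\ell$ are positive and that the radius of convergence is exactly $\pi$ --- the latter because $\sum b_\ell\omega^{2\ell}$ has a singularity at $\omega=i\pi$, so $\limsup|b_\ell|^{1/2\ell}=1/\pi$, forcing $D(\sigma)\to\infty$ as $\sigma\to\pi^-$ (a power series with nonnegative coefficients and radius $\pi$ diverges at the real endpoint). The map $\sigma\mapsto\sigma^2 D(\sigma)$ is then strictly increasing from $0$ to $\infty$ on $[0,\pi)$, giving the unique $c_0$ with $c_0^2D(c_0)=1$. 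For $E_m$: the factor $((1+\sigma^2)^m-1)/\sigma^2=\sum_{k=1}^m\binom{m}{k}\sigma^{2(k-1)}$ is a polynomial with positive coefficients, hence strictly increasing and finite on $(0,\pi)$; $\max\{D^j:j=1,\dots,m\}$ is increasing in $\sigma$ (each $D^j$ is, and a max of increasing functions is increasing) and diverges at $\pi^-$ since $D$ does; a product of two positive increasing functions is increasing, and one diverges while the other stays bounded below by a positive constant, so $E_m$ diverges. Finally $E_1(\sigma)=D(\sigma)\cdot\frac{(1+\sigma^2)-1}{\sigma^2}=D(\sigma)$.

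The main obstacle I anticipate is not the algebra but pinning down the analytic facts about the coefficient sequence $b_\ell$ precisely enough: specifically, justifying that $2(\tanh(\omega/2)-\omega/2)/\omega^3$ is genuinely analytic on all of $B(0;\pi)$ (the apparent pole at $\omega=0$ is removable to order $3$, which needs the Taylor expansion $\tanh(\omega/2)=\omega/2-\omega^3/24+\cdots$), that its radius of convergence is exactly $\pi$ so that $D$ inherits this and diverges at the endpoint, and the clean combinatorial identity that turns the binomial residual in (d) into $((1+\sigma^2)^m-1)/\sigma^2$. Everything else is bookkeeping with triangle inequalities and the substitution $\zeta=e^{-z}$.
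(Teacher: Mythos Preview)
Your proposal is correct and follows essentially the same approach as the paper: the same M\"obius/\,$\tanh$ reduction for (a)--(b) via Lemma~\ref{lem:2.1}, and the same auxiliary function $q(z)=(\delta(e^{-z})-z)/z^3$ with the bound $|q(z)|\le D(|z|)$ driving (c)--(e), including the identical binomial manipulation for (d). If anything, you are more careful than the paper in justifying that the radius of convergence of $D$ is exactly $\pi$ (hence divergence at $\pi^-$) and in checking the index range $m-j\in\{1,\dots,m\}$ for the $\max_j D^j$ bound; the paper simply asserts these monotonicity and divergence facts without argument.
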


\begin{proof}
To prove (a), we first observe that the following holds for all $|\zeta| < 1$
	\begin{align} \label{eq:2.9}
		\Re \frac{1-\zeta}{1+\zeta} = \frac{1 - |\zeta|^2}{1 + 2 \Re \zeta + |\zeta|^2} \geq \frac{1 - |\zeta|^2}{(1 + |\zeta|)^2} = \frac{1 - |\zeta|}{1 + |\zeta|}.
	\end{align}
Now, when $\Re z > 0$ we have $| e^{-z} | < 1$. Therefore we can use \eqref{eq:2.9} with $\zeta = e^{-z}$, which gives
	\begin{align*}
		\frac12 \Re \delta (e^{-z}) \geq \frac{1 - |e^{-z}|}{1 + |e^{-z}|} = \frac{1 - e^{-\Re z}}{1 + e^{-\Re z}} = \tanh  \left( \frac {\Re z}2 \right).
	\end{align*}
The rest of the proof of (a) follows from Lemma \ref{lem:2.1}(c).

To prove (b), using the triangle and reverse triangle inequalities, we write
	\begin{align*}
		\frac{|1 - \zeta|}{|1 + \zeta|} \leq \frac{1 + |\zeta|}{1 - |\zeta|} \qquad |\zeta| < 1,
	\end{align*}
and therefore
	\begin{align*}
		| \delta(e^{-z}) |  = 2 \frac{| 1 - e^{-z}|}{|1 + e^{-z}|} 
		\leq 2 \frac{ 1 +  |e^{-z}|}{1 - |e^{-z}|} 
		\leq 2 \frac{ 1 + e^{-\Re z}}{1 - e^{-\Re z}} = 2 \coth \left( \frac{\Re z} 2 \right).
	\end{align*}
We then apply Lemma \ref{lem:2.1}(d) to finish the proof.

To show (c), we write the following (see \eqref{eq:1.2b})
	\begin{align*}
		\delta(e^{-z}) - z = 2 \left( \tanh \frac z 2 - \frac z 2 \right),
	\end{align*}
and define $\omega := \dfrac z 2$. We know $\tanh$ is an analytic odd function in $B(0;\frac\pi2)$, and $ \tanh'(0) = 1$, hence
	\begin{align} \label{eq:2.14}
		f(\omega) := \frac{\tanh \omega - \omega}{\omega^3} = \sum_{\ell = 0}^\infty a_\ell \, \omega^{2 \ell}
	\end{align}
is an analytic even function in the same domain. As a consequence, we know that
	\begin{align*}
		\frac14 f \left( \frac z 2 \right) = \sum_{\ell = 0}^\infty \frac{a_\ell}{2^{2 \ell + 2}} \, z^{2 \ell}
	\end{align*}
is analytic in $B(0;\pi)$. Using this, we can now write
	\begin{align} \label{eq:2.17}
		 \left| \frac{\delta(e^{-z}) - z}{z^3}   \right| = \frac14  \left| \frac{\tanh \frac z 2 - \frac z 2}{ \left( \frac z 2 \right)^3 }  \right| 
		 = \frac14 \left| f \left( \frac z 2 \right) \right|
		 \leq \sum_{\ell = 0}^\infty  \left| \frac{a_\ell}{2^{2 \ell + 2}} \right| |z|^{2 \ell}.
	\end{align}
To finish this part of the proof, we define $\alpha_\ell =  \left| \dfrac{a_\ell}{2^{2 \ell + 2}} \right| $, which gives
	\begin{align} \label{eq:2.18}
		D(\sigma) = \sum_{\ell = 0}^\infty \alpha_\ell \, \sigma^{2 \ell} = \sum_{\ell = 0}^\infty  \left| \dfrac{a_\ell}{2^{2 \ell + 2}} \right| \, \sigma^{2 \ell}.
	\end{align}

For (d), we first write 
	\begin{align*}
		\delta (e^{-z}) = z + q(z),
	\end{align*}
where, because of (c), we know that
	\begin{align} \label{eq:2.20}
		|q(z)| \leq D(|z|) |z|^3 \qquad \forall |z| \leq \pi.
	\end{align}
Next, we compute
	\begin{align*}
		\delta^m (e^{-z}) = z^m + \sum_{j=0}^{m-1} {m \choose j} z^j q^{m-j} (z),
	\end{align*}
and, using \eqref{eq:2.20}, we obtain
	\begin{align*}
		\left | \delta^m (e^{-z}) - z^m \right | 
			&\leq  \sum_{j=0}^{m-1} {m \choose j} |z|^j D^{m-j} (|z|) |z|^{3m -3j} \nonumber \\
			& \leq  \max\{D^j(|z|)  : j =1,\ldots,m \} \ |z|^m  \sum_{j=0}^{m-1} {m \choose j} |z^2|^{m-j},
	\end{align*}
which holds for all $|z| < \pi$. Finally, using the fact that
	\begin{align*}
		\frac1{|z|^2}\sum_{j=0}^{m-1} {m \choose j} |z^2|^{m-j} = \frac{(1 + |z|^2)^m - 1}{|z|^2},
	\end{align*}
and the definition of the function $E_m$, we write
	\begin{align*}
		|\delta^m(z) - z^m| \leq E_m (|z|) |z|^{m+2}.
	\end{align*}

To prove (e), using \eqref{eq:1.2b} and \eqref{eq:2.14}, we write
	\begin{align*}
		\frac{\delta(e^{-z})}{z} = \frac{2 \tanh (z/2)}{z} \quad \forall z \in \C_+, 
		\qquad \frac{\tanh \omega}{\omega} = 1 + \omega^2 f(\omega) \quad \forall \omega \in B(0;\tfrac\pi2).
	\end{align*}
Therefore, for all $z \in B(0;\pi)$, we have
	\begin{align} \label{eq:2.27}
		\Re \frac{\delta(e^{-z})} z    = \Re  \left( 1 + z^2 \frac14 f \left( \frac z 2 \right ) \right) 
							 \geq 1 - |z|^2 \frac14  \left| f \left( \frac z 2 \right) \right|  
							\geq 1 - |z|^2 D(|z|),
	\end{align}
where we used \eqref{eq:2.17} and \eqref{eq:2.18} for the last inequality. Although \eqref{eq:2.27} holds for all $z \in B(0;\pi)$, it is not useful to have a negative lower bound. To avoid that, we investigate the function 
	\begin{align*}
		[0,\pi) \ni \sigma \mapsto \sigma^2 D(\sigma).
	\end{align*}
This function is increasing, since $D$ is increasing (see its definition \eqref{eq:2.8}), it vanishes at $\sigma=0$, and diverges as $\sigma \to \pi^-$. Hence, there is a unique $c_0 \in (0,\pi)$ such that 
	\begin{align*}
		c_0^2 D(c_0) = 1,
	\end{align*}
and we will consider the inequality \eqref{eq:2.27} when $0<|z|< c_0<\pi$.
\end{proof}

Following result is the discrete counterpart of Proposition \ref{prop:2.2}.
\begin{proposition} \label{prop:2.3}
For $s_\kappa$ as in \eqref{eq:1.3b}, we have	
\begin{enumerate}
\item[\rm (a)]
$\Re s_\kappa \geq \dfrac12 \min \{1, \Re s\}$ for all $s \in \C_+$.
\item [\rm (b)]
$|s_\kappa| \leq \dfrac8 {\kappa^2 \min\{1, \Re s \}}$ for all $s \in \C_+$.
\item [\rm (c)]
$|s_\kappa^m - s^m| \leq E_m (|\kappa s|) \kappa^2 |s|^{m+2}$ for all $m \geq 1$ and all $s \in \C_+$ with $|\kappa s| < \pi$.
\item [\rm (d)]
$\Re \dfrac{s_\kappa}{s} \geq 1 - |\kappa s|^2 D(|\kappa s|)$ for all $s \in \C_+$ with $0 < |\kappa s| < c_0 < \pi$.
\end{enumerate}
\end{proposition}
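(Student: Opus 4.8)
The plan is to reduce every statement to its continuous analogue in Proposition~\ref{prop:2.2} through the substitution $z = \kappa s$, and then to bookkeep the powers of $\kappa$ that appear. Since $s \in \C_+$ and $\kappa \in (0,1]$, we have $\kappa s \in \C_+$ with $\Re(\kappa s) = \kappa\,\Re s$ and $|\kappa s| = \kappa\,|s|$, so Proposition~\ref{prop:2.2} applies to $z = \kappa s$ verbatim. The only elementary fact needed is that, for $\kappa \in (0,1]$ and any $x > 0$,
\[
\kappa \min\{1, x\} \le \min\{1, \kappa x\},
\]
which holds because $\kappa x \ge \kappa\min\{1,x\}$ while $1 \ge \kappa \ge \kappa\min\{1,x\}$.

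For (a), I would write $\Re s_\kappa = \kappa^{-1}\Re\delta(e^{-\kappa s})$ and apply Proposition~\ref{prop:2.2}(a) to get $\Re s_\kappa \ge \tfrac12\kappa^{-1}\min\{1, \Re(\kappa s)\} = \tfrac12\kappa^{-1}\min\{1,\kappa\Re s\}$; then bound $\min\{1,\kappa\Re s\} \ge \kappa\min\{1,\Re s\}$ to absorb the $\kappa^{-1}$. For (b), similarly $|s_\kappa| = \kappa^{-1}|\delta(e^{-\kappa s})| \le 8\kappa^{-1}/\min\{1,\kappa\Re s\}$ by Proposition~\ref{prop:2.2}(b), and the same inequality $\min\{1,\kappa\Re s\} \ge \kappa\min\{1,\Re s\}$ produces the claimed $8/(\kappa^2\min\{1,\Re s\})$. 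For (c), the estimate is a pure rescaling of Proposition~\ref{prop:2.2}(d): since $s_\kappa^m = \kappa^{-m}\delta^m(e^{-\kappa s})$ and $s^m = \kappa^{-m}(\kappa s)^m$,
\[
|s_\kappa^m - s^m| = \kappa^{-m}\,|\delta^m(e^{-\kappa s}) - (\kappa s)^m| \le \kappa^{-m} E_m(|\kappa s|)\,|\kappa s|^{m+2} = \kappa^2\,E_m(|\kappa s|)\,|s|^{m+2},
\]
valid exactly when $|\kappa s| < \pi$. For (d), no $\kappa$-bookkeeping is needed at all: $\Re(s_\kappa/s) = \Re\bigl(\delta(e^{-\kappa s})/(\kappa s)\bigr) \ge 1 - |\kappa s|^2 D(|\kappa s|)$ is exactly Proposition~\ref{prop:2.2}(e) with $z = \kappa s$, valid for $0 < |\kappa s| < c_0 < \pi$.

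I do not expect any genuine obstacle; the only step demanding a moment of care is the treatment of the minima in parts (a) and (b), where one must not naively cancel the factors of $\kappa$ but rather invoke $\kappa\min\{1,x\} \le \min\{1,\kappa x\}$ together with the hypothesis $\kappa \le 1$. Everything else is a direct change of variables inside the already-established Proposition~\ref{prop:2.2}.
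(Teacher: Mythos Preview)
Your proposal is correct and follows essentially the same approach as the paper: both proofs reduce each item to Proposition~\ref{prop:2.2} via the substitution $z=\kappa s$, and handle the powers of $\kappa$ in (a) and (b) through the equivalent observation that $\kappa^{-1}\min\{1,\kappa x\}=\min\{\kappa^{-1},x\}\ge\min\{1,x\}$ for $\kappa\le 1$.
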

\begin{proof}
Part (a) follows from Proposition \ref{prop:2.2}(a) and the fact that $\kappa \leq 1$
	\begin{align*}
		\Re s_\kappa = \frac1\kappa \Re \delta (e^{-\kappa s}) 
			& \geq \frac1{2\kappa} \min \{1, \kappa \Re s\} = \frac12 \min \{\frac1\kappa, \Re s\} \nonumber \\
			& \geq \frac12 \min \{1, \Re s\}.
	\end{align*}
Similarly, for (b), we use Proposition \ref{prop:2.2}(b) to write
	\begin{align*}
		| s_\kappa | = \frac1\kappa \left| \delta (e^{-\kappa s}) \right | 
			& \leq \frac{1}{\kappa} \frac{8}{\min \{ 1, \kappa \Re s \}} \nonumber \\
			& \leq \frac{1}{\kappa^2} \frac{8}{\min \{ \frac1\kappa, \Re s \}} \leq \frac{1}{\kappa^2} \frac{8}{\min \{ 1, \Re s \}}.
	\end{align*}
To prove (c), we use Proposition \ref{prop:2.2}(d) to show
	\begin{align*}
		| s_\kappa^m - s^m |  = \frac1{\kappa^m} | \delta^m(e^{-\kappa s}) - (\kappa s)^m | 
						 \leq \frac1{\kappa^m} E_m(|\kappa s|) |\kappa s|^{m+2}.
	\end{align*}
Finally, to prove (d), with the help of Proposition \ref{prop:2.2}(e) we write
	\begin{align*}
		\Re \dfrac{s_\kappa}{s}  = \Re \dfrac{\delta(e^{-\kappa s})}{\kappa s} \geq 1 - |\kappa s|^2 D(|\kappa s|),
	\end{align*}
which finishes the proof.
\end{proof}

\section{Discrete differentiation}
We will call a function $g : \R \to X$ \textit{causal}, if $g(t) = 0$ for all $t < 0$. Next, we define the following Sobolev spaces for $m \geq 1$ and Hilbert space $X$
	\begin{align*}
		W^{m}_+(\R; X) := \{ g \in \mc C^{m-1}(\R;X) :  g \text{ causal}, \quad g^{(m)} \in L^1(\R; X)  \}.
	\end{align*}
Note that, for $g \in W^{m}_+(\R; X)$, we have
	\begin{align*}
		g(0) = \ldots = g^{m-1}(0) = 0, \qquad g \in \mc C^{m-1}(\R; X),
	\end{align*}
and we know that the space
	\begin{align*}
		\mc C^m_+ (\R; X) := \{ g \in \mc C^m(\R; X): g \text{ causal} \}
	\end{align*}
is dense in $W^m_+(\R; X)$. We will also make use of the following definition
	\begin{align*}
		\ul \sigma := \min\{1, \sigma\}.
	\end{align*}

Before moving to the main result of this section, we are going to present some technical tools. The following lemma establishes a way of going from an integral in the Laplace domain to an integral in the time domain with a cost of two derivatives.
\begin{lemma} \label{lem:3.1}
For $g \in W^{2}_+(\R;X)$ and $\sigma > 0$ we have
	\begin{align*}
		\int_{-\infty}^\infty   \| \mr G(\sigma + i \omega) \|_X \dd \omega 
		\leq 
		\frac{\pi}{\sigma} \int_0 ^\infty  \| \ddot g(\tau) \|_X \dd \tau.
	\end{align*}
\end{lemma}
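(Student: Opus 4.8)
The plan is to bound the integral on the left by inserting and removing the factor $(\sigma+i\omega)^2$, using that the Laplace transform of $\ddot g$ is $s^2\mr G(s)$, and then integrating the remaining rational weight explicitly. First I would write, for $s=\sigma+i\omega$,
\[
\|\mr G(s)\|_X=\frac{1}{|s|^2}\,\|s^2\mr G(s)\|_X\le\frac{1}{\sigma^2+\omega^2}\sup_{\Re z=\sigma}\|z^2\mr G(z)\|_X,
\]
since $|s|^2=\sigma^2+\omega^2\ge\sigma^2>0$ guarantees the division is legitimate on the line $\Re s=\sigma$. This step uses that $g\in W^2_+(\R;X)$, so $g,\dot g$ are continuous and causal with $\ddot g\in L^1(\R;X)$, which is exactly what makes $\mr G(s)$ well defined and analytic on $\C_+$ and makes $s^2\mr G(s)=\mc L\{\ddot g\}(s)$ valid there (integrating by parts twice, with no boundary terms because $g(0)=\dot g(0)=0$).

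Next I would pull the supremum out of the $\omega$-integral and evaluate
\[
\int_{-\infty}^\infty\frac{\dd\omega}{\sigma^2+\omega^2}=\frac{1}{\sigma}\Big[\arctan\frac{\omega}{\sigma}\Big]_{-\infty}^{\infty}=\frac{\pi}{\sigma}.
\]
Then I would bound $\sup_{\Re z=\sigma}\|\mc L\{\ddot g\}(z)\|_X$ by the obvious estimate $\|\mc L\{\ddot g\}(z)\|_X\le\int_0^\infty e^{-\sigma\tau}\|\ddot g(\tau)\|_X\,\dd\tau\le\int_0^\infty\|\ddot g(\tau)\|_X\,\dd\tau$, using $e^{-\sigma\tau}\le 1$ for $\tau\ge 0$ and causality of $\ddot g$. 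Chaining these three facts gives
\[
\int_{-\infty}^\infty\|\mr G(\sigma+i\omega)\|_X\,\dd\omega\le\frac{\pi}{\sigma}\int_0^\infty\|\ddot g(\tau)\|_X\,\dd\tau,
\]
which is the claim.

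There is no real obstacle here; the only point requiring a line of care is justifying $\mc L\{\ddot g\}(s)=s^2\mr G(s)$ on $\C_+$, i.e. that the two integrations by parts produce no boundary contributions. That follows because $g\in W^2_+(\R;X)$ forces $g(0)=\dot g(0)=0$ (the remark after the definition of $W^n_+$), and because $g,\dot g$ are polynomially bounded while $e^{-s\tau}$ decays for $\Re s>0$, so the terms at $\tau=+\infty$ vanish as well. Everything else is the elementary computation above, so this is the kind of routine estimate the paper flags as "easy but cumbersome."
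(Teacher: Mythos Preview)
Your proof is correct and follows essentially the same approach as the paper: insert and remove the factor $s^2$, pull out the supremum of $\|s^2\mr G(s)\|_X=\|\mc L\{\ddot g\}(s)\|_X$, integrate $\int_{-\infty}^\infty(\sigma^2+\omega^2)^{-1}\dd\omega=\pi/\sigma$, and bound the Laplace transform by $\int_0^\infty\|\ddot g(\tau)\|_X\,\dd\tau$. Your additional care in justifying $s^2\mr G(s)=\mc L\{\ddot g\}(s)$ via the vanishing initial data of $W^2_+$ is welcome but not strictly needed beyond what the paper assumes.
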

\begin{proof}
We start with writing
	\begin{align*}
		\int_{-\infty}^\infty   \| \mr G(\sigma + i \omega) \|_X \dd \omega 
		& = 
		\int_{-\infty}^\infty  \frac{1}{\sigma^2 + \omega^2} \| (\sigma + i \omega)^2 \mr G(\sigma + i \omega) \|_X \dd \omega \nonumber \\
		& \leq 
		\sup_{\Re s = \sigma}  \| s^2 \mr G(s) \|_X \ \int_{-\infty}^\infty \frac{\dd \omega}{\sigma^2 + \omega^2}.
	\end{align*}
We can evaluate the integral on the right-hand side exactly
	\begin{align*}
		2 \int_{0}^\infty \frac{\dd \omega}{\sigma^2 + \omega^2} 
		= 
		\frac 2 \sigma \lim_{\omega \to \infty}  \arctan \frac \omega \sigma
		= \frac \pi \sigma.
	\end{align*}
For the other part, using the definition of the Laplace transform we write
	\begin{align*}
		\sup_{\Re s = \sigma}  \| s^2 \mr G(s) \|_X 
		= 
		\sup_{\Re s = \sigma}  \| \mc L \{ \ddot g \}(s) \|_X
		 \leq 
		 \int_0 ^\infty  \| \ddot g(\tau) \|_X \dd \tau.
	\end{align*}
This finishes the proof.
\end{proof}

\begin{lemma} \label{lem:3.2}
For all $z \in \C_+$ and $m \geq 1$ we have
	\begin{align} \label{eq:3.10}
		1 + |z|^m \leq 2^{m/2} |1 + z|^m.
	\end{align}
\end{lemma}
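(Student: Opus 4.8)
\textbf{Proof proposal for Lemma \ref{lem:3.2}.} The plan is to reduce the claim to two elementary scalar inequalities. First I would establish the pointwise bound
\[
|1+z| \ge \tfrac1{\sqrt2}\,(1+|z|) \qquad \forall z \in \C_+,
\]
which is the only place where the hypothesis $z\in\C_+$ is used. Writing $z=x+iy$ with $x>0$, this is equivalent (after squaring and multiplying by $2$) to
\[
2\bigl((1+x)^2+y^2\bigr) \ge (1+|z|)^2 = 1 + 2|z| + |z|^2,
\]
and since $(1+x)^2+y^2 = 1+2x+|z|^2$ the inequality becomes $1 + 4x + |z|^2 - 2|z| \ge 0$, i.e.\ $(1-|z|)^2 + 4x \ge 0$, which is clear because $x=\Re z > 0$.

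Next I would raise this to the $m$-th power, giving $|1+z|^m \ge 2^{-m/2}(1+|z|)^m$, and then invoke the superadditivity of $t\mapsto t^m$ on $[0,\infty)$ for $m\ge 1$, namely $(a+b)^m \ge a^m + b^m$ for $a,b\ge 0$, applied with $a=1$, $b=|z|$, to get $(1+|z|)^m \ge 1 + |z|^m$. Chaining these two estimates yields
\[
2^{m/2}\,|1+z|^m \;\ge\; (1+|z|)^m \;\ge\; 1+|z|^m,
\]
which is exactly \eqref{eq:3.10}.

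There is no real obstacle here; the only step requiring a moment's thought is recognizing that the quadratic inequality in $|z|$ appearing after squaring factors as a perfect square plus a manifestly nonnegative term, so that positivity of $\Re z$ closes it. One could alternatively run the argument through $|1+z|^2 \ge 1+|z|^2$ (again immediate from $\Re z>0$) together with $(1+|z|^m)^2 \le 2(1+|z|^{2m}) \le 2(1+|z|^2)^m \le 2^m(1+|z|^2)^m$, using AM--GM and superadditivity, and then take square roots; both routes are equally short.
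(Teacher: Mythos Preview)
Your proof is correct and follows essentially the same route as the paper: establish the scalar bound $1+|z|\le\sqrt2\,|1+z|$ from $\Re z>0$, then combine $1+|z|^m\le(1+|z|)^m$ with the $m$-th power of that bound. The only cosmetic difference is in the verification of the base inequality---you expand and recognize $(1-|z|)^2+4\Re z\ge0$, while the paper splits it as $(1+|z|)^2\le 2(1+|z|^2)<2|1+z|^2$---which is exactly the alternative you mention at the end.
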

\begin{proof}
Using the Cauchy-Schwarz inequality and the fact that $\Re z > 0$, we write
	\begin{align*}
		(1 + |z|)^2 = 1 + 2 |z| + |z|^2 	& \leq 2 (1 + |z|^2) \nonumber \\
								& < 2 (1 + 2 \Re z + |z|^2)  = 2 |1 + z|^2,
	\end{align*}
which proves \eqref{eq:3.10} for $m=1$. When $m > 1$, we do the following
	\begin{align*}
		1 + |z|^m \leq (1 + |z|)^m \leq  \left( \sqrt 2 |1 + z| \right)^m,
	\end{align*}
where for the last inequality we use the case $m=1$.
\end{proof}
Next result is a key ingredient in the proof of Theorem \ref{thm:5.3}. We introduce the following notation
	\begin{align} \label{eq:3.13}
		(\mc P_m g)(t) &:= e^{-t}(e^\punto g)^{(m)}(t) = \sum_{\ell = 0}^m {m \choose \ell} g^{(\ell)}(t).
	\end{align}
\begin{proposition}\label{prop:3.3}
Let $m\geq0$, $g \in W_+^{2m+4}(\R; X)$, and
	\begin{align*}
		\mr G := \mc L \{ g \}, \qquad \mr H(s) := (s^m_\kappa - s^m) \mr G(s).
	\end{align*}
For all $\sigma > 0$, we have
	\begin{align*}
		\int_{-\infty}^\infty   \| \mr H(\sigma + i \omega)  \|_X \dd \omega \leq \kappa^2 \frac {C_m^1} {\sigma \underline{\sigma}^m} \int_0^\infty  \| (\mc P_m g^{(m+4)}) (\tau)  \|_X \dd \tau,
	\end{align*}
where
	\begin{align} \label{eq:3.18}
		C_m^1 := \pi \, 2^{m/2} \min_{c \in (0,\pi)} \max\left\{ E_m(c) + \frac1{c^2}, \frac{8^m}{c^{2m+2}} \right\}.
	\end{align}
\end{proposition}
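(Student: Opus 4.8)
The plan is to fix $\sigma>0$, introduce a free parameter $c\in(0,\pi)$, and split the frequency line into the ``low'' region $I^1:=\{\omega\in\R:|\sigma+i\omega|\le c/\kappa\}$ and the ``high'' region $I^2:=\{\omega\in\R:|\sigma+i\omega|\ge c/\kappa\}$, writing $s=s(\omega):=\sigma+i\omega$; on $I^1$ one has $|\kappa s|\le c<\pi$ and on $I^2$ one has $|\kappa s|\ge c$. The case $m=0$ is vacuous since then $\mr H\equiv0$, so assume $m\ge1$. On $I^1$, since $E_m$ is increasing, Proposition \ref{prop:2.3}(c) gives $|s_\kappa^m-s^m|\le E_m(c)\,\kappa^2|s|^{m+2}$. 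On $I^2$ I would use $|s_\kappa^m-s^m|\le|s_\kappa|^m+|s|^m$: for the first piece Proposition \ref{prop:2.3}(b) yields $|s_\kappa|^m\le(8/(\kappa^2\ul\sigma))^m$, which I multiply by $1\le(\kappa|s|/c)^{2m+2}$ to obtain $|s_\kappa|^m\le 8^m\kappa^2|s|^{2m+2}/(c^{2m+2}\ul\sigma^m)$; for the second piece I multiply $|s|^m$ by $1\le(\kappa|s|/c)^2$ to get $|s|^m\le\kappa^2|s|^{m+2}/c^2$.

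Adding the three contributions and using $\ul\sigma^{-m}\ge1$ to house everything under a single constant yields a bound of the form
\[
\int_{-\infty}^\infty\|\mr H(s)\|_X\,\dd\omega\ \le\ \kappa^2\,\frac{e_{m,c}}{\ul\sigma^m}\int_{-\infty}^\infty(1+|s|^m)\,\|s^{m+2}\mr G(s)\|_X\,\dd\omega,\qquad e_{m,c}:=\max\Big\{E_m(c)+\tfrac1{c^2},\ \tfrac{8^m}{c^{2m+2}}\Big\}.
\]
Next I would recognise the integrand as a Laplace transform: since $g\in W_+^{2m+4}(\R;X)$ has vanishing initial data, $\mc L\{g^{(\ell)}\}(s)=s^\ell\mr G(s)$, and the definition \eqref{eq:3.13} of $\mc P_m$ gives $\mc L\{\mc P_m g^{(m+2)}\}(s)=(1+s)^m s^{m+2}\mr G(s)$. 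Combining Lemma \ref{lem:3.2} (i.e.\ $1+|s|^m\le2^{m/2}|1+s|^m$) with Lemma \ref{lem:3.1} applied to $\mc P_m g^{(m+2)}$ — which lies in $W_+^2(\R;X)$ because $g\in\mc C^{2m+3}(\R;X)$ and $\mc P_m$ commutes with $\tfrac{\dd}{\dd\tau}$, so $(\mc P_m g^{(m+2)})''=\mc P_m g^{(m+4)}$ — I get
\[
\int_{-\infty}^\infty(1+|s|^m)\,\|s^{m+2}\mr G(s)\|_X\,\dd\omega\ \le\ 2^{m/2}\,\frac\pi\sigma\int_0^\infty\|(\mc P_m g^{(m+4)})(\tau)\|_X\,\dd\tau.
\]

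Substituting this back produces the claimed inequality with $2^{m/2}\pi\,e_{m,c}$ playing the role of $C_m^1$; since $c\in(0,\pi)$ was arbitrary, I would finish by optimising, i.e.\ replacing $e_{m,c}$ by $\min_{c\in(0,\pi)}e_{m,c}$, which recovers exactly the constant in \eqref{eq:3.18}.

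I expect the main obstacle to be the bookkeeping on $I^2$: one must verify that each of the three resulting pieces carries a full factor $\kappa^2$ and is dominated by $(1+|s|^m)\|s^{m+2}\mr G(s)\|_X$ after inserting the appropriate power of $\kappa|s|/c\ge1$, while tracking where (and only where) the $\ul\sigma^{-m}$ factor is genuinely needed. A secondary point to check is that $\mc P_m g^{(m+2)}$ really has the two extra integrable derivatives demanded by Lemma \ref{lem:3.1} (if $\mc P_m g^{(m+4)}\notin L^1$ the right-hand side is infinite and there is nothing to prove); this is what pins down the regularity exponent $2m+4$ in the hypothesis.
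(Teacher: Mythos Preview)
Your proposal is correct and follows essentially the same route as the paper's proof: the same splitting into $I^1$ and $I^2$, the same use of Proposition~\ref{prop:2.3}(c) on $I^1$ and of Proposition~\ref{prop:2.3}(b) together with powers of $\kappa|s|/c\ge1$ on $I^2$, the same combination into $e_{m,c}\,\ul\sigma^{-m}\int(1+|s|^m)\|s^{m+2}\mr G(s)\|_X\,\dd\omega$, and the same finish via Lemma~\ref{lem:3.2} and Lemma~\ref{lem:3.1} applied to $\mc P_m g^{(m+2)}$ before optimising over $c$. Your remarks on the $m=0$ case and on the regularity check for $\mc P_m g^{(m+2)}\in W_+^2(\R;X)$ are accurate side observations that the paper leaves implicit.
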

\begin{proof}
We start with fixing $\sigma > 0$. For an arbitrary and fixed $c \in (0, \pi)$, we define regions of the integration
	\begin{align*}
		I^1 :=  \left\{ \omega \in \R : |\sigma  + i \omega| \leq \frac c \kappa \right\}, \qquad
		I^2 :=  \left\{ \omega \in \R: |\sigma  + i \omega| \geq \frac c \kappa \right\}.
	\end{align*}
We also introduce the following notation for convenience
	\begin{align*}
		s = s(\omega) := \sigma + i \omega, \qquad  s_\kappa = s_\kappa(\omega) := \frac1\kappa \delta(e^{-(\sigma + i \omega) \kappa}).
	\end{align*}
Next, we split our target integral into three pieces
	\begin{align} \label{eq:3.21}
		\int_{-\infty}^\infty  \| \mr H(\sigma + i \omega) \|_X \dd \omega 
		\leq & 
		\int_{I^1} |s_\kappa^m - s^m| \, \| \mr G(\sigma + i \omega) \|_X \dd \omega \nonumber \\
		& + \int_{I^2} |s_\kappa^m | \, \| \mr G(\sigma + i \omega) \|_X \dd \omega + \int_{I^2} |s^m| \, \| \mr G(\sigma + i \omega) \|_X \dd \omega \nonumber \\
		 = & A_1 + A_2 + A_3.
	\end{align}
Now we are going to work on integrals $A_1,A_2,A_3$ separately and establish some bounds in the Laplace domain.

When $\omega \in I^1$, we have $|\kappa s| \leq c$. Therefore, using Proposition \ref{prop:2.3}(c) we have
	\begin{align*}
		|s_\kappa^m - s^m| \leq E_m(|\kappa s|) \kappa^2 |s|^{m+2} \leq E_m(c) \kappa^2 |s|^{m+2},
	\end{align*}
where we used the fact that $E_m$ is an increasing function (see \eqref{eq:2.8}). Hence, we can write
	\begin{align} \label{eq:3.22}
		A_1 
		= 
		\int_{I^1} |s_\kappa^m - s^m| \, \| \mr G(\sigma + i \omega) \|_X \dd \omega 
		\leq 
		\kappa^2 E_m(c)  \int_{-\infty}^\infty s^{m+2} \, \| \mr G(\sigma + i \omega) \|_X \dd \omega.
	\end{align}
Next, we first note that Proposition \ref{prop:2.3}(b) and the definition of $I^2$ imply
	\begin{align} \label{eq:3.23}
		\qquad |s_\kappa| \leq \frac 8 {\kappa^2 \underline\sigma }, \qquad \frac{1}{\kappa} \leq \frac{|s|} c \qquad \forall\omega \in I^2.
	\end{align}
Therefore, we have
	\begin{align*}
		|s_\kappa^m| 
		\leq  
		\left( \frac 8 {\kappa^2 \ul \sigma} \right)^m 
		= 
		\kappa^2  \left( \frac8{\ul \sigma} \right)^m \frac1{\kappa^{2m+2}}
		\leq
		\kappa^2 \left( \frac8{\ul \sigma} \right)^m \frac{|s|^{2m+2}}{c^{2m+2}},
	\end{align*}
which then gives the following bound for $A_2$
	\begin{align} \label{eq:3.24}
		A_2 	= \int_{I^2} |s_\kappa^m | \, \| \mr G(\sigma + i \omega) \|_X \dd \omega
			\leq \frac {\kappa^2}{c^{2m+2}} \left( \frac 8 {\underline\sigma} \right)^m  \int_{-\infty}^\infty \| s^{2m+2} \mr G(\sigma + i \omega) \|_X \dd \omega.
	\end{align}
For the last integral, the fact that
	\begin{align*}
		1 \leq \frac {|\kappa s|} c \qquad \forall \omega \in I^2
	\end{align*}
simply implies
	\begin{align} \label{eq:3.25}
		A_3 = \int_{I^2} |s^m| \, \| \mr G(\sigma + i \omega) \|_X \dd \omega 
		\leq  
		\frac{\kappa^2}{c^2}\int_{-\infty}^\infty \, \| s^{m+2} \mr G(\sigma + i \omega) \|_X \dd \omega.
	\end{align}
Now, we combine the bounds given in \eqref{eq:3.22}, \eqref{eq:3.24} and \eqref{eq:3.25} to write
	\begin{align} \label{eq:3.26}
		A_1 + A_2 + A_3 \leq \kappa^2 \frac{e_{m,c}} {\ul \sigma^m} \int_{-\infty}^\infty (1 + |s|^m)  \| s^{m+2} \mr G(s) \|_X \dd \omega,
	\end{align}
where
	\begin{align*}
		e_{m,c} := \max \left\{ E_m(c) + \frac1{c^2}, \ \frac{8^m}{ c^{2m+2}} \right\}.
	\end{align*}
Next, we are going to obtain a time domain bound for \eqref{eq:3.26}. To do that, we first observe that Lemma \ref{lem:3.2} gives
	\begin{align} \label{eq:3.28}
		(1 + |s|^m) & \| s^{m+2} \mr G(s) \|_X 
		\leq
		 2^{m/2} \| (1 + s)^m s^{m+2} \mr G(s) \|_X,
	\end{align}
and the definition of $\mc P_m$ in \eqref{eq:3.13} implies (note the extra $s^2$)
	\begin{align*}
		\mc L \{ \mc P_m g^{(m+4)} \}(s) = (1 + s)^m \mc L \{ g^{(m+4)} \}(s) = (1 + s)^m s^{m+4} \mr G(s).
	\end{align*}
Therefore, with the help of Lemma \ref{lem:3.1} we can write
	\begin{align} \label{eq:3.30}
		\int_{-\infty}^\infty   \| (1 + s)^m & s^{m+2} \mr G(s) \|_X \dd \omega
		\leq 
		\frac \pi \sigma \int_0^\infty   \| (\mc P_m g^{(m+4)}) (\tau) \|_X \dd \tau,
	\end{align}
and, together with \eqref{eq:3.21}, \eqref{eq:3.26} and \eqref{eq:3.28}, we obtain
	\begin{align}\label{eq:3.31}
		\int_{-\infty}^\infty  \| \mr H(\sigma + i \omega) \|_X \dd \omega  \leq  \kappa^2 \frac{e_{m,c}} {\ul \sigma^m} 2^{m/2} \frac \pi \sigma \int_0^\infty   \| (\mc P_m g^{(m+4)}) (\tau) \|_X \dd \tau.
	\end{align}
Here, since \eqref{eq:3.31} is true for any $c \in (0,\pi)$, we can replace $e_{m,c}$ with $e_m = \ds \min_{c \in (0, \pi)} e_{m,c} $ which finishes the proof.
\end{proof}

\begin{lemma}\label{lem:5.2}
Let $m \geq 0$, $g \in W_+^{2m+4}(\R; X)$, $h := e^\punto g^{(m+4)}$, and $t > 0$ be a fixed real number. We define the function
	\begin{equation}\label{eq:3.14}
		\R \ni \omega \longmapsto  j(\omega) := e^{-\omega} \ds \sum_{\ell = 0}^{m-1} \frac{(\omega - t)^\ell}{\ell !} h^{(\ell)}(t),
	\end{equation}
and, for $n \geq 1$, the integration operator
	\[
		(\partial^{-n} f)(\tau) := \int_t^\tau \int_t^{\omega_1} \cdots \int_t^{\omega_{n-1}} f(\omega_n) \, \dd \omega_n \, \cdots \,\dd \omega_2 \,\dd \omega_1.
	\]
The function
	\[
		p(\tau) := \left\{
		\begin{array}{ll}
		g(\tau), 	& 	\tau \leq t,\\[0.5em]
		\ds \sum_{\ell = 0}^{m+3} \frac{(\tau - t)^\ell}{\ell !} g^{(\ell)}(t)
			 + (\partial^{-m-4} j) (\tau), 	& 	\tau \geq t,
		\end{array}\right.
	\]
satisfies that $p \in W_+^{2m+4} (\R;X)$ and $\mc P_m p^{(m+4)} \equiv 0$ on $(t, \infty)$.
\end{lemma}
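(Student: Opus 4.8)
The plan is to verify the three assertions about $p$ in the order of increasing difficulty: first the regularity $p \in \mc C^{2m+3}(\R;X)$ with $p^{(2m+4)} \in L^1$, then causality, and finally the identity $\mc P_m p^{(m+4)} \equiv 0$ on $(t,\infty)$, from which everything is tied together. The key structural observation is that $p$ is defined piecewise with the junction at $\tau = t$, so all regularity questions reduce to checking that the right-hand branch and the left-hand branch ($g$ itself) agree in enough derivatives at $\tau = t$.

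First I would record that for $0 \le k \le m+3$ the iterated integral $\partial^{k}\partial^{-m-4}j = \partial^{-(m+4-k)}j$ vanishes at $\tau=t$, since each $\partial^{-n}j$ for $n\ge 1$ has a zero of order $n$ at $t$ by construction. Consequently, differentiating the right-hand branch of $p$ up to order $m+3$ and evaluating at $t$ kills the integral contribution entirely, leaving exactly $g^{(k)}(t)$ from the Taylor polynomial; this matches the left branch, so $p \in \mc C^{m+3}(\R;X)$. To push to $\mc C^{2m+3}$, I would instead look at $p^{(m+4)}$ directly: on $(t,\infty)$ the Taylor polynomial of degree $m+3$ is annihilated by $m+4$ derivatives, and $\partial^{m+4}\partial^{-m-4}j = j$, so $p^{(m+4)} = e^{-\punto} q$ on all of $\R$, where $q$ is the function that equals $h$ on $(-\infty,t]$ and equals the degree-$(m-1)$ Taylor polynomial of $h$ at $t$ on $[t,\infty)$. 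Since $g \in W_+^{2m+4}$ gives $g^{(m+4)} \in \mc C^{m-1}$, hence $h = e^{\punto}g^{(m+4)} \in \mc C^{m-1}$, the function $q$ is $\mc C^{m-1}$ (the Taylor polynomial matches $h$ in exactly $m$ derivatives, i.e. up to order $m-1$, at the junction), and therefore $p^{(m+4)} = e^{-\punto}q \in \mc C^{m-1}$, i.e. $p \in \mc C^{2m+3}(\R;X)$. For the $L^1$ claim on $p^{(2m+4)}$: on $(-\infty,t)$ it is $g^{(2m+4)} \in L^1$, and on $(t,\infty)$ it equals $\frac{\dd^m}{\dd\tau^m}(e^{-\punto}q) = e^{-\punto}\mc P_m^{\mathrm{op}}$-type expression whose worst term involves $q^{(m)} = j^{(m)}$ restricted there; since $j$ is a polynomial times $e^{-\punto}$, $j^{(m)}$ is integrable on $(t,\infty)$, so $p^{(2m+4)} \in L^1(\R;X)$.

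Causality of $p$ is immediate: $p = g$ on $(-\infty,t]$ and $g$ is causal with $t>0$, so $p$ vanishes on $(-\infty,0)$. Combining this with the regularity above gives $p \in W_+^{2m+4}(\R;X)$. Finally, for the annihilation identity, I would use the defining form $(\mc P_m \phi)(\tau) = e^{-\tau}(e^{\punto}\phi)^{(m)}(\tau)$ with $\phi = p^{(m+4)}$. On $(t,\infty)$ we have $e^{\tau}p^{(m+4)}(\tau) = e^{\tau}\cdot e^{-\tau}q(\tau) = q(\tau) = \sum_{\ell=0}^{m-1}\frac{(\tau-t)^\ell}{\ell!}h^{(\ell)}(t)$, a polynomial of degree $m-1$ in $\tau$; applying $\frac{\dd^m}{\dd\tau^m}$ annihilates it, so $(\mc P_m p^{(m+4)})(\tau) = 0$ for all $\tau \in (t,\infty)$, as claimed.

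The main obstacle is bookkeeping the exact orders: one must be careful that the degree-$(m+3)$ Taylor polynomial plus the $(m+4)$-fold antiderivative of $j$ produces precisely $m+4$ matching derivatives of $g$ at $t$ and then, after differentiating $m+4$ times, lands on $e^{-\punto}$ times the degree-$(m-1)$ Taylor polynomial of $h$ — so that the remaining smoothness (the extra $m-1$ derivatives out of $2m+3$) comes exactly from the $\mc C^{m-1}$ regularity of $h$, with nothing to spare. Getting these index counts aligned, and checking the $m=0$ edge case (where the sums defining $j$ and the Taylor correction for $h$ are empty and $q \equiv h$, so one only needs $h \in L^1_{\mathrm{loc}}$-type bounds), is the only genuinely delicate part; the rest is routine.
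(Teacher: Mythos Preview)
Your proposal is correct and follows essentially the same route as the paper's proof: the same vanishing of $\partial^{k}\partial^{-m-4}j$ at $t$ for $0\le k\le m+3$ to get $p\in\mc C^{m+3}$, the same identification $p^{(m+4)}=e^{-\punto}q$ with $q$ the $\mc C^{m-1}$ Taylor extension of $h$ to reach $\mc C^{2m+3}$, the same $L^1$ argument via $p^{(2m+4)}=j^{(m)}$ on $(t,\infty)$, and the same annihilation of the degree-$(m-1)$ polynomial by $m$ derivatives for $\mc P_m p^{(m+4)}\equiv 0$. Your explicit mention of causality and the $m=0$ edge case are welcome additions that the paper leaves implicit.
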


\begin{proof}
We start with observing that the function
	\begin{align*} 
		( \partial^k \, \partial^{-m-4} \, j )(\tau) = (\partial^{-m-4 + k} \, j )(\tau)
	\end{align*}
vanishes at $\tau = t$ for $0 \leq k \leq m+3$, because of the definition of the integration operator in \eqref{eq:3.14}. From there, it is not hard to see that $p \in \mc C^{m+3}(\R; X)$.
Next, using $h$ in the lemma, we define the function
	\begin{align*} 
		q(\tau) := \left\{
		\begin{array}{ll}
		h(\tau), 	& 	\tau \leq t,\\[0.5em]
		\ds \sum_{\ell = 0}^{m-1} \frac{(\tau - t)^\ell}{\ell !} h^{(\ell)}(t), 	& 	\tau \geq t.
		\end{array}\right.
	\end{align*}
Since $h \in \mc C^{m-1}(\R;X)$, $q$ is also in $\mc C^{m-1}(\R;X)$, and so is $e^{-\punto} q$. Therefore, we know that
	\begin{align*} 
		e^{-\tau} q(\tau)
		 = \left\{
		\begin{array}{ll}
		g^{(m+4)}(\tau), 	& 	\tau \leq t,\\[0.5em]
		 e^{-\tau} \ds \sum_{\ell = 0}^{m-1} \frac{(\tau - t)^\ell}{\ell !} h^{(\ell)}(t), 	& 	\tau \geq t.
		\end{array}\right\}
		 = 
		\frac{\mr d^{m+4}p}{\mr d \tau^{m+4}} (\tau) 
	\end{align*}
is in $\mc C^{m-1}(\R;X)$, which shows that $p \in \mc C^{2m+3} (\R;X)$. Next, we write
	\[
		p^{(2m+4)}(\tau) = \left\{
		\begin{array}{ll}
		g^{(2m+4)}(\tau), 	& 	\tau \leq t,\\[0.5em]
		j^{(m)} (\tau), 	& 	\tau \geq t.
		\end{array}\right.
	\]
Here $j^{(m)}(\tau)$ takes the form $j^{(m)}(\tau) = e^{-\tau} r(\tau)$, for a polynomial $r$, which implies that $j^{(m)} \in L^1(t,\infty;X)$. This, together with the fact that $g^{(2m+4)} \in L^1(\R;X)$, shows that $p^{(2m+4)} \in L^1(\R;X)$ and hence in $W^{2m+4}_+(\R;X)$. The rest of the proof follows from the fact that
	\begin{align*}
		( \mc P_m p^{(m+4)} )(\tau)  & = e^{-\tau} ( e^\punto p^{(m+4)} )^{(m)}(\tau) \\
		& = 
		e^{-\tau} \frac{\mr d^m}{\mr d\tau^m} \left(  \sum_{\ell = 0}^{m-1} \frac{(\tau - t)^\ell}{\ell !} h^{(\ell)}(t)  \right) (\tau) = 0 \qquad \forall \tau \in (t,\infty).
	\end{align*}
\end{proof}

\begin{proposition}\label{prop:3.5}
Let $g \in W_+^{2m+4}(\R; X)$ with $m \geq 0$, and $\mr F$ be as in \eqref{eq:1.1}. We define $f := \mc L^{-1} \{ \mr F \}$, and $ (\partial_t^\kappa)^m g$ such that
\[
\mc L \{ (\partial_t^\kappa)^m g \} (s) = s^m_\kappa \mr G(s), \qquad \mr G(s) := \mc L \{ g \} (s),
\]
for all $s \in \C_+$. The following estimate holds for all $t > 0$
\[
 \| f * (  (\partial_t^\kappa)^m g - g^{(m)}  ) (t) \|_Y
 \leq 
 \kappa^2 C_1(t^{-1}) \sup_{\Re s = t^{-1}}  \| \mr F (s) \| \int_0^t  \| \mc P_m g^{(m+4)} (\tau) \|_X \mr d \tau,
\]
where
\[
C_1(\sigma) := \frac{C_m}{\sigma \ul \sigma^m},
\]
and $C_m := \dfrac{e}{2 \pi} C^1_m$ is a positive constant depending only on $m$ with $C^1_m$ as defined in \eqref{eq:3.18}.
\end{proposition}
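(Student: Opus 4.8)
The plan is to transport the Laplace-domain estimate of Proposition~\ref{prop:3.3} into the time domain by the inversion formula, and then to localize the resulting right-hand side integral from $(0,\infty)$ to $(0,t)$ using the truncation function $p$ constructed in Lemma~\ref{lem:5.2}.

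First I would set $w := (\partial_t^\kappa)^m g - g^{(m)}$, a causal $X$-valued function with $\mc L\{w\}(s) = \mr H(s) := (s_\kappa^m - s^m)\mr G(s)$, so that $f*w$ is a causal $Y$-valued distribution with transform $\mr F(s)\mr H(s)$. Since $g\in W_+^{2m+4}(\R;X)$, Proposition~\ref{prop:3.3} makes $\omega\mapsto\|\mr H(\sigma+i\omega)\|_X$ integrable for every $\sigma>0$; combined with $\|\mr F(s)\|\le C_F(\Re s)|s|^\mu$ this makes $\mr F\mr H$ absolutely integrable on the line $\Re s=\sigma$, so $(f*w)(t)$ is continuous and equals its inversion integral, giving for every $\sigma>0$
\[
\|(f*w)(t)\|_Y \le \frac{e^{\sigma t}}{2\pi}\,\sup_{\Re s=\sigma}\|\mr F(s)\|\,\int_{-\infty}^\infty \|\mr H(\sigma+i\omega)\|_X\,\dd\omega .
\]
Bounding the inner integral by Proposition~\ref{prop:3.3} and then choosing $\sigma = t^{-1}$ (so $e^{\sigma t}=e$) yields
\[
\|(f*w)(t)\|_Y \le \kappa^2\,C_1(t^{-1})\,\sup_{\Re s=t^{-1}}\|\mr F(s)\|\,\int_0^\infty \|(\mc P_m g^{(m+4)})(\tau)\|_X\,\dd\tau ,
\]
with $C_1(\sigma) = C_m/(\sigma\,\ul\sigma^{\,m})$ and $C_m = \tfrac{e}{2\pi}C_m^1$. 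This is the asserted bound, except with $\int_0^\infty$ rather than $\int_0^t$.

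To finish I would fix $t>0$, take the function $p$ from Lemma~\ref{lem:5.2} (which satisfies $p\in W_+^{2m+4}(\R;X)$ and $\mc P_m p^{(m+4)}\equiv 0$ on $(t,\infty)$), and apply the displayed estimate verbatim to $p$ in place of $g$. Because $f*(\cdot)$ is a causal convolution and $p\equiv g$ on $(-\infty,t)$, the value at $t$ is unchanged: $f*\big((\partial_t^\kappa)^m p - p^{(m)}\big)(t) = (f*w)(t)$; and since $\mc P_m p^{(m+4)}$ vanishes on $(t,\infty)$ and coincides with $\mc P_m g^{(m+4)}$ on $(0,t)$, its integral over $(0,\infty)$ equals $\int_0^t\|(\mc P_m g^{(m+4)})(\tau)\|_X\,\dd\tau$. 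Substituting gives the proposition.

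I expect the main obstacle to be the first, analytic step: justifying that $\mr F$ (with its polynomial growth) and $\mr H$ (with the two extra powers of $s$ of decay supplied, through Proposition~\ref{prop:3.3}, by the Sobolev regularity of $g$) are Laplace transforms of causal distributions whose convolution is a genuine continuous function representable by the inversion integral on any line $\Re s=\sigma$, together with the fact that such a causal convolution evaluated at $t$ ignores the values of its second argument on $(t,\infty)$ — the precise statements being those of \cite{Sayas2016}. Once that is in place, the estimate of Proposition~\ref{prop:3.3} and the localization via Lemma~\ref{lem:5.2} are essentially bookkeeping.
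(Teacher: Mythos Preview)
Your proposal is correct and follows essentially the same route as the paper: apply the inverse Laplace transform on the line $\Re s=\sigma$, pull out $\sup_{\Re s=\sigma}\|\mr F(s)\|$, invoke Proposition~\ref{prop:3.3}, set $\sigma=t^{-1}$, and then localize via the function $p$ of Lemma~\ref{lem:5.2}. The paper glosses over the justification of the inversion formula that you flag as the main obstacle, and your explicit remark that the causality of both $f*(\cdot)$ and $(\partial_t^\kappa)^m(\cdot)$ is what makes the replacement $g\to p$ harmless at time $t$ is exactly the point being used.
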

\begin{proof}
We start with fixing $t > 0$. Next, for any $\sigma > 0$, using the inverse Laplace transformation, we write
\[
 \| f * (  (\partial_t^\kappa)^m g - g^{(m)}  ) (t) \|_Y
 \leq 
\frac{e^{\sigma t}}{2 \pi} \sup_{\Re s = \sigma }  \| \mr F (s) \| \int_{-\infty}^{\infty}  \| \mr H(\sigma + i \omega) \|_X \mr d \omega,
\]
where
$\mr H(s) := s^m_\kappa \mr G(s) - s^m \mr G(s)$. Here, with the help of the Proposition \ref{prop:3.3} and inserting $\sigma = t^{-1}$, we obtain
\begin{equation}\label{eq:3.15}
 \| f * (  (\partial_t^\kappa)^m g - g^{(m)}  ) (t) \|_Y
 \leq 
 \kappa^2 C_1(t^{-1}) \sup_{\Re s = t^{-1}}  \| \mr F (s) \| \int_0^\infty  \| \mc P_m g^{(m+4)} (\tau) \|_X \mr d \tau.
\end{equation}
Now, our goal is to have an integral bound over the interval $(0,t)$. To do that, we consider the function $p$ introduced in Lemma \ref{lem:5.2}. Since $p \in W^{2m+4}_+(\R;X)$, in other words, it satisfies the conditions of this proposition, we can have the estimate \eqref{eq:3.15} for $p$ as well. Therefore, using the properties of this function, we write
\begin{align*}
 \| f * (  (\partial_t^\kappa)^m g - g^{(m)}  ) (t) \|_Y 
 & =  
 \| f * (  (\partial_t^\kappa)^m p - p^{(m)}  ) (t) \|_Y\\
 & \leq
  \kappa^2 C_1(t^{-1}) \sup_{\Re s = t^{-1}}  \| \mr F (s) \| \int_0^\infty  \| \mc P_m p^{(m+4)} (\tau) \|_X \mr d \tau \\
 & =
  \kappa^2 C_1(t^{-1}) \sup_{\Re s = t^{-1}}  \| \mr F (s) \| \int_0^t  \| \mc P_m g^{(m+4)} (\tau) \|_X \mr d \tau.
\end{align*}
This finishes the proof.
\end{proof}

\section{Laplace Domain estimates when $\mu \leq 0$}
In this section we establish some results which will be useful for the error and stability estimates of TRCQ method. We assume that $\mr F$ is as in \eqref{eq:1.1} with $\mu \leq 0$.
\begin{proposition}[Pointwise stability]  \label{prop:4.1}
For all $s \in \C_+$ we have
	\begin{align*}
		 \| \mr F_\kappa (s) \| \leq \Theta_1 (\Re s),
	\end{align*}
where 
	\begin{align} \label{eq:4.3}
		\Theta_1 (\sigma) := C_F \left( \tfrac12 \ul \sigma \right)  \left( \tfrac12 \ul \sigma \right)^\mu
	\end{align}
is non-increasing on $(0,\infty)$ and bounded by an inverse power of $\sigma$ at $0$.
\end{proposition}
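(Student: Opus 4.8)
The plan is to bound $\|\mr F_\kappa(s)\|$ by unwinding the definition $\mr F_\kappa(s)=\mr F(s_\kappa)$ and applying the growth hypothesis \eqref{eq:1.1} to the argument $s_\kappa$. So first I would write
\[
\|\mr F_\kappa(s)\| = \|\mr F(s_\kappa)\| \le C_F(\Re s_\kappa)\,|s_\kappa|^\mu,
\]
which is legitimate once we know $s_\kappa\in\C_+$; that fact is supplied by Proposition \ref{prop:2.3}(a), which moreover gives the quantitative lower bound $\Re s_\kappa\ge\tfrac12\min\{1,\Re s\}=\tfrac12\,\ul{\Re s}$.

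The next step is to control the two factors $C_F(\Re s_\kappa)$ and $|s_\kappa|^\mu$ using that $\mu\le 0$. Since $C_F$ is non-increasing and $\Re s_\kappa\ge\tfrac12\ul{\Re s}$, we get $C_F(\Re s_\kappa)\le C_F(\tfrac12\ul{\Re s})$. For the second factor, because $\mu\le 0$ the map $x\mapsto x^\mu$ is \emph{non-increasing} on $(0,\infty)$; combined with $|s_\kappa|\ge\Re s_\kappa\ge\tfrac12\ul{\Re s}$ this yields $|s_\kappa|^\mu\le(\tfrac12\ul{\Re s})^\mu$. Multiplying the two bounds gives exactly $\|\mr F_\kappa(s)\|\le C_F(\tfrac12\ul{\Re s})(\tfrac12\ul{\Re s})^\mu=\Theta_1(\Re s)$, as defined in \eqref{eq:4.3}.

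Finally I would verify the two claimed qualitative properties of $\Theta_1$. For monotonicity: $\sigma\mapsto\ul\sigma=\min\{1,\sigma\}$ is non-decreasing, hence $\sigma\mapsto\tfrac12\ul\sigma$ is non-decreasing; composing with the non-increasing $C_F$ makes $\sigma\mapsto C_F(\tfrac12\ul\sigma)$ non-increasing, and composing $\sigma\mapsto\tfrac12\ul\sigma$ with the non-increasing $x\mapsto x^\mu$ (again using $\mu\le0$) makes $(\tfrac12\ul\sigma)^\mu$ non-increasing too; a product of two non-negative non-increasing functions is non-increasing. For the behavior near $0$: when $\sigma$ is small, $\ul\sigma=\sigma$, so $\Theta_1(\sigma)=C_F(\sigma/2)(\sigma/2)^\mu$; the hypothesis after \eqref{eq:1.1} says $C_F(\sigma/2)$ is polynomially bounded (dominated by a negative power of $\sigma$), and $(\sigma/2)^\mu$ is itself a power of $\sigma$, so the product is bounded by an inverse power of $\sigma$ as $\sigma\to0$.

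I do not expect a genuine obstacle here — the only point requiring care is the direction of monotonicity of $x\mapsto x^\mu$, which is reversed precisely because $\mu\le 0$, and this is exactly where the standing sign assumption of the section is used. Everything else is bookkeeping with the already-established Proposition \ref{prop:2.3}(a) and the monotonicity of $C_F$.
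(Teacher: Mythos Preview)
Your proof is correct and follows essentially the same approach as the paper: apply \eqref{eq:1.1} at $s_\kappa$, then use Proposition~\ref{prop:2.3}(a) together with the monotonicity of $C_F$ and of $x\mapsto x^\mu$ (for $\mu\le 0$) to replace $\Re s_\kappa$ and $|s_\kappa|$ by $\tfrac12\ul{\Re s}$. Your additional verification of the monotonicity and near-zero behavior of $\Theta_1$ is more explicit than the paper's, which simply asserts these properties.
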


\begin{proof}
Since $\Re s_\kappa > 0$ (see Proposition \ref{prop:2.3}(a)), \eqref{eq:1.1} implies
	\begin{align*}
		 \| \mr F(s_\kappa) \| \leq C_F(\Re s_\kappa) \, |s_\kappa|^\mu.
	\end{align*}
Using Proposition \ref{prop:2.3}(a) and the fact that $C_F$ is non-decreasing, we write
	\begin{align*}
		C_F(\Re s_\kappa)  \leq C_F \left( \tfrac12 \min\{ 1, \Re s \}  \right).
	\end{align*}
Next, using $\mu \leq 0$, we obtain
	\begin{align*}
		|s_\kappa|^\mu \leq  | \Re s_\kappa |^\mu \leq  \left( \tfrac12 \min\{ 1, \Re s \}  \right)^\mu,
	\end{align*}
which finishes the proof.
\end{proof}

\begin{proposition}[Pointwise approximation]\label{prop:4.2}
The following holds
\begin{enumerate}
\item[{\rm (a)}] $ \| \mr F'(s) \|  \leq \Theta_2 (\Re s) |s|^\mu$ for all $s \in \C_+$.
\item[{\rm (b)}] $ \| \mr F_\kappa (s) - \mr F(s)  \|  \leq \Theta_2 \left(\tfrac12 \min\{1,\Re s\}\right) \Theta_3 (|\kappa s|) \kappa^2 |s|^{\mu +3}$ for all $s \in \C_+$ with $|\kappa s| < c_0$.
\end{enumerate}
Here
	\begin{align}
		\Theta_2(\sigma) & := \frac{2^{1-\mu}}\sigma  C_F(\tfrac12 \sigma), \label{eq:4.7a}
	\end{align}
defined on $(0,\infty)$, is non-increasing, whereas
	\begin{align*}
		\Theta_3 (\sigma) & := D(\sigma) (1 - \sigma^2 D(\sigma))^\mu,
	\end{align*}
defined on $(0,c_0)$, is increasing and it diverges as $\sigma \to c_0$. Function $D$ and constant $c_0$ are as in Proposition \ref{prop:2.2}.
\end{proposition}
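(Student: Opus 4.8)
\textbf{Proof proposal for Proposition \ref{prop:4.2}.}

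The plan is to prove the two parts separately, with part (a) feeding into part (b). For part (a), I would use the Cauchy integral representation of the derivative of an analytic operator-valued function, exactly as in the cited \cite[Proposition 4.5.3]{Sayas2016}. Fix $s\in\C_+$ and integrate over the circle $\Xi(s):=\{z\in\C:|z-s|=\tfrac12\Re s\}$, so that
\[
\mr F'(s)=\frac1{2\pi i}\int_{\Xi(s)}\frac{\mr F(z)}{(z-s)^2}\,\dd z .
\]
On this circle one has the elementary geometric facts $\Re z\ge\tfrac12\Re s$ (the circle stays in the right half of $\C_+$) and $|z|\ge\tfrac12|s|$ (since $|z|\ge|s|-\tfrac12\Re s\ge|s|-\tfrac12|s|$). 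Plugging the bound \eqref{eq:1.1}, using that $C_F$ is non-increasing and $\mu\le 0$ so $|z|^\mu\le(\tfrac12|s|)^\mu=2^{-\mu}|s|^\mu$, and bounding the contour length by $2\pi\cdot\tfrac12\Re s$ against the denominator $(\tfrac12\Re s)^2$, gives $\|\mr F'(s)\|\le \tfrac{2^{1-\mu}}{\Re s}C_F(\tfrac12\Re s)\,|s|^\mu=\Theta_2(\Re s)|s|^\mu$, which is exactly the claim. The monotonicity of $\Theta_2$ and its inverse-power behaviour at $0$ are immediate from the corresponding properties of $C_F$.

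For part (b), I would use the Mean Value Theorem along the segment joining $s$ and $s_\kappa$: since $\mr F$ is analytic and $\C_+$ is convex, there is $\lambda\in(0,1)$ with
\[
\|\mr F_\kappa(s)-\mr F(s)\|=\|\mr F(s_\kappa)-\mr F(s)\|\le\|\mr F'(z)\|\,|s_\kappa-s|,\qquad z:=\lambda s_\kappa+(1-\lambda)s .
\]
By part (a), $\|\mr F'(z)\|\le\Theta_2(\Re z)|z|^\mu$. The task is then to control $\Re z$ and $|z|$ from below, and $|s_\kappa-s|$ from above. For $\Re z$: convexity gives $\Re z\ge\min\{\Re s_\kappa,\Re s\}\ge\tfrac12\min\{1,\Re s\}$ by Proposition \ref{prop:2.3}(a), and since $\Theta_2$ is non-increasing this yields $\Theta_2(\Re z)\le\Theta_2(\tfrac12\min\{1,\Re s\})$. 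For $|z|$: write $|z|\ge|s|\,\Re(z/s)$ (as $|w|\ge\Re w$), and $\Re(z/s)=\lambda\Re(s_\kappa/s)+(1-\lambda)\ge\min\{\Re(s_\kappa/s),1\}$; by Proposition \ref{prop:2.3}(d), when $|\kappa s|<c_0$ this is $\ge 1-|\kappa s|^2D(|\kappa s|)>0$, so $|z|\ge|s|(1-|\kappa s|^2D(|\kappa s|))$ and hence (using $\mu\le 0$) $|z|^\mu\le|s|^\mu(1-|\kappa s|^2D(|\kappa s|))^\mu$. Finally $|s_\kappa-s|\le\kappa^2 D(|\kappa s|)|s|^3$ by Proposition \ref{prop:2.3}(c) with $m=1$, recalling $E_1\equiv D$. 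Multiplying the three estimates together collects the factor $D(|\kappa s|)(1-|\kappa s|^2D(|\kappa s|))^\mu=\Theta_3(|\kappa s|)$ and the power $|s|^{\mu+3}$, giving exactly the stated inequality; the properties of $\Theta_3$ on $(0,c_0)$ (positivity of $1-\sigma^2D(\sigma)$, monotonicity, divergence at $c_0$ when $\mu\ne 0$) follow from the properties of $D$ established in Proposition \ref{prop:2.2}.

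The only delicate point is the lower bound on $|z|$: one must be careful that $z$ is a genuine convex combination in $\C_+$ and that the quantity $\Re(s_\kappa/s)$ — rather than $|s_\kappa|$ or $\Re s_\kappa$ directly — is what controls $|z|$ from below, which is why Proposition \ref{prop:2.3}(d) is the right tool and why the restriction $|\kappa s|<c_0$ (ensuring $1-|\kappa s|^2D(|\kappa s|)>0$) is needed. Everything else is bookkeeping with the monotonicity of $C_F$, $\Theta_2$, $D$, and the constraint $\mu\le 0$ that makes the maps $x\mapsto x^\mu$ order-reversing.
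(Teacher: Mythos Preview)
Your proposal is correct and follows essentially the same route as the paper: Cauchy's integral formula on the circle $\Xi(s)=\{z:|z-s|=\tfrac12\Re s\}$ together with the geometric bounds $\Re z\ge\tfrac12\Re s$, $|z|\ge\tfrac12|s|$ for part (a), and for part (b) the Mean Value inequality along the segment $[s,s_\kappa]\subset\C_+$, with $\Re z$ controlled via Proposition~\ref{prop:2.3}(a), $|z|$ via $|z|\ge|s|\,\Re(z/s)$ and Proposition~\ref{prop:2.3}(d), and $|s_\kappa-s|$ via Proposition~\ref{prop:2.3}(c) with $m=1$. Your remark that $\Theta_3$ diverges at $c_0$ only when $\mu\neq 0$ is in fact more accurate than the statement in this (long) version of the proposition.
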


\begin{proof}
To prove (a), we use the ideas presented in the proof of \cite[Proposition 4.5.3]{Sayas2016}. We start with defining the curve
	\begin{align*}
		\Xi (s) := \left\{ z \in \C : |z - s|  = \frac {\Re s} 2 \right\},
	\end{align*}
with positive orientation. Using the Cauchy integral formula
	\begin{align*}
		\mr F'(s) 
		& = \frac1{2\pi i} \int_{\Xi(s)} \frac {\mr F(z)}{ (z-s)^2 } \dd z,
	\end{align*}
and $| \Xi(s) | = \pi \Re s$, we have
	\begin{align} \label{eq:4.10}
		 \| \mr F'(s) \|
		& \leq \frac1{2\pi} | \Xi(s) | \frac 1 {(\Re s/2)^2} \max_{z \in \Xi(s)}  \| \mr F(z) \|
		= \frac2{\Re s}  \max_{z \in \Xi(s)}  \| \mr F(z) \|.
	\end{align}
Next, for every $z \in \Xi(s)$ we write
	\begin{align*}
		|s| - |z| \leq |s - z| = \frac {\Re s} 2 \leq \frac {|s|} 2, \qquad  \Re s - \Re z \leq |s - z| = \frac {\Re s} 2 
	\end{align*}
which implies 
	\begin{align*}
		\frac {|s|} 2 \leq |z|, \qquad \frac{\Re s}{2} \leq \Re z.
	\end{align*}
Therefore, using the fact that $C_F$ and $(\cdot)^\mu$ are non-increasing on $(0,\infty)$, we can write
	\begin{align*}
		 \| \mr F(z) \| \leq C_F(\Re z) |z|^\mu \leq C_F(\tfrac12 \Re s) \frac{|s|^\mu}{2^\mu}.
	\end{align*}
This, together with \eqref{eq:4.10}, finishes the proof of (a). 

To prove (b), using the Mean Value Theorem, we write
	\begin{align}\label{eq:4.14}
		 \| \mr F_\kappa (s) - \mr F(s)  \|  \leq  \| \mr F'( \lambda s_\kappa + (1-\lambda) s) \| \, |s_\kappa - s|,
	\end{align}
for some $\lambda \in (0,1)$. Now, we first work on the first part of the right-hand side, and using (a), write the following 
	\begin{align}\label{eq:4.15}
		 \| \mr F'( \lambda s_\kappa + (1-\lambda) s) \| 
		 \leq 
		 \Theta_2  \left(  \Re ( \lambda s_\kappa + (1-\lambda) s ) \right)
		  \left| \lambda s_\kappa + (1-\lambda) s \right|^\mu.
	\end{align}
Here, Proposition \ref{prop:2.3}(a) implies
	\begin{align}\label{eq:4.16}
		\Re  \left( \lambda s_\kappa + (1- \lambda)s \right) 
		& \geq \min \{ \Re s_\kappa, \Re s \} \nonumber \\
		& \geq \min \left\{ \frac { \min \{1,\Re s\} }2, \Re s \right\} =\frac { \min \{1,\Re s\} }2.
	\end{align}
Next, using Proposition \ref{prop:2.3}(d), for every $s \in \C_+$ with $|\kappa s| < c_0$ we write
	\begin{align}\label{eq:4.17}
		|\lambda s_\kappa + (1-\lambda) s| 
		& = |s| \, \left|\lambda \frac{s_\kappa}{s} + (1-\lambda)\right| \nonumber\\
		& \geq |s| \, \Re \left(\lambda \frac{s_\kappa}{s} + (1-\lambda) \right) \nonumber\\
		& \geq |s| \, \min \left\{ \Re \frac {s_\kappa} s ,1 \right\} \geq |s|  \left( 1 - |\kappa s|^2 D(|\kappa s|) \right).
	\end{align}
Therefore, combining \eqref{eq:4.15}-\eqref{eq:4.17} with the fact that $\Theta_2$ and $(\cdot)^\mu$ are non-increasing on $(0,\infty)$, we obtain
	\begin{align} \label{eq:4.18}
		 \| \mr F'( \lambda s_\kappa + (1-\lambda) s) \| \leq \Theta_2 \left(\tfrac12 \min\{1,\Re s\}\right)\left( 1 - |\kappa s|^2 D(|\kappa s|) \right)^\mu |s|^\mu.
	\end{align}
Now, we bound the second part of the right-hand side of \eqref{eq:4.14} with the help of Proposition \ref{prop:2.3}(c) (and that $E_1 = D$)
	\begin{align*}
		|s_\kappa - s| \leq D(|\kappa s|) \kappa^2 |s|^3 \qquad |\kappa s| < \pi.
	\end{align*}
This, together with \eqref{eq:4.14} and \eqref{eq:4.18}, finishes the proof.
\end{proof}
Next result will help us to estimate integrals with inverse powers of $|s|$.
\begin{lemma}\label{lem:4.3}
The following holds for all $\sigma > 0$, $\alpha > 1$ and $c > 0$:
\begin{enumerate}[(a)]
\item[{\rm (a)}] 
$\ds \int_{|\sigma + i \omega| \geq c/\kappa} |\sigma + i \omega|^{-\alpha} \, \dd \omega 
\leq 
\frac{2 \alpha}{\alpha - 1}  \left( \frac{\kappa}{c} \right)^{\alpha-1}.$
\item[{\rm (b)}] 
$\ds \int_{-\infty}^\infty |\sigma + i \omega|^{-\alpha} \, \dd \omega 
\leq 
\frac{2}{\sigma^\alpha} + \frac{2}{\alpha - 1}.$
\end{enumerate}
\end{lemma}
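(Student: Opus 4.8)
The plan is to reduce both inequalities to two elementary observations: that $|\sigma + i\omega|$ is non-decreasing in $|\omega|$, so that both $|\sigma+i\omega| \ge \sigma$ and $|\sigma+i\omega| \ge |\omega|$ hold; and that for $\alpha > 1$ the tail integral $\int_a^\infty \omega^{-\alpha}\,\dd\omega = a^{1-\alpha}/(\alpha-1)$ is finite. In each part I would also use that the integrand is even in $\omega$, reducing everything to integrals over $\omega \ge 0$ and a factor of $2$.

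For part (a), I would split the region $\{\omega \in \R : |\sigma+i\omega| \ge c/\kappa\}$ according to whether $|\omega| \le c/\kappa$ or $|\omega| \ge c/\kappa$. On the first piece, whose total length is at most $2c/\kappa$, the defining constraint $|\sigma+i\omega| \ge c/\kappa$ forces $|\sigma+i\omega|^{-\alpha} \le (\kappa/c)^{\alpha}$, so this piece contributes at most $(2c/\kappa)(\kappa/c)^{\alpha} = 2(\kappa/c)^{\alpha-1}$. On the second piece I would use $|\sigma+i\omega| \ge |\omega|$ together with $2\int_{c/\kappa}^\infty \omega^{-\alpha}\,\dd\omega = \tfrac{2}{\alpha-1}(\kappa/c)^{\alpha-1}$. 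Adding the two contributions gives $\bigl(2 + \tfrac{2}{\alpha-1}\bigr)(\kappa/c)^{\alpha-1} = \tfrac{2\alpha}{\alpha-1}(\kappa/c)^{\alpha-1}$, which is exactly the asserted bound.

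For part (b), I would split $\R$ into $|\omega| \le 1$ and $|\omega| \ge 1$. On the bounded part, $|\sigma+i\omega| \ge \sigma$ bounds the integrand by $\sigma^{-\alpha}$, contributing at most $2\sigma^{-\alpha}$; on the unbounded part, $|\sigma+i\omega| \ge |\omega|$ and $2\int_1^\infty \omega^{-\alpha}\,\dd\omega = \tfrac{2}{\alpha-1}$. Summing gives the stated $2/\sigma^\alpha + 2/(\alpha-1)$.

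No step is a genuine obstacle here; the only points deserving any care are invoking $\alpha > 1$ so that the tail integrals converge, and picking the split radii ($c/\kappa$ in (a), $1$ in (b)) so that each of the two regimes is controlled by whichever of the two lower bounds $|\sigma+i\omega| \ge \sigma$, $|\sigma+i\omega| \ge |\omega|$ is available there. Everything else is a one-line computation; this lemma is the detailed-section counterpart of Lemma~\ref{lem:pap:4.2}, whose short-paper proof follows precisely this scheme.
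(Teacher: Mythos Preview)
Your proposal is correct and follows essentially the same argument as the paper: the same splits at $|\omega|=c/\kappa$ for (a) and $|\omega|=1$ for (b), the same pointwise bounds $|\sigma+i\omega|\ge c/\kappa$ on the bounded piece of (a), $|\sigma+i\omega|\ge\sigma$ on the bounded piece of (b), and $|\sigma+i\omega|\ge|\omega|$ on the tails, and the same tail computation $\int_a^\infty \omega^{-\alpha}\,\dd\omega = a^{1-\alpha}/(\alpha-1)$. The only cosmetic difference is that the paper inserts a change of variables $\tilde\omega=\kappa\omega$ in the tail of (a) before integrating, whereas you integrate directly; the outcome is identical.
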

\begin{proof}
We will estimate the integral in (a) separately depending on whether $\omega$ is close to $0$ or not. To do that, for fixed $\sigma, \kappa$ and $c$, we define the following domains of integration
	\begin{align*}
		I^1 &:=  \left\{ \omega \in \R: |\sigma + i \omega| \geq c/\kappa, |\omega| \leq c / \kappa  \right\}, \\
		I^2 & :=  \left\{ \omega \in \R: |\sigma + i \omega| \geq c/\kappa, |\omega| \geq c / \kappa  \right\} = \{ \omega \in \R: |\omega| \geq c/\kappa \},
	\end{align*} 
and write
	\begin{align*}
		\int_{|\sigma + i \omega| \geq c/\kappa} |\sigma + i \omega|^{-\alpha} \, \dd \omega
		=
		\int_{I^1} |\sigma + i \omega|^{-\alpha} \, \dd \omega
		+
		\int_{I^2} |\sigma + i \omega|^{-\alpha} \, \dd \omega.
	\end{align*}
First integral can be estimated using the fact that $\alpha > 0$, $|I^1| \leq \dfrac {2c} \kappa$, and $|\sigma + i \omega| \geq \dfrac c \kappa$ on $I^1$ in the following way
	\begin{align} \label{eq:4.23}
		\int_{I^1}  |\sigma + i \omega|^{-\alpha} \, \dd \omega \leq |I^1|  \left( \frac{c}{\kappa} \right)^{-\alpha} \leq 2  \left( \frac{\kappa}{c} \right)^{\alpha - 1}.
	\end{align}
Next, we work on the second integral by introducing the change of variables $\tilde \omega = \omega \kappa$
	\begin{align*}
		\int_{I^2} |\sigma + i \omega|^{-\alpha} \, \dd \omega 
		& =
		\kappa^{\alpha - 1} \int_{|\omega| \geq c} |\sigma \kappa + i \omega|^{-\alpha} \, \dd \omega \nonumber\\
		& \leq 
		\kappa^{\alpha - 1} \int_{|\omega| \geq c} | \omega|^{-\alpha} \, \dd \omega
		=
		2 \kappa^{\alpha - 1} \int_c^\infty  \omega^{-\alpha} \, \dd \omega.
	\end{align*}
Here, since $\alpha > 1$, we can compute the following integral exactly
	\begin{align*}
		 \int_c^\infty  \omega^{-\alpha} \, \dd \omega = \frac {c^{1-\alpha}}  {\alpha - 1} ,
	\end{align*}
and therefore we obtain
	\begin{align*}
		\int_{I^2} |\sigma + i \omega|^{-\alpha} \, \dd \omega 
		\leq
		\frac{2}{\alpha - 1} \left( \frac{\kappa}{c} \right)^{\alpha - 1}.
	\end{align*}
Combining this with \eqref{eq:4.23}, we finish the proof of (a). To show (b), using the fact that $|\sigma + i \omega| \geq |\sigma|, |\omega|$ we write
\begin{subequations}
	\begin{align}
		 \int_{-\infty}^\infty |\sigma + i \omega|^{-\alpha} \, \dd \omega 
		 & \leq 
		 2 \int_0^1 \sigma^{-\alpha} \, \dd \omega + 2 \int_1^\infty \omega^{-\alpha} \, \dd \omega  \label{eq:4.27a}\\
		 & =
		 \frac2 {\sigma^\alpha} + \frac2{\alpha-1}, \label{eq:4.27b}
	\end{align}
where we computed the second integral on the right-hand side of \eqref{eq:4.27a} using the fact that $\alpha > 1$. This finishes the proof.
\end{subequations}
\end{proof}
Next result uses the pointwise stability and approximation results, and is the key ingredient in the proof of Proposition \ref{prop:5.1}.
\begin{proposition}\label{prop:4.4}
For any $-3<\mu\leq0$, $\alpha > \mu + 4$, $0 < c < c_0 < \pi$ and $\sigma > 0$, the following holds
	\begin{align}\label{eq:4.28}
		\int_{-\infty}^\infty  \| \mr F_\kappa (\sigma + i \omega) - \mr F(\sigma + i \omega) \| \, |\sigma + i \omega|^{-\alpha} \, \dd \omega 
		\leq 
		\kappa^2 e_1(\sigma) + \kappa^{\alpha - 1} e_2(\sigma),
	\end{align}
where
\begin{subequations}\label{eq:4.29}
	\begin{align}
		e_1(\sigma) & := \Theta_2 (\tfrac12 \ul \sigma ) \Theta_3 (c)  \left( \frac2 {\sigma^{\alpha - \mu -3}} + \frac 2 {\alpha - \mu - 4} \right),\\
		e_2(\sigma)& := \frac{2 \alpha}{\alpha - 1} \frac1 {c^{\alpha-1}} \left( \Theta_1 (\sigma) + C_F(\sigma) \sigma^\mu \right).
	\end{align}
\end{subequations}
\end{proposition}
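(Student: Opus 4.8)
The plan is to split the integration region into a low-frequency part, where $|\kappa s|$ is small enough for the pointwise approximation bound of Proposition~\ref{prop:4.2}(b) to apply, and a high-frequency part, where we would simply use the triangle inequality together with the pointwise stability bounds for $\mr F_\kappa$ and $\mr F$ separately. Concretely, fix $c\in(0,c_0)$, write $s=s(\omega):=\sigma+i\omega$, and set
\[
I^1:=\{\omega\in\R:\,|\sigma+i\omega|\le c/\kappa\},\qquad
I^2:=\{\omega\in\R:\,|\sigma+i\omega|\ge c/\kappa\},
\]
which cover $\R$. The left-hand side of \eqref{eq:4.28} is then bounded by the sum of the integrals over $I^1$ and $I^2$, and these two pieces would be estimated independently.

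On $I^1$ we have $|\kappa s|\le c<c_0$, so Proposition~\ref{prop:4.2}(b) applies; since $\Re s=\sigma$ (so $\min\{1,\Re s\}=\ul\sigma$) and $\Theta_3$ is increasing on $(0,c_0)$, the integrand over $I^1$ is at most $\Theta_2(\tfrac12\ul\sigma)\,\Theta_3(c)\,\kappa^2\,|s|^{\mu+3-\alpha}$. Enlarging $I^1$ to all of $\R$ and applying Lemma~\ref{lem:4.3}(b) with exponent $\alpha-\mu-3$ — which exceeds $1$ precisely because $\alpha>\mu+4$ — produces the contribution $\kappa^2 e_1(\sigma)$, with $e_1$ exactly as in \eqref{eq:4.29}. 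On $I^2$ we use $\|\mr F_\kappa(s)-\mr F(s)\|\le\|\mr F_\kappa(s)\|+\|\mr F(s)\|$: Proposition~\ref{prop:4.1} gives $\|\mr F_\kappa(s)\|\le\Theta_1(\sigma)$, while \eqref{eq:1.1}, together with $|s|\ge\sigma$ and $\mu\le0$, gives $\|\mr F(s)\|\le C_F(\sigma)|s|^\mu\le C_F(\sigma)\sigma^\mu$. Hence the integrand on $I^2$ is at most $(\Theta_1(\sigma)+C_F(\sigma)\sigma^\mu)\,|s|^{-\alpha}$, a factor that is constant in $\omega$; since $I^2=\{\omega:\,|\sigma+i\omega|\ge c/\kappa\}$, Lemma~\ref{lem:4.3}(a) bounds $\int_{I^2}|s|^{-\alpha}\dd\omega$ by $\tfrac{2\alpha}{\alpha-1}(\kappa/c)^{\alpha-1}$, producing the contribution $\kappa^{\alpha-1}e_2(\sigma)$. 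Adding the two pieces gives \eqref{eq:4.28}.

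The argument is largely mechanical once the region split is fixed; the points that need a little care are (i) checking that $\alpha>\mu+4$ is exactly the condition making the $I^1$-integral converge and reproducing the stated constant in $e_1$; (ii) using $\mu\le0$, so that $|s|^\mu$ is non-increasing in $|s|$, which is what turns $\|\mr F(s)\|\le C_F(\sigma)|s|^\mu$ into a bound that is constant in $\omega$ on $I^2$; and (iii) noting that the hypothesis $\mu>-3$ serves only to guarantee $\alpha>\mu+4>1$, so that Lemma~\ref{lem:4.3} applies. I do not expect any genuine obstacle beyond this bookkeeping of constants.
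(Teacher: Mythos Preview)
Your proposal is correct and follows essentially the same approach as the paper: the same split into $I^1$ and $I^2$, the use of Proposition~\ref{prop:4.2}(b) with the monotonicity of $\Theta_3$ and Lemma~\ref{lem:4.3}(b) on $I^1$, and the triangle inequality with Proposition~\ref{prop:4.1}, \eqref{eq:1.1}, and Lemma~\ref{lem:4.3}(a) on $I^2$. Your remarks (i)--(iii) make explicit a couple of steps (notably $|s|^\mu\le\sigma^\mu$ from $\mu\le0$, and the role of $\mu>-3$ in ensuring $\alpha>1$) that the paper leaves implicit.
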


\begin{proof}
For fixed $\sigma, \kappa$ and $c$,  we will make use of the following domains of integration
	\begin{align*}
		I^1 :=  \left\{ \omega \in \R : | \sigma + i \omega | \leq \frac c  \kappa \right\}, \qquad 
		I^2 :=  \left\{ \omega \in \R : | \sigma + i \omega | \geq \frac c  \kappa \right\},
	\end{align*}
and the notation
	\begin{align*}
		s = s(\omega) := \sigma + i \omega.
	\end{align*}
We start with splitting the integral in \eqref{eq:4.28} in the following way
	\begin{align}\label{eq:4.34}
		\int_{-\infty}^\infty  \| \mr F_\kappa (s) - \mr F(s) \| \, |s|^{-\alpha} \, \dd \omega
		& \leq
		\int_{I^1}  \| \mr F_\kappa (s) - \mr F(s) \| \, |s|^{-\alpha} \, \dd \omega \nonumber \\
		& \quad + \int_{I^2}  \left(  \| \mr F_\kappa (s) \| +  \| \mr F(s) \|   \right) \, |s|^{-\alpha} \, \dd \omega.
	\end{align}
For the first integral, we use Proposition \ref{prop:4.2}(b) and the fact that $\Theta_3$ is increasing to write
	\begin{align} \label{eq:4.35}
		\int_{I^1}  \| \mr F_\kappa (s) - \mr F(s) \| \, |s|^{-\alpha} \, \dd \omega
		\leq 
		\kappa^2 \Theta_2 (\tfrac12 \ul \sigma)  \int_{I^1}  \Theta_3 (|\kappa s|)   \, |s|^{-\alpha + \mu + 3} \, \dd \omega \nonumber \\
		\leq 
		\kappa^2 \Theta_2 (\tfrac12 \ul \sigma) \Theta_3 (c) \int_{-\infty}^\infty  |s|^{-(\alpha - \mu - 3)} \, \dd \omega.
	\end{align}
Here since $\alpha - \mu -3 >1$, Lemma \ref{lem:4.3}(b) implies
	\begin{align*}
		\int_{-\infty}^\infty  |s|^{-(\alpha - \mu - 3)} \, \dd \omega
		\leq
		\frac2 {\sigma^{\alpha - \mu -3}} + \frac 2 {\alpha - \mu - 4},
	\end{align*}
which, together with \eqref{eq:4.35}, gives us the first half of the bound in  \eqref{eq:4.28}
	\begin{align} \label{eq:4.37}
		\int_{I^1}  \| \mr F_\kappa (s) - \mr F(s) \| \, |s|^{-\alpha} \, \dd \omega \leq \kappa^2 e_1(\sigma).
	\end{align}
Now we bound the other integral in \eqref{eq:4.34} using Proposition \ref{prop:4.1} and \eqref{eq:1.1}
	\begin{align} \label{eq:4.38}
		\int_{I^2} \left( \| \mr F_\kappa (s) \| + \| \mr F(s) \| \right) \, |s|^{-\alpha} \, \dd \omega 
		& \leq 
		\left( \Theta_1 (\sigma) + C_F(\sigma) \sigma^\mu \right) \int_{I^2} |s|^{-\alpha} \, \dd \omega \nonumber \\
		& \leq
		\left( \Theta_1 (\sigma) + C_F(\sigma) \sigma^\mu \right)\frac{2 \alpha}{\alpha - 1}  \left( \frac{\kappa}{c} \right)^{\alpha-1} \!\!\! = \kappa^{\alpha - 1} e_2(\sigma),
	\end{align}
where for the last line we used Lemma \ref{lem:4.3}(a). This bound and \eqref{eq:4.37} finishes the proof. 
\end{proof}
\begin{corollary}\label{cor:4.5}
If $-1 < \mu \leq 0$ and $\alpha = \lfloor \mu + 5 \rfloor$, then for all $\sigma > 0$ we have
	\begin{align*}
		\int_{-\infty}^\infty  \| \mr F_\kappa (\sigma + i \omega) - \mr F(\sigma + i \omega) \| \, |\sigma + i \omega|^{-\alpha} \, \dd \omega 
		\leq 
		\kappa^2 \, C_\mu^1 \, C_2(\sigma),
	\end{align*}
where
\begin{equation} \label{eq:4.39}
C_2(\sigma) :=  C_F(\tfrac14 \ul\sigma) \frac {1}{\ul \sigma^{2 + \delta}},
\qquad 
\delta := \floor\mu - \mu + 1 \in (0,1],
\end{equation}
and $C_{\mu}^1$ is a positive constant depending only on $\mu$.
\end{corollary}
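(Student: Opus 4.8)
The plan is to specialize Proposition~\ref{prop:4.4} and then absorb everything into the single profile $C_2$. First I would make the value $\alpha=\lfloor\mu+5\rfloor$ explicit in the two regimes: for $-1<\mu<0$ one has $\alpha=4$, $\lfloor\mu\rfloor=-1$ and hence $\delta=-\mu$, whereas for $\mu=0$ one has $\alpha=5$ and $\delta=1$. In both cases $\alpha>\mu+4$, so Proposition~\ref{prop:4.4} is applicable for every $c\in(0,c_0)$, and a short check shows the uniform identity $\alpha-\mu-3=1+\delta\in(1,2]$. Fixing an arbitrary $c\in(0,c_0)$, Proposition~\ref{prop:4.4} bounds the integral by $\kappa^2 e_1(\sigma)+\kappa^{\alpha-1}e_2(\sigma)$; since $\kappa\in(0,1]$ and $\alpha-1\ge 3\ge 2$ we have $\kappa^{\alpha-1}\le\kappa^2$, so it remains to prove $e_1(\sigma)+e_2(\sigma)\le C_{\mu,c}\,C_2(\sigma)$ for a constant $C_{\mu,c}$ depending only on $\mu$ and $c$.

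Next I would bound $e_1$ and $e_2$ term by term by a multiple of $C_2(\sigma)=C_F(\tfrac14\ul\sigma)/\ul\sigma^{2+\delta}$, using only that $C_F$ and $(\cdot)^\mu$ are non-increasing on $(0,\infty)$ together with the trivial inequality $\sigma^{-r}\le\ul\sigma^{-r}$ for $r>0$. For $e_1$ I substitute $\Theta_2(\tfrac12\ul\sigma)=2^{2-\mu}C_F(\tfrac14\ul\sigma)/\ul\sigma$, replace $\sigma^{-(\alpha-\mu-3)}$ by $\ul\sigma^{-(1+\delta)}$, and combine $\ul\sigma^{-1}\ul\sigma^{-(1+\delta)}=\ul\sigma^{-(2+\delta)}$; the summand with $2/(\alpha-\mu-4)$ is handled by $\ul\sigma^{-1}\le\ul\sigma^{-(2+\delta)}$. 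Since $-1<\mu\le 0$, the factor $2^{2-\mu}$ is bounded by $8$ and $\Theta_3(c)$ is a finite constant. For $e_2$ I bound $\Theta_1(\sigma)=C_F(\tfrac12\ul\sigma)(\tfrac12\ul\sigma)^\mu\le 2^{-\mu}C_F(\tfrac14\ul\sigma)\ul\sigma^\mu$ and, using $\sigma\ge\ul\sigma$, also $C_F(\sigma)\sigma^\mu\le C_F(\tfrac14\ul\sigma)\ul\sigma^\mu$; finally $\ul\sigma^\mu\le\ul\sigma^{-(2+\delta)}$, which holds precisely because $-\mu<1<2+\delta$. Assembling these yields $e_1(\sigma)+e_2(\sigma)\le C_{\mu,c}\,C_2(\sigma)$.

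Finally, since the resulting inequality holds for every $c\in(0,c_0)$, I would set $C_\mu^1:=\min_{c\in(0,c_0)}C_{\mu,c}$, which depends only on $\mu$, and this finishes the proof.

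The only delicate point is the exponent bookkeeping: one must verify that $\alpha-\mu-3$ is exactly $1+\delta$ (so that the $1/\sigma^{\alpha-\mu-3}$ produced by Lemma~\ref{lem:4.3}(b), together with the $1/\ul\sigma$ hidden in $\Theta_2$, yields no worse than $\ul\sigma^{-(2+\delta)}$), and that $-\mu<2+\delta$ so that the pointwise-stability contribution $\ul\sigma^\mu$ is likewise absorbed into $C_2$. Everything else reduces to monotonicity of $C_F$, the inequality $\sigma^{-r}\le\ul\sigma^{-r}$ for $r>0$, and — if one wants to be fully explicit — splitting once more into the cases $\sigma\le 1$ and $\sigma>1$.
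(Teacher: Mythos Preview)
Your proposal is correct and follows essentially the same approach as the paper: apply Proposition~\ref{prop:4.4}, use $\kappa^{\alpha-1}\le\kappa^2$, bound $e_1$ and $e_2$ separately by constants times $C_2(\sigma)$ via the explicit forms of $\Theta_1,\Theta_2$ and the monotonicity of $C_F$, and finally optimize over $c\in(0,c_0)$. The only cosmetic differences are that the paper writes the key exponent identity as $\delta=\alpha-\mu-4$ rather than your equivalent $\alpha-\mu-3=1+\delta$, and it absorbs the $e_2$ contribution through $\ul\sigma^{-1}$ (using $\mu>-1$) instead of your route through $\ul\sigma^{\mu}\le\ul\sigma^{-(2+\delta)}$; both lead to the same bound.
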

\begin{proof}
We first note that
	\begin{align*}
		\kappa^{\alpha -1} \leq \kappa^2,
	\end{align*}
since $\alpha - 1 > \mu + 3 > 2$. Next, recalling the definitions of $\Theta_1$ and $\Theta_2$ (See \eqref{eq:4.3} and \eqref{eq:4.7a}) we write
	\begin{align*}
		\Theta_1 (\sigma) \leq C_F(\tfrac12 \ul \sigma) \frac2 {\ul \sigma},\qquad
		\Theta_2 (\tfrac12 \ul \sigma) \leq \frac{2^{2-\mu}}{\ul \sigma} C_F(\tfrac 14 \ul \sigma).
	\end{align*}
Using these, we bound $e_1$ and $e_2$ (see \eqref{eq:4.29} for their definition) in the following way
	\begin{align*}
		e_1(\sigma) 
		 \leq
		\frac1{\ul \sigma^{\alpha - \mu -2}} C_F(\tfrac 14 \ul \sigma) e^1_\mu\Theta_3 (c) ,
	\qquad
		e_2(\sigma)
		\leq
		\frac1{\ul \sigma} C_F(\tfrac12 \ul\sigma) e^2_\mu \, c^{1-\alpha},
	\end{align*}
with
	\begin{align*}
		e^1_\mu := 2^{3-\mu} \max \left\{ 1, \frac{1}{\alpha - \mu - 4} \right\}, \qquad 
		e^2_\mu :=  \frac{8 \alpha}{\alpha - 1}.
	\end{align*}
Hence, we obtain
	\begin{align*}
		e_1(\sigma) + e_2(\sigma) \leq \frac1{\ul \sigma^{2 + \delta}} C_F(\tfrac 14 \ul \sigma) (e^1_\mu \, \Theta_3 (c) + e^2_\mu \, c^{1-\alpha}),
	\end{align*}
where we used the fact that $\delta = \alpha - \mu - 4$. Note that this inequality holds for any $c \in (0,c_0)$, and we know that the function 
\[
(0,c_0) \ni c \longmapsto e^1_\mu \, \Theta_3 (c) + e^2_\mu \, c^{1-\alpha}
\]
is continuous and bounded below, therefore it attains a minimum value. This implies that
	\begin{align*}
		e_1(\sigma) + e_2(\sigma) \leq \frac1{\ul \sigma^{2 + \delta}} C_F(\tfrac 14 \ul \sigma) \, C_\mu^1,
	\end{align*}
where $C_\mu^1 = \ds \min_{c \, \in (0,c_0)} (e^1_\mu \, \Theta_3 (c) + e^2_\mu \, c^{1-\alpha})$, which finishes the proof.
\end{proof}

\section{Convergence of TRCQ}
This section contains our main convergence result. In order to establish that, we will first talk about Lubich's result tailored for the case $-1 < \mu \leq 0$. One important comment here is that although the result below is based on \cite[Theorem 3.1]{Lubich1994}, it includes the case $\mu = 0$, which was not covered there. Moreover, we are interested in the analysis of TRCQ, whereas Lubich discusses results for CQ in a general way. Our proof makes use of shortcuts and includes more details about the hidden constants.

In this section we will have
	\begin{align} \label{eq:5.1}
		f := \mc L^{-1} \{\mr F \}, \qquad 
		f_\kappa := \mc L^{-1} \{\mr F_\kappa \}.
	\end{align} 

\begin{proposition} \label{prop:5.1}
If $\mr F$ satisfies \eqref{eq:1.1} with $-1<\mu \leq 0$, and $g \in W_+^{\alpha}(\R; X)$ with $\alpha := \lfloor \mu + 5 \rfloor $, then
	\begin{align}\label{eq:5.2}
		\| (f_\kappa - f) * g(t) \|_Y \leq \kappa^2 \, C_\mu^2 \, C_2(t^{-1}) \int_0^t \| g^{(\alpha)} (\tau) \|_X \, \dd \tau
	\end{align}
holds for all $t \geq 0$, where $C_2$ is as in \eqref{eq:4.39}, and $C_\mu^2$ is a constant depending only on $\mu$.
\end{proposition}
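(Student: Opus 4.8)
The plan is to estimate on the Laplace side using Corollary~\ref{cor:4.5} and then localize the time integral by a causal truncation. Both sides vanish at $t=0$ (causality of $f_\kappa-f$ and of $g$), so fix $t>0$. For any $\sigma>0$ the Laplace inversion formula on the line $\Re s=\sigma$ gives
\begin{align*}
\|(f_\kappa-f)*g(t)\|_Y
&\le\frac{e^{\sigma t}}{2\pi}\sup_{\Re s=\sigma}\|s^{\alpha}\mc L\{g\}(s)\|_X\\
&\qquad\times\int_{-\infty}^\infty\|(\mr F_\kappa-\mr F)(\sigma+i\omega)\|_{X\to Y}\,|\sigma+i\omega|^{-\alpha}\,\dd\omega.
\end{align*}
Since $g\in W_+^{\alpha}(\R;X)$, one has $g^{(k)}(0)=0$ for $k\le\alpha-1$, hence $s^{\alpha}\mc L\{g\}(s)=\mc L\{g^{(\alpha)}\}(s)$, so the supremum is at most $\int_0^\infty\|g^{(\alpha)}(\tau)\|_X\,\dd\tau$; the remaining integral is exactly the one bounded in Corollary~\ref{cor:4.5}, whose hypotheses $-1<\mu\le0$ and $\alpha=\lfloor\mu+5\rfloor$ are in force. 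Inserting that bound and choosing $\sigma=t^{-1}$, so that $e^{\sigma t}=e$, yields
\[
\|(f_\kappa-f)*g(t)\|_Y\le\frac{e}{2\pi}\,\kappa^2\,C_\mu^1\,C_2(t^{-1})\int_0^\infty\|g^{(\alpha)}(\tau)\|_X\,\dd\tau.
\]

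It remains to replace $\int_0^\infty$ by $\int_0^t$. For the fixed $t>0$ define
\[
p(\tau):=\left\{
\begin{array}{ll}
g(\tau), & \tau\le t,\\[0.5em]
\ds\sum_{\ell=0}^{\alpha-1}\frac{(\tau-t)^\ell}{\ell!}\,g^{(\ell)}(t), & \tau\ge t.
\end{array}\right.
\]
The Taylor polynomial matches $g$ and its first $\alpha-1$ derivatives at $\tau=t$, so $p\in\mc C^{\alpha-1}(\R;X)$; $p$ is causal because $g$ is; and $p^{(\alpha)}\equiv0$ on $(t,\infty)$ while $p^{(\alpha)}=g^{(\alpha)}$ on $(0,t)$, so $p\in W_+^{\alpha}(\R;X)$ with $\int_0^\infty\|p^{(\alpha)}\|_X=\int_0^t\|g^{(\alpha)}\|_X$. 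Because $f_\kappa-f$ is causal and $p\equiv g$ on $(-\infty,t)$, causality of the convolution gives $(f_\kappa-f)*g(t)=(f_\kappa-f)*p(t)$. Applying the displayed bound to $p$ in place of $g$ produces \eqref{eq:5.2} with $C_\mu^2:=\tfrac{e}{2\pi}C_\mu^1$.

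The two steps requiring a little care are the inversion-formula estimate — routine once one recalls $\|\mc L\{g^{(\alpha)}\}(s)\|_X\le\int_0^\infty\|g^{(\alpha)}(\tau)\|_X\,\dd\tau$ for $\Re s>0$ — and the identity $(f_\kappa-f)*g(t)=(f_\kappa-f)*p(t)$, which is the support (anti-locality) property of causal convolution. I do not expect a genuine obstacle: the real content is Corollary~\ref{cor:4.5}, already established, while what is new here is only the choice $\sigma=t^{-1}$ (which produces the polynomial-in-time, Gronwall-free dependence) and the truncation that confines the data norm to $[0,t]$.
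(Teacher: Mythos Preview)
Your proof is correct and follows essentially the same approach as the paper: Laplace inversion on $\Re s=\sigma$, the bound $\|s^{\alpha}\mc L\{g\}(s)\|_X\le\int_0^\infty\|g^{(\alpha)}\|_X$, Corollary~\ref{cor:4.5}, the choice $\sigma=t^{-1}$, and then the Taylor-truncation $p$ to localize the integral to $[0,t]$. You even obtain the same explicit constant $C_\mu^2=\tfrac{e}{2\pi}C_\mu^1$.
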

\begin{proof}
We start with defining $\mr G := \mc L \{g \}$. Using the inverse Laplace transformation, we have the following estimate for any $\sigma > 0$
	\begin{align}\label{eq:5.4}
		\|  (f_\kappa - f) * g(t) \|_Y 
		&\leq 
		\frac{e^{\sigma t}}{2\pi} \int_{-\infty}^\infty \| \mr F_\kappa(\sigma + i \omega) - \mr F(\sigma + i \omega)\| \, \|\mr G(\sigma + i \omega)\|_X \, \dd \omega \nonumber \\
		& \leq 
		\frac{e^{\sigma t}}{2\pi} \sup_{\Re s = \sigma} \| \mr G(s) s^\alpha \|_X  \!\!
				\int_{-\infty}^\infty \| (\mr F_\kappa- \mr F)(\sigma + i \omega)\| \, | \sigma + i \omega |^{-\alpha} \, \dd \omega.
	\end{align}
We know that for all $\Re s = \sigma$
	\begin{align*}
		\| \mr G(s) s^\alpha \|_X \leq e^{-\sigma t} \int_0^\infty  \| \mc L^{-1} \{\mr G(s) s^\alpha\}  (\tau) \|_X \, \dd \tau \leq \int_0^\infty \| g^{(\alpha)} (\tau) \|_X \, \dd \tau
	\end{align*}
satisfies. Next, using Corollary \ref{cor:4.5}, we obtain
	\begin{align*}
		\|  (f_\kappa - f) * g(t) \|_Y 
		\leq 
		\kappa^2 \frac{e^{\sigma t}}{2\pi} \, C_\mu^1 \, C_2(\sigma) \int_0^\infty \| g^{(\alpha)} (\tau) \|_X \, \dd \tau,
	\end{align*}
 and setting $\sigma = t^{-1}$ gives
	\begin{align}\label{eq:5.6}
		\|  (f_\kappa - f) * g(t) \|_Y 
		\leq  \kappa^2 \, C_\mu^2 \, C_2(t^{-1}) \int_0^\infty \| g^{(\alpha)} (\tau) \|_X \, \dd \tau,
	\end{align}
with $C^2_\mu = \dfrac {e}{2 \pi} C^1_\mu$.
We are very close to obtaining \eqref{eq:5.2}. Now, for a fixed $t >0$, we define the following function
	\begin{align}\label{eq:5.7}
		p(\tau) := \left\{
		\begin{array}{ll}
		g(\tau), & \tau \leq t,\\[0.5em]
		\ds \sum_{\ell = 0}^{\alpha-1} \frac{(\tau - t)^\ell}{\ell !} g^{(\ell)}(t), & \tau \geq t.
		\end{array}\right.
	\end{align}
It is not hard to see that $p \in W^{\alpha}_+(\R;X)$, in other words, satisfies the conditions of the proposition. We also know $p \equiv g$ on $(-\infty,t)$ and $p^{(\alpha)} \equiv 0$ on $(t,\infty)$. 
Therefore we have
	\begin{align}\label{eq:5.8}
		\|  (f_\kappa - f) * g(t) \|_Y = \|  (f_\kappa - f) * p(t) \|_Y
		& \leq 
		 \kappa^2 \, C_\mu^2 \, C_2(t^{-1}) \int_0^\infty \| p^{(\alpha)} (\tau) \|_X \, \dd \tau \nonumber \\
		& = 
		 \kappa^2 \, C_\mu^2 \, C_2(t^{-1}) \int_0^t \| g^{(\alpha)} (\tau) \|_X \, \dd \tau.
	\end{align}
\end{proof}

\begin{theorem}\label{thm:5.3}
Let $\mr F$ be as in \eqref{eq:1.1} with $\mu \geq 0$, 
	\begin{align*}
		m := \ceil \mu, \qquad \alpha := \floor {\mu -m} + 5, \qquad \beta := \max \{2m + 4, m + \alpha\},
	\end{align*}
and $g \in W_+^\beta(\R;X)$. The following holds for all $t>0$
	\begin{align*}
		\| (f_\kappa - f)*g(t) \|_Y 
		\leq
		\kappa^2 C(t^{-1}) \left( \int_0^t \| g^{(m+\alpha)}(\tau) \|_X \, \dd \tau + \int_0^t \| \mc P_m g^{(m+4)} (\tau) \|_X \, \dd \tau \right).
	\end{align*}
Here
	\begin{align*}
		C(\sigma) := C_F(\tfrac14 \ul \sigma) \frac{C_{\mu}}{\ul \sigma^{\varepsilon}},
	\end{align*}
where, $\varepsilon := \max \{2m -\mu + 1, \floor{\mu} - \mu + 3 \}$, and $C_{\mu}$ is a constant depending only on $\mu$.
\end{theorem}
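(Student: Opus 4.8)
The plan is to peel $m$ powers of $s$ off the symbol and move them onto the data, so as to land exactly in the two situations already handled: the Lubich-type estimate of Proposition~\ref{prop:5.1} (valid when the growth exponent lies in $(-1,0]$) and the discrete-versus-exact derivative comparison of Proposition~\ref{prop:3.5}. First I would set, for $s\in\C_+$, $\mr F^m(s):=s^{-m}\mr F(s)$ and $\mr F^m_\kappa(s):=s^{-m}_\kappa\mr F(s_\kappa)=\mr F^m(s_\kappa)$, and write $\mr G:=\mc L\{g\}$ and $\mr H(s):=(s^m_\kappa-s^m)\mr G(s)=:\mc L\{h\}(s)$. Since $\|\mr F^m(s)\|=|s|^{-m}\|\mr F(s)\|\le C_F(\Re s)\,|s|^{\mu-m}$ with $\mu-m=\mu-\ceil\mu\in(-1,0]$ and the \emph{same} $C_F$, the symbol $\mr F^m$ (hence also $\mr F^m_\kappa$) is again the Laplace transform of a causal tempered distribution $f^m$ (resp.\ $f^m_\kappa$). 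The identity $\mr F_\kappa-\mr F=(\mr F^m_\kappa-\mr F^m)\,s^m+\mr F^m_\kappa\,(s^m_\kappa-s^m)$ in $\mc B(X,Y)$, inverted and convolved with $g$, gives $(f_\kappa-f)*g=(f^m_\kappa-f^m)*g^{(m)}+f^m_\kappa*h$, and I would estimate the two summands separately; when $\mu=0$ (so $m=0$) the splitting is trivial, $h\equiv0$, and the statement is just Proposition~\ref{prop:5.1}.

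For the first summand, since $g\in W_+^\beta(\R;X)$ with $\beta\ge m+\alpha$, one checks $g^{(m)}\in W_+^\alpha(\R;X)$ with $\alpha=\floor{(\mu-m)+5}$, and Proposition~\ref{prop:5.1} applied to $\mr F^m$ (growth exponent $\mu-m$) gives
\[
\|(f^m_\kappa-f^m)*g^{(m)}(t)\|_Y\le\kappa^2\,C^2_{\mu-m}\,C_2(t^{-1})\int_0^t\|g^{(m+\alpha)}(\tau)\|_X\,\dd\tau,
\]
where, by Corollary~\ref{cor:4.5}, $C_2(\sigma)=C_F(\tfrac14\ul\sigma)\,\ul\sigma^{-(2+\delta)}$ with $\delta=\floor{\mu-m}-(\mu-m)+1=\floor\mu-\mu+1$; so the exponent produced here is $2+\delta=\floor\mu-\mu+3$.

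For the second summand, since $\beta\ge 2m+4$ we have $g\in W_+^{2m+4}(\R;X)$ and $h=(\partial_t^\kappa)^m g-g^{(m)}$ in the notation of Proposition~\ref{prop:3.5}; applying that proposition with kernel $f^m_\kappa$ (which, like $f_\kappa$, is the inverse transform of a causal tempered distribution) gives
\[
\|f^m_\kappa*h(t)\|_Y\le\kappa^2\,C_1(t^{-1})\sup_{\Re s=t^{-1}}\|\mr F^m_\kappa(s)\|\int_0^t\|\mc P_m g^{(m+4)}(\tau)\|_X\,\dd\tau,
\]
with $C_1(\sigma)=C_m\,\sigma^{-1}\ul\sigma^{-m}$. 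Proposition~\ref{prop:4.1} applied to $\mr F^m$ (exponent $\mu-m\le0$) bounds $\sup_{\Re s=\sigma}\|\mr F^m_\kappa(s)\|\le C_F(\tfrac12\ul\sigma)(\tfrac12\ul\sigma)^{\mu-m}$, so
\[
C_1(\sigma)\sup_{\Re s=\sigma}\|\mr F^m_\kappa(s)\|\le C_F(\tfrac12\ul\sigma)\,\frac{C_m\,2^{m-\mu}}{\sigma\,\ul\sigma^{2m-\mu}}\le C_F(\tfrac14\ul\sigma)\,\frac{C_m\,2^{m-\mu}}{\ul\sigma^{\,2m-\mu+1}},
\]
using $2m-\mu=m+(m-\mu)\ge0$, $\sigma\ge\ul\sigma$, and that $C_F$ is non-increasing. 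Finally, each summand is bounded by $\kappa^2\,C_F(\tfrac14\ul{t^{-1}})$ times a constant depending only on $\mu$ times $\ul{t^{-1}}^{-e}$ with $e\in\{\floor\mu-\mu+3,\ 2m-\mu+1\}$ times the respective time integral; since $\ul\sigma\le1$ each $\ul\sigma^{-e}\le\ul\sigma^{-\varepsilon}$ for $\varepsilon=\max\{2m-\mu+1,\floor\mu-\mu+3\}$, and adding gives the asserted bound with $C(\sigma)=C_F(\tfrac14\ul\sigma)\,C_\mu\,\ul\sigma^{-\varepsilon}$.

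The substance is not conceptual but clerical: I expect the main effort to be the constant/exponent accounting of the last step — checking that the $\delta$ coming from Proposition~\ref{prop:5.1} after substituting $\mu-m$ for $\mu$ collapses to $\floor\mu-\mu+1$, that the two exponents $\floor\mu-\mu+3$ and $2m-\mu+1$ are simultaneously absorbed by the single envelope $\ul\sigma^{-\varepsilon}$ (using only monotonicity of $C_F$ and of $\sigma\mapsto\min\{1,\sigma\}$, together with $\sigma\ge\ul\sigma$), and that the two data-norm integrals produced are exactly the two appearing in the statement. Verifying $g^{(m)}\in W_+^\alpha$ and $g\in W_+^{2m+4}$ from $g\in W_+^\beta$, and that $h$ is a legitimate causal tempered distribution, is routine.
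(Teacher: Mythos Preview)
Your proposal is correct and follows essentially the same approach as the paper: the decomposition $(f_\kappa-f)*g=(f^m_\kappa-f^m)*g^{(m)}+f^m_\kappa*h$ obtained by peeling off $m$ powers of $s$, followed by Proposition~\ref{prop:5.1} on the first summand and Proposition~\ref{prop:3.5} (with Proposition~\ref{prop:4.1} to bound $\|\mr F^m_\kappa\|$) on the second, is exactly what the paper does. Your exponent bookkeeping---in particular the observation that $\delta=\floor{\mu-m}-(\mu-m)+1=\floor\mu-\mu+1$ and the absorption of $\sigma^{-1}\ul\sigma^{-(2m-\mu)}$ into $\ul\sigma^{-(2m-\mu+1)}$ via $\sigma\ge\ul\sigma$---matches the paper's as well.
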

\noindent\textbf{Note.} A few comments on $\mu,m,\alpha,\beta$ and $\varepsilon$:
\begin{itemize}
\item
$\mu -m \in (-1,0]$.
\item
$\alpha = \left\{\begin{array}{ll}
5, & \mu =m,\\
4,& \mu \neq m.
\end{array}\right.$
\item
$\beta = \left\{\begin{array}{ll}
5, & \mu = 0,\\
2m + 4,& \mu > 0.
\end{array}\right.$
\item
$\max\{2, m+1\}  \leq  \varepsilon  \leq  \max\{3, m+2\}$.
\end{itemize}

\begin{proof}
The following notation will be used:
\begin{subequations}
	\begin{alignat*}{6}
		& \mr F^m (s) := s^{-m} \mr F(s) = \mc L \{ f^m\}(s), \qquad && f^m = \partial^{-m}_t f,\\
		& \mr F^m_\kappa (s) := s_\kappa^{-m} \mr F(s_\kappa) = \mc L \{ f^m_\kappa\}(s), \qquad && f^m_\kappa = (\partial^\kappa_t)^{-m} f.
	\end{alignat*}
\end{subequations}
We start with writing
	\begin{align*}
		f * g = f^m * g^{(m)}, \qquad f _\kappa * g = f_\kappa^m * (\partial^\kappa_t)^{m}g,
	\end{align*}
and defining 
	\begin{align*}
		h := ((\partial^\kappa_t)^{m}g - g^{(m)}),
	\end{align*}
which gives us
	\begin{align} \label{eq:5.17}
		f_\kappa * g - f * g = (f_\kappa^m - f^m) * g^{(m)} + f_\kappa^m * h.
	\end{align}
Now, we are going to work on $(f_\kappa^m - f^m) * g^{(m)}$ and $f_\kappa^m * h$ separately.

For the first one, since we have 
	\begin{align}\label{eq:5.24}
		\| \mr F^m \| \leq C_F(\Re s) |s|^{\mu-m} \qquad \forall s \in \C_+,
	\end{align} 
with $-1< \mu-m \leq 0$, and $g^{(m)} \in W^{\alpha}_+(\R;X)$, we can apply Proposition \ref{prop:5.1} to obtain
	\begin{align}\label{eq:5.27}
		\| (f_\kappa^m - f^m) * g^{(m)} (t) \|_Y \leq  \kappa^2 \, C_{\mu-m}^2 \, C_2(t^{-1}) \int_0^t \| g^{(m+\alpha)} (\tau) \|_X \, \dd \tau,
	\end{align}
where $C^2_{\mu-m}$ corresponds to the constant $C^2_\mu$ in \eqref{eq:5.2}. Also recall that
	\begin{align*}
		C_2(\sigma) = C_F(\tfrac14 \ul\sigma) \frac {1}{\ul\sigma^{2 + \delta}},
	\end{align*}
with $\delta =  \floor{\mu - m} - (\mu - m) + 1 = \floor\mu - \mu + 1$.

Next, we bound $f_\kappa^m * h$ using Proposition \ref{prop:3.5} in the following way
\begin{equation}\label{eq:5.28}
 \| (f_\kappa^m * h) (t) \|_Y
 \leq 
 \kappa^2 C_1(t^{-1}) \sup_{\Re s = t^{-1}}  \| \mr F^m_\kappa (s) \| \int_0^t  \| \mc P_m g^{(m+4)} (\tau) \|_X \mr d \tau.
\end{equation}
Here, using the definition of $C_1$ together with Proposition \ref{prop:4.1}, we have
\begin{align*}
	C_1(\sigma) \sup_{\Re s = \sigma}  \| \mr F^m_\kappa (s) \|
	\leq
	\frac{C_m}{\sigma \ul \sigma^m}  C_F(\tfrac12 \ul \sigma) 2^{-\mu + m} \ul\sigma^{\mu-m}
	=
	C_F(\tfrac12 \ul \sigma) \frac{C^3_\mu}{ \sigma \ul \sigma^{2m - \mu}}
\end{align*}
for all $\sigma > 0$, where $C^3_\mu := C_m \, 2^{-\mu + m}$. Combining this with \eqref{eq:5.28}, and then \eqref{eq:5.27} we obtain
\[
\| (f_\kappa - f)*g(t) \|_Y 
		\leq
		C(t^{-1}) \left( \int_0^t \| g^{(m+\alpha)}(\tau) \|_X \, \dd \tau + \int_0^t \| \mc P_m g^{(m+4)} (\tau) \|_X \, \dd \tau \right).
\]
Here we used the fact that
\[
	\max \{ C_{\mu-m}^2 C_F(\tfrac14 \ul \sigma)  \frac1{\ul\sigma^{2+\delta}}, C_\mu^3 \, C_F(\tfrac12 \ul \sigma) \frac1{\sigma \ul \sigma^{2m - \mu}} \} 
		\leq 
		C_F(\tfrac14 \ul \sigma) \frac{C_{\mu} }{\ul \sigma^\varepsilon}
		=
		C(\sigma),
\]
where $C_\mu = \max\{ C^2_{\mu - m}, C^3_\mu \}$ and $\varepsilon = \max\{ 2 + \delta, 2m - \mu +1 \}$. This finishes the proof.
\end{proof}

\end{document}